\documentclass[a4paper]{article}

\usepackage{amsfonts}
\usepackage{graphicx}
\usepackage{amsthm}
\usepackage{amsmath}
\usepackage{amssymb}
\usepackage{enumerate}
\allowdisplaybreaks

\setlength{\textheight}{195mm}
\setlength{\textwidth}{125mm}

\newtheorem{proposition}{Proposition}[section]
\newtheorem{lemma}[proposition]{Lemma}
\newtheorem{corollary}[proposition]{Corollary}

\newtheorem{definition}[proposition]{Definition}
\theoremstyle{definition}
\newtheorem{example}[proposition]{Example}
\numberwithin{equation}{section}

\begin{document}

\begin{center}
\LARGE
\textbf{Notes on well-distributed minimal sub-BIBDs for $\lambda=1$}
\bigskip\bigskip

\large
Daniele Dona\footnote{The author was partially supported by the European Research Council under Programme H2020-EU.1.1., ERC Grant ID: 648329 (codename GRANT).}
\bigskip

\normalsize
Mathematisches Institut, Georg-August-Universit\"at G\"ottingen

Bunsenstra\ss e 3-5, 37073 G\"ottingen, Germany

\texttt{daniele.dona@mathematik.uni-goettingen.de}
\bigskip\bigskip\bigskip
\end{center}

\begin{minipage}{110mm}
\small
\textbf{Abstract.} In these notes we investigate BIBDs with $\lambda=1$ that present subdesigns evenly covering both blocks and vertices: we determine some of their basic properties, consequence of already existing results in the literature, with regards to their size and the number of intersections of pairs and triples of subdesigns of a specific kind. We also describe the link between these particular BIBDs and the graph isomorphism problem, based on Babai's paper \cite{Ba15}, and point out the characteristics of these designs that would lead to improvements of the algorithm for the GIP.
\medskip

\textbf{Keywords.} BIBD, subdesigns, graph isomorphism problem
\medskip

\textbf{MSC2010.} 05B05, 05C60, 51E10.
\end{minipage}
\bigskip

\section{Introduction}

An important branch of combinatorics is \textit{design theory}, which investigates pairs $(V,\mathcal{B})$ with $V$ a finite set and $\mathcal{B}$ a collection of subsets of $V$ satisfying some generic ``nice'' properties with regards to inclusions and intersections: the umbrella term usually employed to describe such pairs is \textit{combinatorial designs}, or simply \textit{designs}. The aforementioned properties can be of many different kinds, giving birth to various designs, of interest for different reasons in disparate fields, such as statistics and geometry; for dozens of examples of designs we refer the reader to Stinson \cite{St04} and Colbourn-Dinitz \cite{CD06}.

A very well-studied class of designs is the class of \textit{balanced incomplete block designs}: they have been investigated for a long time (Fisher's now basic result \cite{Fi40} is from the 1940s) but they are still a rich source of unanswered questions (conjectures about their very existence are being solved only in the last years, see for example Keevash \cite{Ke14} \cite{Ke18}).

In this paper we concentrate on substructures of balanced incomplete block designs, and in particular on a subclass of such designs in which these substructures are especially well-behaved (see Definition~\ref{dewdmb}). Our interest in the problem lies in the connection between these designs and the graph isomorphism problem: block designs are featured in a recent paper by Babai \cite{Ba15} on the proof of the existence of an algorithm that solves the problem in quasipolynomial time (see also Helfgott \cite{HBD17}), and the study of these structures is a likely path to even further improvements in the matter.

In the first section we give definitions and simple properties of designs and subdesigns that are needed in later discussions. In the second section, BIBDs with well-distributed minimal sub-BIBDs are defined and their structural properties (size, intersection numbers of pairs and triples of subdesigns) are investigated using already existing results. In the third section, we describe in more detail the relation between these objects and the graph isomorphism problem in light of Babai's proof.

\subsection{Balanced incomplete block designs}

\begin{definition}\label{debibd}
A balanced incomplete block design (BIBD) is a pair $(V,\mathcal{B})$, where $V$ is a finite set of elements (called vertices) with $|V|=v$ and $\mathcal{B}$ is a collection of subsets of $V$ (called blocks) of size $k>1$ with $|\mathcal{B}|=b$, such that:
\begin{enumerate}[(a)]
\item\label{debibd1} every vertex of $V$ belongs to the same number of blocks $r$;
\item\label{debibd2} every pair of vertices of $V$ is contained in the same number of blocks $\lambda$.
\end{enumerate}
\end{definition}

Condition (\ref{debibd1}) is actually a consequence of the rest of the definition, as shown in Lemma~\ref{lebibdeasy}\ref{lebibdeasy1}, so it is sometimes omitted.

When it is desired to make the parameters explicit, the expressions ``$(v,k,\lambda)$-BIBD'' (as in \cite[Def. 1.2]{St04}) , ``BIBD$(v,b,r,k,\lambda)$'' (as in \cite[Prop. II.1.2]{CD06}), ``$(v,k,\lambda)$-design'' (as in \cite{Li06} \cite[Rem. II.1.5]{CD06}) are used in the literature; we will use the first of the list, to emphasize what kind of design we are talking about and to employ only the necessary parameters in the definition, since $r$ and $b$ depend on the other ones (Lemma~\ref{lebibdeasy}\ref{lebibdeasy2}-\ref{lebibdeasy}\ref{lebibdeasy3}).

There exist some trivial examples of BIBDs: $(V,\emptyset)$ is of course a $(v,k,0)$-BIBD for any choice of $v,k$, and $(V,\{V\})$ is a $(v,v,1)$-BIBD for any $v$; unless explicitly stated, in this paper we ignore these trivial examples. We also mention that in the literature, especially in papers with a more statistical approach, sometimes $\mathcal{B}$ is allowed to count blocks with multiplicity, i.e. there could be distinct blocks $B_{1},B_{2}$ that contain the exact same vertices; again, we suppose that all our blocks are simple unless we expressly state otherwise. Finally, the case $\mathcal{B}=\mathcal{P}_{k}(V)$ (i.e. all subsets of size $k$ are blocks) easily gives a BIBD with $\lambda=\binom{v-2}{k-2}$: we consider this to be trivial, and as such we ignore it; notice that for $k=2$ this is also the only possible nonempty BIBD, so our choice completely rules out the case $k=2$ from now on.

\begin{lemma}\label{lebibdeasy}
Let $(V,\mathcal{B})$ be a $(v,k,\lambda)$-BIBD. Then:
\begin{enumerate}[(a)]
\item\label{lebibdeasy1} condition (\ref{debibd1}) in Definition~\ref{debibd} is redundant, i.e. the fact that every pair of vertices belongs to $\lambda$ blocks of the same size $k$ implies that every vertex belongs to the same number $r$ of blocks;
\item\label{lebibdeasy2} $r=\lambda\frac{v-1}{k-1}$;
\item\label{lebibdeasy3} $b=\lambda\frac{v(v-1)}{k(k-1)}$.
\end{enumerate}
\end{lemma}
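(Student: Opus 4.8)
The plan is to prove all three parts by double counting incidences, handling (a) and (b) together and then deducing (c) from them.

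For parts (a) and (b), I would fix an arbitrary vertex $x \in V$ and let $r_{x}$ denote the number of blocks containing $x$; the goal is to show that $r_{x}$ does not depend on $x$ and equals $\lambda\frac{v-1}{k-1}$. The idea is to count the set of pairs $(y,B)$ with $y \in V \setminus \{x\}$, $B \in \mathcal{B}$, and $\{x,y\} \subseteq B$, in two ways. Counting by blocks, each of the $r_{x}$ blocks through $x$ contributes exactly $k-1$ admissible choices of $y$ (the remaining vertices of that block), for a total of $r_{x}(k-1)$. Counting by vertices, each of the $v-1$ vertices $y \neq x$ forms with $x$ a pair contained in exactly $\lambda$ blocks by condition (\ref{debibd2}), for a total of $(v-1)\lambda$. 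Equating the two counts gives $r_{x}(k-1) = (v-1)\lambda$, hence $r_{x} = \lambda\frac{v-1}{k-1}$, a quantity manifestly independent of $x$. This simultaneously establishes the redundancy of condition (\ref{debibd1}) (part (a)) and the explicit formula for $r$ (part (b)).

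For part (c), I would count the incidence pairs $(x,B)$ with $x \in B$ in two ways. Summing over blocks gives $bk$, since every block has exactly $k$ vertices; summing over vertices gives $vr$, since every vertex lies in exactly $r$ blocks by part (b). Hence $bk = vr$, and substituting the value of $r$ just obtained yields $b = \frac{vr}{k} = \lambda\frac{v(v-1)}{k(k-1)}$.

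There is no genuine obstacle here: everything follows from two applications of double counting, and the only point requiring care is to keep the fixed vertex $x$ clearly separated from the summed variable in the first count, so that the independence of $r_{x}$ from $x$ is visible and part (a) is actually proved rather than assumed. I would present the argument in exactly the order above, since the count in part (c) relies on the value of $r$ produced in part (b).
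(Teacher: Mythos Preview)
Your proposal is correct and follows essentially the same approach as the paper: both parts (a)--(b) are handled by double counting pairs $(y,B)$ with $y\neq x$ and $\{x,y\}\subseteq B$ for a fixed vertex $x$, and part (c) by double counting incidences $(x,B)$ with $x\in B$. The only cosmetic difference is that the paper phrases the first count as counting pairs $(\{x,y\},B)$, which amounts to the same thing since $x$ is fixed.
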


\begin{proof}
(\ref{lebibdeasy1}) Fix a vertex $x\in V$. We count in two different ways the number of pairs $(\{x,y\},B)$ such that $y\in V\setminus\{x\}$ and $B\in\mathcal{B}$ is a block containing $x,y$: on one hand, for every choice of $y$ there are $\lambda$ possible $B$; on the other hand, for every choice of $B$ containing $x$ there are $k-1$ possible $y$. If $r_{x}$ is the number of $B$ containing $x$, then we have $\lambda(v-1)=r_{x}(k-1)$: since $v,k,\lambda$ are given parameters independent from $x$, so is $r_{x}$.

(\ref{lebibdeasy2}) From the reasoning above we have $\lambda(v-1)=r(k-1)$, hence $r=\lambda\frac{v-1}{k-1}$.

(\ref{lebibdeasy3}) We count in two different ways the number of pairs $(x,B)$ such that $x\in V$ and $B\in\mathcal{B}$ is a block containing $x$: on one hand, for every choice of $x$ we have $r$ possible $B$; on the other hand, for every choice of $B$ there are $k$ possible $x$. So $rv=bk$, which means that $b=\frac{rv}{k}=\lambda\frac{v(v-1)}{k(k-1)}$.
\end{proof}

There are other types of combinatorial designs that generalize the definition of BIBD and that are of interest to us in the next sections; since we will make multiple references to a couple of them, we now introduce some more definitions.

\begin{definition}\label{depbd}
A pairwise balanced design (PBD) is a pair $(V,\mathcal{B})$, where $V$ is a finite set of elements (called vertices) with $|V|=v$ and $\mathcal{B}$ is a collection of subsets of $V$ (called blocks) with $|\mathcal{B}|=b$, such that:
\begin{enumerate}[(a)]
\item\label{depbd1} every block of $V$ has size $k$ for some $k$ inside a set $K\subseteq\{n\in\mathbb{N}|n\geq 2\}$;
\item\label{depbd2} every pair of vertices of $V$ is contained in the same number of blocks $\lambda$.
\end{enumerate}
\end{definition}

It is important to notice that in some papers the definition actually requires $\lambda$ to be equal to $1$ (see for example \cite{BD16}).

When it is desired to make the parameters explicit, the expressions ``$(v,K,\lambda)$-PBD'' (as in \cite[Def. 7.1]{St04}), ``$\lambda$-PBD'' (as in \cite{Wi75}), ``PBD$(v,K,\lambda)$'', ``$(K,\lambda)$-PBD'' (both quoted in \cite[Def. IV.1.1]{CD06}) are used in the literature; again, we will use the first of the list. When $\lambda=1$, the notations ``$(v,K)$-PBD'' (as in \cite[Def. 7.1]{St04}), ``PBD$(v,K)$'', ``$K$-PBD'' (both quoted in \cite[Def. IV.1.1]{CD06}) are also employed. A BIBD is a particular case of PBD, i.e. the case $|K|=1$ (a $(v,k,\lambda)$-BIBD is the same as a $(v,\{k\},\lambda)$-PBD).

\begin{definition}
A $t$-design is a pair $(V,\mathcal{B})$, where $V$ is a finite set of elements (called vertices) with $|V|=v$ and $\mathcal{B}$ is a collection of subsets of $V$ (called blocks) of size $k$ with $|\mathcal{B}|=b$, such that every subset of $V$ of size $t$ is contained in the same number of blocks $\lambda$.
\end{definition}

When it is desired to make the parameters explicit, the expressions ``$t$-$(v,k,\lambda)$-design'' (as in \cite[Def. 9.1]{St04} \cite{HBD17} \cite[Def. II.4.1]{CD06}) , ``$S_{\lambda}(t,v,k)$'' (as in \cite{RCW75} \cite[Def. II.4.2]{CD06}) are used in the literature; we will use the former. A $(v,k,\lambda)$-BIBD is the same as a $2$-$(v,k,\lambda)$-design; every $t$-$(v,k,\lambda)$-design is also a $t'$-$(v,k,\lambda')$-design for all $t'\leq t$ and some new $\lambda'$ (see \cite[Ex. B.14b]{HBD17}), so every $t$-design with $t\geq 2$ is in particular a BIBD. Lemma~\ref{lebibdeasy}\ref{lebibdeasy1} can be seen also as a consequence of this fact and of the inequality $2\geq 1$.

We also mention a generalization of PBDs and $t$-designs, called \textit{$t$-wise balanced designs} (tBD): as the name suggests, the definition is similar to Definition~\ref{depbd} for PBDs, but condition (\ref{depbd2}) holds for subsets of $V$ of size $t$ instead of pairs (see \cite[Def. 9.33]{St04} \cite[Def. VI.63.1]{CD06}). Clearly, PBDs are tBDs with $t=2$ and $t$-designs are tBDs with $|K|=1$; when in need of making explicit reference to the parameters, we will write ``$t$-$(v,k,\lambda)$-tBD''.

For PBDs, $t$-designs and tBDs we again ignore trivial or extreme cases and we do not allow for repeated blocks unless stated otherwise.

We make the following simple observation.

\begin{lemma}\label{leintblock}
Let $V$ be a $t$-$(v,k,\lambda)$-design with $t\geq 2$ and let $B_{0}\in\mathcal{B}$. Consider the collection $\mathcal{B}'=\{B_{0}\cap B|B\in\mathcal{B}\setminus\{B_{0}\},|B_{0}\cap B|\geq 2\}$ of subsets of $B_{0}$: then $(B_{0},\mathcal{B}')$ is a $t$-$(k,K,\lambda-1)$-tBD for some $K\subseteq\{2,3,\ldots,k-1\}$, possibly trivial and/or with repeated blocks.
\end{lemma}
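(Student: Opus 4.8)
The plan is to verify the defining properties of a $t$-wise balanced design directly from the definition of a $t$-design, the crucial ingredient being the balance condition of $(V,\mathcal{B})$ applied to the $t$-subsets lying inside $B_{0}$. First I would record the two easy facts about block sizes. Any block of $\mathcal{B}'$ has the form $B_{0}\cap B$ with $B\in\mathcal{B}\setminus\{B_{0}\}$ and $|B_{0}\cap B|\geq 2$ by construction, so its size is at least $2$; on the other hand, since $B\neq B_{0}$ and $|B|=|B_{0}|=k$, we cannot have $B_{0}\subseteq B$, whence $B_{0}\cap B\subsetneq B_{0}$ and its size is at most $k-1$. This shows that all block sizes lie in some $K\subseteq\{2,3,\ldots,k-1\}$, as required, and incidentally that $B_{0}$ itself never appears as a block of $\mathcal{B}'$.

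The heart of the argument is the balance condition, which I would establish by a direct double-counting, taking care to treat $\mathcal{B}'$ as a multiset. Fix any $t$-subset $T\subseteq B_{0}$. Because $(V,\mathcal{B})$ is a $t$-$(v,k,\lambda)$-design, $T$ is contained in exactly $\lambda$ blocks of $\mathcal{B}$; one of these is $B_{0}$ itself (as $T\subseteq B_{0}$), so precisely $\lambda-1$ blocks $B\in\mathcal{B}\setminus\{B_{0}\}$ satisfy $T\subseteq B$. For each such $B$ we have $T\subseteq B_{0}\cap B$, so in particular $|B_{0}\cap B|\geq|T|=t\geq 2$, and therefore $B_{0}\cap B$ is genuinely a block of $\mathcal{B}'$ containing $T$.

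The point that needs the most care---and the reason the statement explicitly permits repeated blocks---is the converse matching. I would observe that, since $T\subseteq B_{0}$ already, the condition $T\subseteq B_{0}\cap B$ is equivalent to $T\subseteq B$. Hence the blocks of $\mathcal{B}'$ containing $T$, counted with the multiplicity with which they arise from distinct $B$, are in bijection with the blocks $B\in\mathcal{B}\setminus\{B_{0}\}$ containing $T$, of which there are exactly $\lambda-1$. Thus every $t$-subset of $B_{0}$ lies in exactly $\lambda-1$ blocks of $\mathcal{B}'$, which is precisely the defining balance property of a $t$-$(k,K,\lambda-1)$-tBD. The main (mild) obstacle is purely bookkeeping: distinct blocks $B$ may yield the same intersection $B_{0}\cap B$, so the count is correct only when $\mathcal{B}'$ is regarded as a multiset; this also accounts for the ``possibly trivial'' proviso, since $\lambda=1$ forces $\mathcal{B}'=\emptyset$.
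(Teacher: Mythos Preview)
Your proof is correct and follows the same approach as the paper's: fix a $t$-subset $T\subseteq B_{0}$, use the $t$-design property to find exactly $\lambda$ blocks through $T$, discard $B_{0}$ itself, and observe that the remaining $\lambda-1$ intersections with $B_{0}$ are precisely the blocks of $\mathcal{B}'$ (as a multiset) through $T$. Your write-up is in fact more thorough than the paper's, since you explicitly verify the block-size constraint $K\subseteq\{2,\ldots,k-1\}$ and spell out the bijection (and the need for multiplicities) that the paper leaves implicit.
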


\begin{proof}
Consider any $t$-tuple of vertices of $B$: since we are inside a $t$-$(v,k,\lambda)$-design, there are $\lambda$ blocks in $V$ that contain the tuple; one of them is $B_{0}$ and the other ones are intersecting $B_{0}$ in at least $t$ vertices, so that their intersections with $B_{0}$ are indeed elements of $\mathcal{B}'$. Therefore the definition of tBD is satisfied with $\lambda-1$.
\end{proof}

The proviso ``possibly trivial and/or with repeated blocks'' is evidently necessary: for example if $\lambda=1$ then blocks intersect only in at most $1$ vertex, so every design obtained in this way would have $\mathcal{B}'=\emptyset$, and in general it is also clear that we might have two distinct blocks $B_{1},B_{2}$ such that $B_{0}\cap B_{1}=B_{0}\cap B_{2}$, thus compelling us to consider multiplicity in the new blocks so as not to lose the condition on $\lambda-1$. It is also to be noted that a $t$-design yields in this way a tBD and not just a smaller $t$-design: instersections of two blocks can have different sizes in a generic $t$-design, so we lose that property in the process.

We recall a well-known result.

\begin{proposition}\label{prfisher}
Every $t$-design with $v\geq k+\lfloor\frac{t}{2}\rfloor$ has $b\geq\binom{v}{\lfloor\frac{t}{2}\rfloor}$; in particular, every BIBD has $b\geq v$ and $v\geq\frac{k(k-1)}{\lambda}+1$. Call a BIBD symmetric if the equality is realized (any of the two): then all the pairwise intersections of blocks have the same size.
\end{proposition}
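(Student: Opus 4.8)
The plan is to derive every bound by a rank argument applied to an inclusion matrix, and then to treat the symmetric case separately by exploiting squareness. Write $s=\lfloor t/2\rfloor$, and recall (as noted in the excerpt) that a $t$-design is also an $i$-design for every $i\le t$, so the number $\lambda_{i}$ of blocks through a fixed $i$-subset is well-defined and, for a nontrivial design, positive for all $i\le t$. Let $W$ be the $\binom{v}{s}\times b$ inclusion matrix, with rows indexed by the $s$-subsets $S\subseteq V$, columns indexed by the blocks, and $W_{S,B}=1$ exactly when $S\subseteq B$. Since $\operatorname{rank}W\le b$ trivially, the inequality $b\ge\binom{v}{s}$ follows at once once I show that $W$ has full row rank $\binom{v}{s}$, and for that it suffices to prove that the Gram matrix $WW^{T}$ is nonsingular.

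The point of taking $s=\lfloor t/2\rfloor$ is that $WW^{T}$ becomes computable. Indeed $(WW^{T})_{S,S'}$ counts the blocks containing $S\cup S'$; since $S,S'$ are $s$-subsets we have $s\le|S\cup S'|\le 2s\le t$, so this number equals $\lambda_{|S\cup S'|}=\lambda_{2s-|S\cap S'|}$ and depends only on $|S\cap S'|$. Hence $WW^{T}=\sum_{j=0}^{s}\lambda_{2s-j}A_{j}$, where $A_{j}$ records the pairs of $s$-subsets meeting in exactly $j$ points. The matrices $A_{0},\dots,A_{s}$ span the Bose--Mesner algebra of the Johnson scheme $J(v,s)$, which is metric with respect to the Johnson graph $A_{1}$; consequently $A_{1}$ has $s+1$ distinct eigenvalues $\theta_{i}=(s-i)(v-s-i)-i$ (for $0\le i\le s$) and generates the algebra, so $WW^{T}=q(A_{1})$ for a single polynomial $q$ of degree at most $s$, with eigenvalues $q(\theta_{i})$. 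Being a Gram matrix, $WW^{T}$ is positive semidefinite, so it is nonsingular precisely when none of the $q(\theta_{i})$ vanishes. Verifying this is the one genuinely technical point, and I expect it to be the main obstacle: it is exactly where the hypothesis $v\ge k+s$ enters, and it is the content of the Ray-Chaudhuri--Wilson generalized Fisher inequality, which I would either reprove by evaluating the $q(\theta_{i})$ explicitly or simply cite.

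For a BIBD, that is $t=2$ and $s=1$, everything collapses to the classical argument, which I would carry out by hand for concreteness. Here $W$ is the ordinary $v\times b$ incidence matrix $N$, and $NN^{T}=(r-\lambda)I+\lambda J$. Using $\lambda(v-1)=r(k-1)$ from Lemma~\ref{lebibdeasy}\ref{lebibdeasy2}, the eigenvalues of $NN^{T}$ are $rk$ on the all-ones vector and $r-\lambda=\lambda\frac{v-k}{k-1}$ on its orthogonal complement; both are strictly positive because $v>k$ (we have excluded $k=v$), so $NN^{T}$ is nonsingular, $\operatorname{rank}N=v$, and $b\ge v$. The equivalent form $v\ge\frac{k(k-1)}{\lambda}+1$ is then immediate: by Lemma~\ref{lebibdeasy}\ref{lebibdeasy3} we have $b=\lambda\frac{v(v-1)}{k(k-1)}$, and $b\ge v$ rearranges to $\lambda(v-1)\ge k(k-1)$, i.e.\ to the asserted bound on $v$.

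Finally, suppose a BIBD is symmetric, i.e.\ one of the two equalities holds. The two are equivalent, since each forces $r=k$ (for the second this is again $\lambda(v-1)=r(k-1)$), and with $rv=bk$ this forces $b=v$; thus $N$ is square, and nonsingular by the previous paragraph. To identify the intersection sizes I would compute $N^{T}N$, whose off-diagonal entry at $(B,B')$ is exactly $|B\cap B'|$. From $N\mathbf{1}=k\mathbf{1}$ I get $N^{-1}\mathbf{1}=\tfrac1k\mathbf{1}$, hence $N^{-1}J=\tfrac1kJ$; and from $JN=kJ$ (each block has $k$ vertices) I get $N^{-1}JN=J$. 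Conjugating $NN^{T}=(k-\lambda)I+\lambda J$ therefore yields $N^{T}N=N^{-1}(NN^{T})N=(k-\lambda)I+\lambda J$, so every off-diagonal entry equals $\lambda$. Hence all pairwise block intersections have the common size $\lambda$, which completes the proof.
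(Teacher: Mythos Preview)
Your argument is correct and is precisely the Ray-Chaudhuri--Wilson/Fisher linear-algebra proof; the paper does not actually give a proof but simply cites \cite[Thm.~1 and Thm.~4]{RCW75} and \cite{Fi40}, so you have supplied the content behind those citations rather than taken a different route. One small slip worth fixing: with your convention that $A_{j}$ records pairs of $s$-subsets meeting in exactly $j$ points, the Johnson graph (the distance-$1$ relation) is $A_{s-1}$, not $A_{1}$, and the eigenvalues $\theta_{i}=(s-i)(v-s-i)-i$ belong to that matrix; this does not affect the argument, since either matrix generates the Bose--Mesner algebra.
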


\begin{proof}
See \cite[Thm. 1]{RCW75}; in particular, the case of a BIBD was solved by Fisher \cite{Fi40}. By Lemma~\ref{lebibdeasy}\ref{lebibdeasy3} and Fisher's result we have $v\geq\frac{k(k-1)}{\lambda}+1$, and equality holds if and only if $b=v$. The statement about block intersections is a special case of \cite[Thm. 4]{RCW75} (when $t=2$).
\end{proof}

In our study of BIBDs, we will focus mostly on the case $\lambda=1$: a $(v,k,1)$-BIBD is also called a \textit{Steiner system}, an expression used especially in the case $k=3$, in which they are called \textit{Steiner triple systems} (see for example \cite[\S 6.2]{St04} \cite[Def. II.2.1]{CD06}). We clarify that the term ``Steiner system'' is used for different classes of designs, either for $t$-$(v,t+1,1)$-designs (as in Steiner's original formulation \cite{St53}) or for $t$-$(v,k,1)$-designs (as it is more common nowadays, see \cite{Ke14} \cite[Def. II.5.1]{CD06}): for the sake of brevity, when we talk about a Steiner system we mean ``a Steiner system in the second broader meaning with $t=2$''.

In order to be possible for a Steiner system with parameters $v,k$ to exist, we must have $k-1|v-1$ by Lemma~\ref{lebibdeasy}\ref{lebibdeasy2} and $k(k-1)|v(v-1)$ by Lemma~\ref{lebibdeasy}\ref{lebibdeasy3}; more generally, necessary  divisibility conditions (also called \textit{admissibility} conditions) for the existence of a PBD with given $v,K$ are that $\lambda(v-1)$ be divisible by $\gcd\{k-1|k\in K\}$ and $\lambda v(v-1)$ be divisible by $\gcd\{k(k-1)|k\in K\}$, as proved by Wilson \cite[Prop. 2.2]{Wi72}. The question of whether these admissibility conditions are sufficient for the existence of a Steiner system or a PBD has positive answer for $v$ large enough, as proved in \cite[Thm. 1]{Wi75} (recent generalizations of this result include \cite{Ke14} \cite{GKLO17} \cite{Ke18}).

\subsection{Sub-BIBDs}

Inside a BIBD, and in general inside a design, it is possible to find a substructure (i.e. a subset of the vertices and a subset of the set of blocks) that shares the same properties of the original larger design.

\begin{definition}
Let $(V,\mathcal{B})$ be a $(v,k,\lambda)$-BIBD. A sub-BIBD is a pair $(V',\mathcal{B}')$ such that $V'\subseteq V$, $\mathcal{B}'\subseteq\mathcal{B}\cap\mathcal{P}(V')$ and $(V',\mathcal{B}')$ is a BIBD; its parameters satisfy $v'\leq v$, $k'=k$, $\lambda'\leq\lambda$.
\end{definition}

The term ``sub-BIBD'', occurring for example in \cite{HMR82} \cite{RS89}, is often replaced in the literature by the vaguer term ``subdesign'', as in \cite[Def. I.1.3]{CD06}, although ``subdesign'' can be applied to substructures of other kinds of designs, as in \cite[\S 9.3.1]{St04}. The term ``flat'' is also employed, especially in the context of subdesigns of PBDs (as in \cite{BD16} \cite[Def. IV.1.30]{CD06}) and when the sets $V$ and blocks $B$ are finite geometric spaces and subspaces (as in \cite[\S 5.2.3 and \S 9.3]{St04}).

We observe also that the notion of subdesign of a PBD is closely linked to the concept of \textit{hole} as in the definition of \textit{incomplete t-wise balanced design} (ItBD) provided in \cite[Def. 9.36]{St04}: in practice, an ItBD is a tBD deprived of a subdesign, leaving a subset $V'\subseteq V$ that has no blocks entirely contained in it, which is called the hole. Clearly, theorems about the structure of holes are also theorems about subdesigns and vice versa; the following lemma for example is stated in terms of holes in the referenced source.

\begin{lemma}[\cite{St04}, Thm. 9.43]\label{lesubboundv}
Let $(V,\mathcal{B})$ be a $(v,k,1)$-BIBD with a sub-BIBD of size $v'$. Then $v\geq (k-1)v'+1$.
\end{lemma}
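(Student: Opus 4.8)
The plan is to fix a single vertex outside the subdesign and count the blocks through it, exploiting the defining feature of $\lambda=1$ that any two vertices determine a \emph{unique} block. First I would dispose of the degenerate cases: a sub-BIBD with $\lambda'=0$ is the trivial empty design, which we ignore, so $(V',\mathcal{B}')$ is itself a $(v',k,1)$-BIBD; moreover, if $V'=V$ then $b'=\lambda'\frac{v(v-1)}{k(k-1)}=b$ by Lemma~\ref{lebibdeasy}\ref{lebibdeasy3} forces $\mathcal{B}'=\mathcal{B}$, i.e. the subdesign is the whole design, which is again trivial. Hence I may assume $V\setminus V'\neq\emptyset$.

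The crucial structural observation is the following: any block $B\in\mathcal{B}$ that meets $V'$ in at least two vertices is entirely contained in $V'$. Indeed, if $y_{1},y_{2}\in B\cap V'$ with $y_{1}\neq y_{2}$, then since $\mathcal{B}'$ is a $(v',k,1)$-BIBD there is a block $B'\in\mathcal{B}'\subseteq\mathcal{B}$ with $\{y_{1},y_{2}\}\subseteq B'\subseteq V'$; but $\lambda=1$ means $\{y_{1},y_{2}\}$ lies in a unique block of $\mathcal{B}$, so $B=B'\subseteq V'$.

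Now I would fix $x\in V\setminus V'$ and consider, for each $y\in V'$, the unique block $B_{x,y}\in\mathcal{B}$ containing $\{x,y\}$. Each $B_{x,y}$ contains $x\notin V'$, so by the observation it cannot contain two points of $V'$; since it already contains $y$, it meets $V'$ exactly in $\{y\}$. Consequently the assignment $y\mapsto B_{x,y}$ is injective, and it takes values among the blocks through $x$. By Lemma~\ref{lebibdeasy}\ref{lebibdeasy2} with $\lambda=1$ the number of blocks through $x$ equals $r=\frac{v-1}{k-1}$, whence $v'\leq\frac{v-1}{k-1}$, which rearranges to the claimed bound $v\geq(k-1)v'+1$.

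The argument is short, and the point to get right is the structural observation that two points of $V'$ pull their whole block into $V'$; everything after that is the standard replacement-number count. I expect no real obstacle beyond making the degenerate cases ($\lambda'=0$ and $V'=V$) explicit, so that a vertex $x$ outside $V'$ is guaranteed to exist.
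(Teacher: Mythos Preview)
Your proof is correct and follows essentially the same approach as the paper: fix $x\notin V'$, observe that the blocks $B_{x,y}$ for $y\in V'$ are pairwise distinct because any block meeting $V'$ twice lies in $V'$, and then count. The only cosmetic difference is the final step---you bound the number of these blocks by the replication number $r=\frac{v-1}{k-1}$, whereas the paper instead counts vertices directly (the $v'$ blocks are pairwise disjoint off $x$ and each contributes $k-2$ vertices outside $V'\cup\{x\}$, giving $v\geq v'+1+(k-2)v'$); both routes land on the same inequality.
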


\begin{proof}
We provide a simpler proof than the one given in \cite[Thm. 9.43]{St04}. Let $V'$ be the set of vertices of the sub-BIBD, fix any vertex $x\not\in V'$ and consider the collection of blocks $B\in\mathcal{B}$ that contain both $x$ and at least one vertex inside $V'$. First, we prove that there are exactly $v'$ of them: since we are in a Steiner system, for any pair $\{x,y\}$ there is exactly one block containing such a pair; moreover, for every pair of distinct vertices $y,y'\in V'$ the two blocks found in this way are also distinct: if they were not, we would have a block containing $x,y,y'$, and this is not possible because there is a unique block containing both $y$ and $y'$ and it has to sit entirely inside $V'$, since $V'$ is a sub-BIBD.

Now that we have proved that there are $v'$ such blocks, the result is easy: all of them intersect in $x$, so they do not pairwise intersect in any other vertex, or we would have a pair of vertices contained in two different blocks in contradiction with the fact that $\lambda=1$; each of them also intersects $V'$ only in one vertex, as we have already pointed out. Therefore for each such block we have $k-2$ vertices outside $V'\cup\{x\}$ contained in it and not shared by any other of these blocks, which means that:
\begin{equation*}
v\geq v'+1+(k-2)v'=(k-1)v'+1
\end{equation*}
as we wanted.
\end{proof}

Sub-BIBDs are not at all rare: it is possible to give recursive constructions of larger BIBDs starting from smaller ones, which lead naturally to the presence of sub-BIBDs. The Doyen-Wilson theorem for example (see \cite{DW73}) states that for any pair of admissible $v'<v$ and any $(v',3,1)$-BIBD there is a $(v,3,1)$-BIBD containing the previous one as sub-BIBD; here with ``admissible'' $v,v'$ we are referring to the divisibility conditions for Steiner triple systems coming from Lemma~\ref{lebibdeasy}, namely $v,v'\equiv 1,3\pmod 6$, and to the bound on sizes coming from Lemma~\ref{lesubboundv}, namely $v\geq 2v'+1$. For examples involving other types of BIBD, see \cite[\S II.7.2]{CD06}.

To give an idea of the frequency of sub-BIBDs, we report here a little computation. The smallest $v$ for which a $(v,k,1)$-BIBD can have proper sub-BIBDs is $(k(k-1)+1)(k-1)+1=k(k^{2}-2k+2)$ (combining Proposition~\ref{prfisher} and Lemma~\ref{lesubboundv}): for $k=3$ this corresponds to $v=15$. There are $80$ non-isomorphic Steiner triple systems with $v=15$ (\cite{CCW17}; for a classification and properties see \cite{MPR83}): of these $80$, $23$ have proper sub-BIBDs; we have verified it using the explicit list provided in a GAP package \cite{NV16} based on Colbourn-Rosa \cite{CR99}. Further direct computations clash with the fast growth of the number $N(v)$ of non-isomorphic Steiner triple systems: for $v=19$ there are 11084874829 such designs, as proven in \cite{KO04}, and in general $(ve^{-5})^{\frac{v^{2}}{6}}\leq N(v)\leq(ve^{-1/2})^{\frac{v^{2}}{6}}$, as proven in \cite{Wi74}. We also checked the BIBDs contained in the Sage package \texttt{sage.combinat.designs.bibd} (v7.6) for $4\leq k\leq 10$ and $k(k^{2}-2k+2)\leq v\leq 1000$: this package constructs $(v,4,1)$-BIBDs following a technique described in \cite[\S 7.4]{St04} and constructs $(v,5,1)$-BIBDs following \cite{Sm04}; out of the $335$ BIBDs thus found, $281$ present proper sub-BIBDs (unevenly distributed among the various $k$: this bias is most likely consequence of the different construction methods in the package rather than reflective of an actual asymmetry in the frequency of subdesigns).

We make another simple observation about sub-BIBDs.

\begin{lemma}\label{leintersbibd}
Let $(V,\mathcal{B})$ be a $(v,k,1)$-BIBD with two sub-BIBDs $(V_{1},\mathcal{B}_{1})$ and $(V_{2},\mathcal{B}_{2})$. Then their intersection $(V_{1}\cap V_{2},\mathcal{B}_{1}\cap\mathcal{B}_{2})$ is also a sub-BIBD: in particular, if we call $|V_{1}\cap V_{2}|=v'$, $|\mathcal{B}_{1}\cap\mathcal{B}_{2}|=b'$, it is either one of the three trivial sub-BIBDs given by $(v',b')=(0,0)$, $(v',b')=(1,0)$, $(v',b')=(k,1)$, or it is nontrivial with $v'>k$.
\end{lemma}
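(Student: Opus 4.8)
The plan is to exploit the defining property $\lambda=1$ of the ambient design: every pair of vertices lies in exactly one block of $\mathcal{B}$. First I would dispose of the easy containments. Any block $B\in\mathcal{B}_{1}\cap\mathcal{B}_{2}$ lies in $\mathcal{P}(V_{1})\cap\mathcal{P}(V_{2})=\mathcal{P}(V_{1}\cap V_{2})$ and in $\mathcal{B}$, so $\mathcal{B}_{1}\cap\mathcal{B}_{2}\subseteq\mathcal{B}\cap\mathcal{P}(V_{1}\cap V_{2})$ and in particular every such block sits entirely inside $V_{1}\cap V_{2}$; this is exactly what is demanded of the block set of a sub-BIBD, so it only remains to verify the pair-balance condition on $V_{1}\cap V_{2}$.

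The heart of the argument is to show that $(V_{1}\cap V_{2},\mathcal{B}_{1}\cap\mathcal{B}_{2})$ is a genuine $(v',k,1)$-BIBD whenever $v'\geq 2$. Following the standing convention that sub-BIBDs are nontrivial, each of $(V_{1},\mathcal{B}_{1})$ and $(V_{2},\mathcal{B}_{2})$ has $\lambda=1$ (forced by $\lambda'\leq\lambda=1$), hence is a Steiner subsystem. I would take any pair $\{x,y\}\subseteq V_{1}\cap V_{2}$: since $(V_{1},\mathcal{B}_{1})$ is a Steiner system there is a unique block $B_{1}\in\mathcal{B}_{1}$ through $\{x,y\}$, and likewise a unique $B_{2}\in\mathcal{B}_{2}$. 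But $B_{1},B_{2}\in\mathcal{B}$ both contain $\{x,y\}$, so $\lambda=1$ forces $B_{1}=B_{2}$; this common block lies in $\mathcal{B}_{1}\cap\mathcal{B}_{2}$, is contained in $V_{1}\cap V_{2}$, and (being the unique block of $\mathcal{B}\supseteq\mathcal{B}_{1}\cap\mathcal{B}_{2}$ through the pair) is the only block of $\mathcal{B}_{1}\cap\mathcal{B}_{2}$ through $\{x,y\}$. Thus every pair of $V_{1}\cap V_{2}$ is covered exactly once and the intersection is a sub-BIBD. This coincidence-of-blocks step is where all the work lies, and it is precisely the point at which $\lambda=1$ is indispensable: for larger $\lambda$ the blocks of the two subsystems through a pair need not coincide, and the intersection could fail to be balanced.

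Finally I would read off the admissible sizes by a short case analysis on $v'=|V_{1}\cap V_{2}|$. For $v'=0$ or $v'=1$ there are no pairs, hence $b'=0$, giving the trivial designs $(0,0)$ and $(1,0)$. For $v'\geq 2$ I pick any pair; by the previous paragraph its block lies inside $V_{1}\cap V_{2}$ and has size $k$, so $v'\geq k$, which excludes the range $2\leq v'<k$ entirely. If $v'=k$ that block must equal $V_{1}\cap V_{2}$, and since any block of $\mathcal{B}_{1}\cap\mathcal{B}_{2}$ has size $k$ and lies in a set of size $k$ there can be no other, giving $(k,1)$; and if $v'>k$ the intersection is a nontrivial Steiner subsystem. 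This yields exactly the claimed trichotomy of trivial cases together with the nontrivial possibility $v'>k$. The only delicate point in this last step is ruling out extra blocks when $v'=k$, which the size constraint settles immediately.
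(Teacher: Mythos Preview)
Your argument is correct and follows essentially the same route as the paper: both use $\lambda=1$ in the ambient design to force the unique block through a pair $\{x,y\}\subseteq V_{1}\cap V_{2}$ to lie in $\mathcal{B}_{1}\cap\mathcal{B}_{2}$, and then perform the same case split on $v'$. Your version is in fact more explicit than the paper's, spelling out why the blocks from the two subsystems coincide and why $\lambda'=1$ in each sub-BIBD, whereas the paper compresses this into a single ``by definition'' clause.
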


\begin{proof}
Any block in $\mathcal{B}_{1}\cap\mathcal{B}_{2}$ is entirely contained in $V_{1}\cap V_{2}$, so in particular for every pair of vertices $x,y\in V_{1}\cap V_{2}$ by definition we have a unique block $B\in\mathcal{B}_{1}\cap\mathcal{B}_{2}$ containing $x,y$; so $(V_{1}\cap V_{2},\mathcal{B}_{1}\cap\mathcal{B}_{2})$ is again a sub-BIBD.

If $v'=0,1$ the triviality is clear; if $v'\geq 2$ then the sub-BIBD contains at least one block, so $v'\geq k$: if $v'=k$ we have exactly one block taking up the whole sub-BIBD, and the triviality is clear again, while if $v'>k$ (and $\lambda=1$ clearly) our sub-BIBD is nontrivial.
\end{proof}

Thanks to Lemma~\ref{leintersbibd}, we can talk about the sub-BIBD generated by a tuple of vertices of $V$: it will be in particular the intersection of all sub-BIBDs containing those vertices. Alternatively, we can construct it through the following process: consider every pair of vertices of the tuple we are provided with, for each such pair there will be a block in $V$ that contains it; we add those blocks to the collection of blocks of the generated sub-BIBD and every new vertex contained in the blocks to the collection of vertices of the generated sub-BIBD: if while doing this we have added new vertices we repeat this step, until we reach an iteration in which we do not add any other vertex. It is important to notice that a tuple of vertices may generate a trivial sub-BIBD, namely one consisting of only one block.

\section{Well-distributed minimal sub-BIBDs}

As we have already said, for every sufficiently large admissible $v$ there is a $(v,k,1)$-BIBD; also, for every sufficiently large admissible $v'$ there is a $(v,k,1)$-BIBD containing a $(v',k,1)$-BIBD as sub-BIBD. It is possible to do even better and prove that there is a BIBD where subdesigns are very frequent.

\begin{proposition}[\cite{BD16}, Thm. 1.3]\label{prcoversubbibd}
Let $d\in\mathbb{N}$ and $K\subseteq\{n\in\mathbb{N}|n\geq 2\}$. Then there exists a constant $f(d,K)$ such that, for every sufficiently large admissible $v$, there exists a $(v,K,1)$-PBD such that every $d$ vertices generate a subdesign of size at most $f(d,K)$.
\end{proposition}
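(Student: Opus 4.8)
The plan is to locate the required designs among highly structured, ``geometric'' PBDs, where the subdesign generated by a bounded set of vertices is forced to remain inside a bounded ``flat,'' and then to transport this behaviour to arbitrary $K$ and arbitrary large admissible $v$ through the recursive constructions that underlie Wilson's existence theorem. First I would make the growth of the generation process quantitative. Recall from the discussion following Lemma~\ref{leintersbibd} that the subdesign generated by a tuple is built by repeatedly enlarging the current vertex set $C$ to the union of $C$ with all blocks meeting $C$ in at least two vertices. Writing $C_{0}$ for the initial $d$ vertices and $C_{i+1}$ for the outcome of one enlargement step, the facts that each block contributes at most $\max K-2$ vertices outside $C_{i}$, and that the blocks meeting $C_{i}$ in at least two vertices cover disjoint pairs of $C_{i}$ (since $\lambda=1$) and so number at most $\binom{|C_{i}|}{2}$, give
\begin{equation*}
|C_{i+1}|\le |C_{i}|+\binom{|C_{i}|}{2}(\max K-2).
\end{equation*}
A single step is thus at worst polynomial in $|C_{i}|$; the entire difficulty is that a priori the number of steps needed to reach a closed set may grow with $v$. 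Bounding the final size by a constant $f(d,K)$ is therefore equivalent to ensuring that the process \emph{stabilises after a number of steps that does not depend on $v$}, i.e. that after boundedly many rounds the current set already contains all blocks internal to it.

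This stabilisation is genuinely a structural demand rather than a generic one: in a ``random'' Steiner system the block through a pair of freshly created vertices will typically acquire a new third vertex, so the process keeps producing new points and the generated subdesign grows with $v$. The prototype in which stabilisation is automatic is the finite projective space $PG(n,q)$ (or affine space $AG(n,q)$), a $(v,q+1,1)$-BIBD with $v=\frac{q^{n+1}-1}{q-1}$: here the subdesign generated by $d$ vertices is exactly the projective subspace they span, whose dimension is at most $d-1$ and whose size is therefore at most $\frac{q^{d}-1}{q-1}$, a bound independent of $n$. Letting $n\to\infty$ already proves the proposition for $K=\{q+1\}$ with $q$ a prime power, along the corresponding infinite sequence of $v$. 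The mechanism to be exported is that each vertex carries ``coordinates,'' so that a bounded set of vertices involves only boundedly many coordinates and hence lies in a bounded sub-PBD.

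For a general $K$ and every large admissible $v$ I would build coordinatised designs with the same feature out of bounded ``ingredient'' PBDs by means of the recursive machinery behind Wilson's theorem \cite{Wi75} (transversal designs and the fundamental construction, see also \cite[\S II.7.2]{CD06}): one fixes a bounded $(w,K,1)$-PBD, whose existence for some admissible $w$ is guaranteed by \cite{Wi75}, and assembles copies of it along a grid so that every pair of vertices --- and more generally every $d$-tuple --- is confined to a bounded number of copies, keeping each closure inside a bounded union of ingredient flats. Combining this with the per-step bound above and with Fisher's inequality (Proposition~\ref{prfisher}) to control the sizes of the flats that appear, one obtains a constant $f(d,K)$ valid for all the $v$ produced by the construction. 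The hard part will be precisely the reconciliation of the two competing requirements: the blocks must cover every pair exactly once and realise an arbitrary admissible $v$ (an exact-completion problem, ultimately resting on \cite{Wi75} or the stronger \cite{Ke14}), while simultaneously no block may ``leak'' out of the bounded local flats, since a single badly placed block restarts the generation process and can, as the random case shows, inflate a closure without bound. Ensuring that the completion respects the coordinate structure --- so that stabilisation after boundedly many rounds survives the passage to an exact PBD --- is the crux of the argument.
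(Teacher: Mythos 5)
The paper does not actually prove this proposition: it is imported verbatim from \cite[Thm.~1.3]{BD16}, and the only information the paper supplies about the argument is the remark immediately afterwards that \cite{BD16} proceeds via Wilson's fundamental construction \cite{Wi75b}, inflating a group divisible design by vertex weights and filling the inflated groups and blocks with small ingredient designs, with the resulting constant $f(d,K)$ made explicit in \cite[\S 3]{BD16} in terms of a GDD existence bound from \cite{Li06} (and \cite{Ch76} for $|K|=1$). Your sketch correctly identifies exactly this strategy --- bounded ingredient PBDs assembled along a grid so that every $d$-tuple is confined to boundedly many flats --- so in outline you are on the same road as the actual source.

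What you have written, however, is a plan rather than a proof, and the gap is the one you yourself flag at the end. The per-step bound $|C_{i+1}|\le |C_{i}|+\binom{|C_{i}|}{2}(\max K-2)$ and the $PG(n,q)$ prototype are correct but easy; the entire content of the theorem is a construction that simultaneously (i) realises \emph{every} sufficiently large admissible $v$ exactly and (ii) guarantees that the closure of any $d$ vertices stabilises inside a bounded union of ingredient flats. You defer precisely this to ``the recursive machinery behind Wilson's theorem'' without specifying the GDD, the choice of weights, how groups and inflated blocks are filled, or --- crucially --- why no block of the completed design joins flats in a way that lets a closure propagate across unboundedly many of them; your own remark about random Steiner systems shows that this failure mode is real and that the bound cannot be obtained by genericity. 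Until that step is carried out there is no value of $f(d,K)$ and no proof; one would have to reproduce the analysis of \cite[\S 3]{BD16}.
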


Since the constant in the proposition does not depend on $v$, by taking $v$ large enough we obtain a PBD (or a Steiner system, if we choose to work with $K=\{k\}$) with an arbitrarily large number of subdesigns that are small in comparison and that are ubiquitous in the original PBD, as every $d$ vertices are contained in one of them. The constant $f(d,K)$ can be made explicit: see \cite[\S 3]{BD16}, where the authors give an expression for $f(d,K)$ involving another constant (called $b$ there, which clashes with our definitions) describing how large an admissible $v$ must be such that a GDD with given $K$ exists; this $b$ in turn is given in \cite{Li06}, with the case $|K|=1$ (the most interesting to us) already considered in \cite{Ch76}.

We observe that $f(d,K)$ is an upper bound on the size of the subdesigns but that they need not be all of the same size: the process described in \cite{BD16} involves among other things the use of the so-called ``Wilson's fundamental construction'' (see \cite{Wi75b}), whose effect is to inflate parts of another kind of design (a \textit{group divisible design}, GDD) according to the different weights given to the vertices; heterogeneous weights lead to a heterogeneous inflation, so that in the end the resulting subdesigns will have different sizes. Hence Proposition~\ref{prcoversubbibd} gives us BIBDs with a lot of sub-BIBDs, but does not ensure any sort of ``regularity'' of distribution of the subdesigns in any way; the rest of the notes will deal with the following case: BIBDs with a lot of sub-BIBDs that are evenly distributed inside the original design.

\subsection{Well-distributed minimal sub-BIBDs: definition and examples}

\begin{definition}\label{dewdmb}
Let $V$ be a BIBD with $\lambda=1$ and let $V'$ be a sub-BIBD. $V'$ is minimal if and only if it has minimal size among all the sub-BIBDs present in $V$ (with $v'>k$).

$V$ has well-distributed minimal sub-BIBDs if and only if:
\begin{enumerate}[(a)]
\item\label{dewdmb1} every vertex of $V$ is contained inside $l$ minimal sub-BIBDs;
\item\label{dewdmb2} every block of $V$ is contained inside $m$ minimal sub-BIBDs.
\end{enumerate}
If $V$ has well-distributed minimal sub-BIBDs, we call $\mathcal{D}$ the collection of such subdesigns and we set $|\mathcal{D}|=n$.
\end{definition}

The elements of $\mathcal{D}$ can be seen as either sets of vertices or sets of blocks, depending on the context.

There actually exist BIBDs satisfying the conditions in Definition~\ref{dewdmb}.

\begin{example}\label{extheone3}
Let $V=\{0,1,2,\ldots,14\}$ and define:
\small
\begin{eqnarray*}
\mathcal{B} & = & \{\{0, 1, 2\}, \{0, 3, 4\}, \{0, 5, 6\}, \{0, 7, 8\}, \{0, 9, 10\}, \{0, 11, 12\}, \{0, 13, 14\}, \\
 & & \{1, 3, 5\}, \{1, 4, 6\}, \{1, 7, 9\}, \{1, 8, 10\}, \{1, 11, 13\}, \{1, 12, 14\}, \{2, 3, 6\}, \\
 & & \{2, 4, 5\}, \{2, 7, 10\}, \{2, 8, 9\}, \{2, 11, 14\}, \{2, 12, 13\}, \{3, 7, 11\}, \{3, 8, 12\}, \\
 & & \{3, 9, 13\}, \{3, 10, 14\}, \{4, 7, 12\}, \{4, 8, 11\}, \{4, 9, 14\}, \{4, 10, 13\}, \{5, 7, 13\}, \\
 & & \{5, 8, 14\}, \{5, 9, 11\}, \{5, 10, 12\}, \{6, 7, 14\}, \{6, 8, 13\}, \{6, 9, 12\}, \{6, 10, 11\}\}
\end{eqnarray*}
\normalsize
Then $(V,\mathcal{B})$ is a $(15,3,1)$-BIBD with $b=35$, $r=7$ that has well-distributed minimal $(7,3,1)$-sub-BIBDs with $n=15$, $l=7$, $m=3$. Of the $23$ non-isomorphic $(15,3,1)$-BIBDs that have subdesigns, $2$ have all blocks covered by $(7,3,1)$-sub-BIBDs and the one in this example is the only one that satisfies Definition~\ref{dewdmb} (i.e. where the subdesigns cover the blocks evenly); this is not the $(15,3,1)$-BIBD contained in the Sage package mentioned in the previous section.
\end{example}

\begin{example}\label{extheone4}
Let $V=\{0,1,2,\ldots,39\}$ and define:
\small
\begin{eqnarray*}
\mathcal{B} & = & \{\{0, 1, 2, 12\}, \{0, 3, 6, 9\}, \{0, 4, 8, 10\}, \{0, 5, 7, 11\}, \{0, 13, 26, 39\}, \\
 & & \{0, 14, 25, 28\}, \{0, 15, 27, 38\}, \{0, 16, 22, 32\}, \{0, 17, 23, 34\}, \{0, 18, 24, 33\}, \\
 & & \{0, 19, 29, 35\}, \{0, 20, 31, 37\}, \{0, 21, 30, 36\}, \{1, 3, 8, 11\}, \{1, 4, 7, 9\}, \\
 & & \{1, 5, 6, 10\}, \{1, 13, 28, 38\}, \{1, 14, 27, 39\}, \{1, 15, 25, 26\}, \{1, 16, 24, 34\}, \\
 & & \{1, 17, 22, 33\}, \{1, 18, 23, 32\}, \{1, 19, 31, 36\}, \{1, 20, 30, 35\}, \{1, 21, 29, 37\}, \\
 & & \{2, 3, 7, 10\}, \{2, 4, 6, 11\}, \{2, 5, 8, 9\}, \{2, 13, 25, 27\}, \{2, 14, 26, 38\}, \\
 & & \{2, 15, 28, 39\}, \{2, 16, 23, 33\}, \{2, 17, 24, 32\}, \{2, 18, 22, 34\}, \{2, 19, 30, 37\}, \\
 & & \{2, 20, 29, 36\}, \{2, 21, 31, 35\}, \{3, 4, 5, 12\}, \{3, 13, 32, 35\}, \{3, 14, 34, 37\}, \\
 & & \{3, 15, 33, 36\}, \{3, 16, 29, 39\}, \{3, 17, 25, 31\}, \{3, 18, 30, 38\}, \{3, 19, 22, 26\}, \\
 & & \{3, 20, 23, 28\}, \{3, 21, 24, 27\}, \{4, 13, 34, 36\}, \{4, 14, 33, 35\}, \{4, 15, 32, 37\}, \\
 & & \{4, 16, 31, 38\}, \{4, 17, 30, 39\}, \{4, 18, 25, 29\}, \{4, 19, 24, 28\}, \{4, 20, 22, 27\}, \\
 & & \{4, 21, 23, 26\}, \{5, 13, 33, 37\}, \{5, 14, 32, 36\}, \{5, 15, 34, 35\}, \{5, 16, 25, 30\}, \\
 & & \{5, 17, 29, 38\}, \{5, 18, 31, 39\}, \{5, 19, 23, 27\}, \{5, 20, 24, 26\}, \{5, 21, 22, 28\}, \\
 & & \{6, 7, 8, 12\}, \{6, 13, 22, 29\}, \{6, 14, 23, 31\}, \{6, 15, 24, 30\}, \{6, 16, 26, 35\}, \\
 & & \{6, 17, 28, 37\}, \{6, 18, 27, 36\}, \{6, 19, 32, 39\}, \{6, 20, 25, 34\}, \{6, 21, 33, 38\}, \\
 & & \{7, 13, 24, 31\}, \{7, 14, 22, 30\}, \{7, 15, 23, 29\}, \{7, 16, 28, 36\}, \{7, 17, 27, 35\}, \\
 & & \{7, 18, 26, 37\}, \{7, 19, 34, 38\}, \{7, 20, 33, 39\}, \{7, 21, 25, 32\}, \{8, 13, 23, 30\}, \\
 & & \{8, 14, 24, 29\}, \{8, 15, 22, 31\}, \{8, 16, 27, 37\}, \{8, 17, 26, 36\}, \{8, 18, 28, 35\}, \\
 & & \{8, 19, 25, 33\}, \{8, 20, 32, 38\}, \{8, 21, 34, 39\}, \{9, 10, 11, 12\}, \{9, 13, 16, 19\}, \\
 & & \{9, 14, 17, 20\}, \{9, 15, 18, 21\}, \{9, 22, 35, 39\}, \{9, 23, 25, 37\}, \{9, 24, 36, 38\}, \\
 & & \{9, 26, 29, 32\}, \{9, 27, 30, 33\}, \{9, 28, 31, 34\}, \{10, 13, 17, 21\}, \{10, 14, 18, 19\}, \\
 & & \{10, 15, 16, 20\}, \{10, 22, 37, 38\}, \{10, 23, 36, 39\}, \{10, 24, 25, 35\}, \{10, 26, 30, 34\}, \\
 & & \{10, 27, 31, 32\}, \{10, 28, 29, 33\}, \{11, 13, 18, 20\}, \{11, 14, 16, 21\}, \{11, 15, 17, 19\}, \\
 & & \{11, 22, 25, 36\}, \{11, 23, 35, 38\}, \{11, 24, 37, 39\}, \{11, 26, 31, 33\}, \{11, 27, 29, 34\}, \\
 & & \{11, 28, 30, 32\}, \{12, 13, 14, 15\}, \{12, 16, 17, 18\}, \{12, 19, 20, 21\}, \{12, 22, 23, 24\}, \\
 & & \{12, 25, 38, 39\}, \{12, 26, 27, 28\}, \{12, 29, 30, 31\}, \{12, 32, 33, 34\}, \{12, 35, 36, 37\}\}
\end{eqnarray*}
\normalsize
Then $(V,\mathcal{B})$ is a $(40,4,1)$-BIBD with $b=130$, $r=13$ that has well-distributed minimal $(13,4,1)$-sub-BIBDs with $n=40$, $l=13$, $m=4$. This is the $(40,4,1)$-BIBD contained in the Sage package mentioned in the previous section; by direct computation, it is the only BIBD satisfying Definition~\ref{dewdmb} among those contained in the package with $k\geq 3$ and $k(k^{2}-2k+2)\leq v'\leq 1000$.
\end{example}

Thanks to our definition, BIBDs with well-distributed minimal sub-BIBDs exhibit interesting properties.

\begin{proposition}\label{prwdmbbibd}
Let $(V,\mathcal{B})$ be a $(v,k,1)$-BIBD with a collection $\mathcal{D}$ of well-distributed minimal $(v',k,1)$-sub-BIBDs. Then:
\begin{enumerate}[(a)]
\item\label{prwdmbbibd1} $(V,\mathcal{D})$ is a $(v,v',m)$-BIBD, where:
\begin{itemize}
\item for any $2\leq i\leq m$, any intersection of $i$ blocks is of size $0,1,k$,
\item for any $m<i\leq l$, any intersection of $i$ blocks is of size $0,1$,
\item for any $i>l$, any $i$ blocks have empty intersection;
\end{itemize}
\item\label{prwdmbbibd2} $(\mathcal{B},\mathcal{D})$ is a $1$-$(b,b',m)$-design, where:
\begin{itemize}
\item for any $2\leq i\leq m$, any intersection of $i$ blocks is of size $0,1$,
\item for any $i>m$, any $i$ blocks have empty intersection.
\end{itemize}
\end{enumerate}
\end{proposition}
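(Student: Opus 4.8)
The plan is to treat the two parts in turn, both resting on the ambient hypothesis $\lambda=1$ together with Lemma~\ref{leintersbibd} and the minimality of the subdesigns in $\mathcal{D}$. For part~\ref{prwdmbbibd1}, I would first record that every $D\in\mathcal{D}$, viewed as a set of vertices, has exactly $v'$ elements, so all ``blocks'' of $(V,\mathcal{D})$ share the common size $v'$; moreover a minimal sub-BIBD is determined by its vertex set (since $\lambda=1$ forces its block set to be precisely the blocks of $\mathcal{B}$ lying inside that vertex set), so $\mathcal{D}$ has no repeated blocks. The crux is to count the $D\in\mathcal{D}$ through a fixed pair $\{x,y\}\subseteq V$. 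Because $\lambda=1$, there is a unique $B\in\mathcal{B}$ with $\{x,y\}\subseteq B$, and I claim $\{x,y\}\subseteq D$ if and only if $B\in D$: indeed if $x,y\in D$ then the unique block of $D$ on $\{x,y\}$ is also the unique block of $\mathcal{B}$ on $\{x,y\}$, namely $B$, so $B\in D$; conversely $B\in D$ gives $x,y\in B\subseteq D$. Hence the pair $\{x,y\}$ lies in as many subdesigns as the block $B$, which is $m$ by Definition~\ref{dewdmb}\ref{dewdmb2}. Every pair being covered exactly $m$ times, $(V,\mathcal{D})$ is a $(v,v',m)$-BIBD, and its replication number equals $l$ by Definition~\ref{dewdmb}\ref{dewdmb1}, consistently with Lemma~\ref{lebibdeasy}\ref{lebibdeasy1}.

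For the intersection sizes in part~\ref{prwdmbbibd1}, I would take $i$ distinct subdesigns $D_{1},\ldots,D_{i}$ and apply Lemma~\ref{leintersbibd} inductively: $D_{1}\cap D_{2}$ is a sub-BIBD, intersecting it with $D_{3}$ gives another sub-BIBD, and so on, so the $i$-fold intersection is itself a sub-BIBD. Since the $D_{j}$ are distinct of size $v'$, this intersection sits inside $D_{1}\cap D_{2}\subsetneq D_{1}$ and so has fewer than $v'$ vertices; by minimality no sub-BIBD has size strictly between $k$ and $v'$, so the intersection is trivial, of size $0,1,k$. To locate the thresholds, note that an intersection of size $k$ is a single common block, which lies in at most $m$ subdesigns by Definition~\ref{dewdmb}\ref{dewdmb2}, so size $k$ is impossible once $i>m$; likewise an intersection of size $\geq 1$ is a common vertex, lying in at most $l$ subdesigns by Definition~\ref{dewdmb}\ref{dewdmb1}, so the intersection is empty once $i>l$. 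This yields the three stated regimes (and in particular $m\leq l$, as $m$ subdesigns sharing a block share its $k\geq 2$ vertices).

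Part~\ref{prwdmbbibd2} is then almost immediate. Regarding $\mathcal{B}$ as the point set and each $D\in\mathcal{D}$ as its collection of $b'$ blocks, the number of $D$ containing a fixed point $B\in\mathcal{B}$ is exactly $m$ by Definition~\ref{dewdmb}\ref{dewdmb2}, which is precisely the defining property of a $1$-$(b,b',m)$-design; distinctness of the block sets again follows from the vertex-set description. For the intersection numbers I would invoke Lemma~\ref{leintersbibd} once more: by that lemma the intersection of two distinct subdesigns is a trivial sub-BIBD, hence contains at most one block, so any two subdesigns share $0$ or $1$ blocks; this bound descends to every $i$-fold intersection, and since a common block forces $i\leq m$ exactly as above, the intersection is empty as soon as $i>m$.

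The main obstacle I anticipate is the correspondence in part~\ref{prwdmbbibd1} between a pair of vertices and the block it determines: this is where $\lambda=1$ is used essentially and where the a priori unrelated parameters $l$ and $m$ become linked, converting condition~\ref{dewdmb2} on blocks into the pair-balance needed for a BIBD. The remaining work---keeping the iterated intersections within the scope of Lemma~\ref{leintersbibd} and reading off the thresholds from the two covering numbers---is routine once minimality is invoked to rule out any nontrivial intersection.
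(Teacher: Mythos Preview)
Your proposal is correct and follows essentially the same approach as the paper: you reduce the pair-balance condition to the block-covering parameter $m$ via the $\lambda=1$ bijection between pairs and blocks, then use Lemma~\ref{leintersbibd} together with minimality to restrict intersection sizes to $0,1,k$ and read off the thresholds $m$ and $l$ from Definition~\ref{dewdmb}. The only cosmetic differences are that you spell out the absence of repeated blocks and the inductive application of Lemma~\ref{leintersbibd}, and that for part~\ref{prwdmbbibd2} you re-invoke the lemma directly rather than translating the intersection sizes from part~\ref{prwdmbbibd1} as the paper does.
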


\begin{proof}
(\ref{prwdmbbibd1}) The blocks $D\in\mathcal{D}$, seen here as sets of vertices of the sub-BIBDs of $(V,\mathcal{B})$, have all the same size $v'$; moreover, every pair of vertices $x,y\in V$ is contained in a sub-BIBD $D$ if and only if $D$ contains the whole (unique) block containing $x,y$, so by Definition~\ref{dewdmb}\ref{dewdmb2} every pair $x,y$ is contained in the same number $m$ of $D\in\mathcal{D}$. Hence $(V,\mathcal{D})$ is a $(v,v',m)$-BIBD.

By Lemma~\ref{leintersbibd}, the intersection of any two $D_{1},D_{2}\in\mathcal{D}$ is a sub-BIBD in the original $(V,\mathcal{B})$: then by minimality the only possibilities for this intersection are the trivial sub-BIBDs of size $0,1,k$ mentioned in the lemma. Obviously, the same happens for the intersection of any $i$ elements of $\mathcal{D}$; since every pair of vertices is contained in $m$ sub-BIBDs, if $i>m$ then it is not possible to have intersection size $>1$, while, since every vertex is contained in $l$ sub-BIBDs, if $i>l$ then the intersection of $i$ blocks must be empty.

(\ref{prwdmbbibd2}) The blocks $D\in\mathcal{D}$, seen now instead as sets of elements of $\mathcal{B}$, have all the same size $b'=\frac{v'(v'-1)}{k(k-1)}$ by Lemma~\ref{lebibdeasy}\ref{lebibdeasy3}; every $B\in\mathcal{B}$ is contained in $m$ blocks $D$ by Definition~\ref{dewdmb}\ref{dewdmb2}, hence $(\mathcal{B},\mathcal{D})$ is a $1$-$(b,b',m)$-design. The facts about the intersection sizes are easy consequences of being a $1$-design, or easy consequences of part (\ref{prwdmbbibd1}): a $k$-intersection in part (\ref{prwdmbbibd1}) corresponds to a $1$-intersection here, and a $0,1$-intersection in part (\ref{prwdmbbibd1}) becomes a $0$-intersection here.
\end{proof}

It is interesting to see also what happens to each $D$. By Lemma~\ref{leintblock} and Proposition~\ref{prwdmbbibd}\ref{prwdmbbibd1}, the collection $\mathcal{D}'$ of pairwise intersections $D_{0}\cap D$ for a fixed $D_{0}$ forms a PBD $(D_{0},\mathcal{D}')$; the properties of this BIBD imply that the induced PBD is rather special: $(D_{0},\mathcal{D}')$ is a BIBD with $\lambda=m-1$ with repeated blocks, even better, it is the union of $m-1$ copies of one BIBD with $\lambda=1$, namely the BIBD $(D_{0},\mathcal{P}_{k}(D_{0})\cap\mathcal{B})$ since the pairwise intersections of subdesigns define the blocks of the original $(V,\mathcal{B})$ itself (if $m>1$; if $m=1$, the collection $\mathcal{D}'$ is empty and the induced BIBD is trivial).

Proposition~\ref{prwdmbbibd} implies that one of the three parameters $l,m,n$ in Definition~\ref{dewdmb} determines the other two. In the future we will mostly use only $m$ and let the other ones be function of $m$.

\begin{corollary}\label{conl}
Let $V$ be a $(v,k,1)$-BIBD with a collection $\mathcal{D}$ of well-distributed minimal $(v',k,1)$-sub-BIBDs. Then:
\begin{enumerate}[(a)]
\item\label{conl1} $n=m\frac{v(v-1)}{v'(v'-1)}$;
\item\label{conl2} $l=m\frac{v-1}{v'-1}$.
\end{enumerate}
\end{corollary}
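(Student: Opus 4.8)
The plan is to read both identities straight off Proposition~\ref{prwdmbbibd}\ref{prwdmbbibd1}, which already establishes that $(V,\mathcal{D})$ is itself a $(v,v',m)$-BIBD, and then to feed this auxiliary design into the elementary counting relations of Lemma~\ref{lebibdeasy}. The corollary is thus essentially a bookkeeping exercise: all the combinatorial work has been done in the proposition, and what remains is to match parameters correctly.

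First I would fix the dictionary between the design $(V,\mathcal{D})$ and the notation of Lemma~\ref{lebibdeasy}. The vertex count $v$ is unchanged; the block size is $v'$ (playing the role of $k$); the index is $m$ (playing the role of $\lambda$); the number of blocks is $n=|\mathcal{D}|$ (playing the role of $b$); and the replication number of $(V,\mathcal{D})$, i.e. the number of its blocks through a fixed vertex, is $l$ (playing the role of $r$). This last identification is precisely Definition~\ref{dewdmb}\ref{dewdmb1}, since the blocks of $(V,\mathcal{D})$ are exactly the subdesigns $D\in\mathcal{D}$ regarded as sets of vertices, and Definition~\ref{dewdmb}\ref{dewdmb1} says each vertex lies in $l$ of them.

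With this dictionary in place, part (\ref{conl1}) is immediate from Lemma~\ref{lebibdeasy}\ref{lebibdeasy3}: substituting $k\mapsto v'$, $\lambda\mapsto m$, $b\mapsto n$ into $b=\lambda\frac{v(v-1)}{k(k-1)}$ yields $n=m\frac{v(v-1)}{v'(v'-1)}$. Likewise part (\ref{conl2}) follows from Lemma~\ref{lebibdeasy}\ref{lebibdeasy2} by substituting $k\mapsto v'$, $\lambda\mapsto m$, $r\mapsto l$ into $r=\lambda\frac{v-1}{k-1}$, giving $l=m\frac{v-1}{v'-1}$.

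There is no real obstacle here; the only point needing a moment's care is the correct matching of $l$ with the replication number $r$ (rather than with one of the intersection parameters appearing in Proposition~\ref{prwdmbbibd}) and of the block size $v'$ with $k$. Once these substitutions are made honestly, both formulas drop out of the two-way counts already carried out in the proof of Lemma~\ref{lebibdeasy}. If one wished to avoid invoking the auxiliary BIBD explicitly, one could reprove (\ref{conl2}) by directly double-counting incident pairs $(x,D)$ with $x\in D$, and (\ref{conl1}) by double-counting incident pairs consisting of a vertex-pair and a subdesign $D$ containing it; but this merely reproduces the argument of Lemma~\ref{lebibdeasy} in the present setting, so appealing to the lemma via Proposition~\ref{prwdmbbibd} is the cleaner route.
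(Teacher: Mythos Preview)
Your proof is correct and follows exactly the paper's approach: invoke Proposition~\ref{prwdmbbibd}\ref{prwdmbbibd1} to recognize $(V,\mathcal{D})$ as a $(v,v',m)$-BIBD, then read off $n$ and $l$ from Lemma~\ref{lebibdeasy}\ref{lebibdeasy3} and~\ref{lebibdeasy}\ref{lebibdeasy2} under the substitutions $k\mapsto v'$, $\lambda\mapsto m$, $b\mapsto n$, $r\mapsto l$. Your explicit parameter dictionary and remark on the alternative direct double-count are helpful elaborations, but the core argument is identical to the paper's.
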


\begin{proof}
By Proposition~\ref{prwdmbbibd}\ref{prwdmbbibd1}, $(V,\mathcal{D})$ is a $(v,v',m)$-BIBD: $n$ is the number of blocks and $l$ is the number of blocks containing one vertex. The results then descend from Lemma~\ref{lebibdeasy}\ref{lebibdeasy3} and Lemma~\ref{lebibdeasy}\ref{lebibdeasy2}, respectively.
\end{proof}

\begin{example}\label{exconstr}
Let $V$ be a $(v,k,1)$-BIBD with well-distributed minimal sub-BIBDs, and suppose that there exists a $(w,v,1)$-BIBD: then we can easily build a $(w,k,1)$-BIBD. In fact, if $(V,\mathcal{B}_{V})$ is a $(v,k,1)$-BIBD and $(W,\mathcal{B}_{W})$ is a $(w,v,1)$-BIBD, we can do the following: for each $B\in\mathcal{B}_{W}$, we choose any bijection $\varphi_{B}:V\rightarrow B$ and replace the block $B$ (seen as an element of $\mathcal{B}_{W}$) with the subsets $\varphi_{B}(B')$ of size $k$ for all $B'\in\mathcal{B}_{V}$. Then the pair $\left(W,\bigcup_{B\in\mathcal{B}_{W}}\{\varphi_{B}(B')|B'\in\mathcal{B}_{V}\}\right)$ is a $(w,k,1)$-BIBD: indeed, every pair of vertices $x,y\in W$ is contained in a unique block $B\in\mathcal{B}_{W}$ and in turn, after having fixed such $B$, the preimages $\varphi^{-1}(x),\varphi^{-1}(y)$ are contained in a unique block $B'\in\mathcal{B}_{V}$ in $V$; therefore condition (\ref{debibd2}) in Definition~\ref{debibd} is satisfied again in the new collection of blocks of $W$, with $\lambda=1$ as before.

Is it true that the new $W$ has well-distributed minimal sub-BIBDs? The sub-BIBDs coming from $V$ are naturally transferred to $W$: every vertex in $W$ is contained in $\frac{w-1}{v-1}$ copies of $V$ (by Lemma~\ref{lebibdeasy}\ref{lebibdeasy2}), and for each copy there are $l=m\frac{v-1}{v'-1}$ sub-BIBDs containing the vertex (by Corollary~\ref{conl}\ref{conl2}); on the other hand, every block of size $k$ is contained in only one copy of $V$ and, inside such copy, in $m$ sub-BIBDs. Therefore, these sub-BIBDs are still well-distributed inside the new $W$. However, it is not always true that they are still minimal: it is possible to obtain a smaller sub-BIBD consisting of blocks coming all from different copies of $V$ (the intersection of these sub-BIBDs and the old ones would at most be trivial $(k,k,1)$-sub-BIBDs, i.e. single blocks); also, for the same reason it is not guaranteed that we do not have other sub-BIBDs of the same size, so that while the ones coming from $V$ are well-distributed it is not sure that the whole collection of sub-BIBDs of size $v'$ in $W$ is well-distributed.

In the case of Examples~\ref{extheone3}-\ref{extheone4}, since $v'=k(k-1)+1$ the problem of minimality would not pose itself: since a $(k(k-1)+1,k,1)$-BIBD is symmetric it has the smallest size among all Steiner systems with that $k$ (Proposition~\ref{prfisher}), therefore in building larger designs satisfying Definition~\ref{dewdmb} using the BIBDs in those examples we would only have to check well-distribution.
\end{example}

\subsection{Intersections of two and three sub-BIBDs}

Once we have our Definition~\ref{dewdmb} and we know that well-distributed sub-BIBDs behave in a way as larger blocks, as shown by Proposition~\ref{prwdmbbibd}, we can start investigating properties of the intersection of sub-BIBDs. This is a well-trodden path of research: the properties of $2$-designs allow to deduce results about intersection of $2$ and $3$ blocks inside them (see for instance \cite{Ma53} \cite{RCW75} \cite{Ca88} to witness how the exploitation works in different fashions).

We will need some simple observations about blocks and well-distributed sub-BIBDs.

\begin{lemma}\label{lewdmbeasy}
Let $V$ be a $(v,k,1)$-BIBD. Then, for any fixed block $B_{1}\in\mathcal{B}$:
\begin{eqnarray}
|\{B_{2}||B_{1}\cap B_{2}|=1\}| & = & \frac{k(v-k)}{k-1} \label{lewdmbeasyb1} \\
|\{B_{2}||B_{1}\cap B_{2}|=0\}| & = & \frac{(v-k^{2}+k-1)(v-k)}{k(k-1)} \label{lewdmbeasyb0}
\end{eqnarray}

Let $V$ be a $(v,k,1)$-BIBD with well-distributed minimal $(v',k,1)$-sub-BIBDs; fix one of them, say $V_{1}$. Then, for any given vertex $x\in V_{1}$ and block $B\subseteq V_{1}$:
\small
\begin{eqnarray}
I_{k}=|\{V_{2}||V_{1}\cap V_{2}|=k\}| & = & \frac{v'(v'-1)(m-1)}{k(k-1)} \label{lewdmbeasyk} \\
|\{V_{2}||V_{1}\cap V_{2}|=k,x\in V_{2}\}| & = & \frac{(v'-1)(m-1)}{k-1} \label{lewdmbeasykx} \\
|\{V_{2}||V_{1}\cap V_{2}|=k,x\not\in V_{2}\}| & = & \frac{(v'-1)(v'-k)(m-1)}{k(k-1)} \label{lewdmbeasyknx} \\
|\{V_{2}||V_{1}\cap V_{2}|=k,|B\cap V_{2}|=1\}| & = & \frac{k(v'-k)(m-1)}{k-1} \label{lewdmbeasykb1} \\
|\{V_{2}||V_{1}\cap V_{2}|=k,|B\cap V_{2}|=0\}| & = & \frac{(v'-k^{2}+k-1)(v'-k)(m-1)}{k(k-1)} \label{lewdmbeasykb0} \\
|\{V_{2}|V_{1}\cap V_{2}=\{x\}\}| & = & m\left(\frac{v-1}{v'-1}-\frac{v'-1}{k-1}\right)+\frac{v'-k}{k-1} \label{lewdmbeasyx} \\
I_{1}=|\{V_{2}||V_{1}\cap V_{2}|=1\}| & = & v'\left(m\left(\frac{v-1}{v'-1}-\frac{v'-1}{k-1}\right)+\frac{v'-k}{k-1}\right) \label{lewdmbeasy1} \\
I_{0}=|\{V_{2}|V_{1}\cap V_{2}=\emptyset\}| & = & m\left(\frac{(v-1)(v-v'^{2})}{v'(v'-1)}+\frac{v'(v'-1)}{k}\right)- \nonumber \\
 & & - \ \frac{(v'-1)(v'-k)}{k} \label{lewdmbeasy0}
\end{eqnarray}
\normalsize
\end{lemma}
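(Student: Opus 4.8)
The plan is to establish the two ``global'' Steiner-system identities first and then bootstrap everything about the subdesigns from them together with the structural results already proved. For \eqref{lewdmbeasyb1} and \eqref{lewdmbeasyb0} I would work inside $(V,\mathcal{B})$ and use that, since $\lambda=1$, two distinct blocks meet in at most one vertex. Fixing $B_{1}$ and a vertex $x\in B_{1}$, the number of blocks through $x$ is $r=\frac{v-1}{k-1}$ by Lemma~\ref{lebibdeasy}\ref{lebibdeasy2}; discarding $B_{1}$ leaves $\frac{v-k}{k-1}$ blocks meeting $B_{1}$ only in $x$. Summing over the $k$ vertices of $B_{1}$ counts each block with $|B_{1}\cap B_{2}|=1$ exactly once, giving \eqref{lewdmbeasyb1}; then \eqref{lewdmbeasyb0} follows by subtracting $B_{1}$ itself and \eqref{lewdmbeasyb1} from the total $b=\frac{v(v-1)}{k(k-1)}$ (Lemma~\ref{lebibdeasy}\ref{lebibdeasy3}) and simplifying.

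The block-intersection identities \eqref{lewdmbeasyk}--\eqref{lewdmbeasykb0} all rest on one observation: by Lemma~\ref{leintersbibd} and minimality, a subdesign $V_{2}\neq V_{1}$ with $|V_{1}\cap V_{2}|=k$ meets $V_{1}$ in exactly one block $B'$, while conversely every block $B'$ of $V_{1}$ lies, by Definition~\ref{dewdmb}\ref{dewdmb2}, in precisely $m-1$ subdesigns other than $V_{1}$. Since $V_{1}$ is itself a $(v',k,1)$-BIBD with $b'=\frac{v'(v'-1)}{k(k-1)}$ blocks, the map $V_{2}\mapsto B'=V_{1}\cap V_{2}$ is $(m-1)$-to-one onto the blocks of $V_{1}$, which gives \eqref{lewdmbeasyk}. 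The refinements come from restricting which $B'$ is shared: the $r'=\frac{v'-1}{k-1}$ blocks of $V_{1}$ through $x$ give \eqref{lewdmbeasykx}, the complementary blocks give \eqref{lewdmbeasyknx}, and applying the already-proved \eqref{lewdmbeasyb1}--\eqref{lewdmbeasyb0} \emph{inside} $V_{1}$ (with $v'$ in place of $v$) to the block $B$ gives \eqref{lewdmbeasykb1} and \eqref{lewdmbeasykb0}; here I would use $B\cap V_{2}=B\cap B'$ to translate the conditions $|B\cap V_{2}|=1$ and $|B\cap V_{2}|=0$ into ``$B'$ meets $B$ in one vertex'' and ``$B'$ is disjoint from $B$''.

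For the vertex-intersection identities I would switch to global bookkeeping in the BIBD $(V,\mathcal{D})$ of Proposition~\ref{prwdmbbibd}\ref{prwdmbbibd1}. A subdesign $V_{2}\neq V_{1}$ through a fixed $x\in V_{1}$ meets $V_{1}$ either in a block through $x$ or in $\{x\}$ alone; since $x$ lies in $l$ subdesigns in total (Corollary~\ref{conl}\ref{conl2}), subtracting \eqref{lewdmbeasykx} from $l-1$ and simplifying (using $\frac{v'-1}{k-1}-1=\frac{v'-k}{k-1}$) yields \eqref{lewdmbeasyx}. Summing \eqref{lewdmbeasyx} over the $v'$ vertices of $V_{1}$, with each $V_{2}$ satisfying $|V_{1}\cap V_{2}|=1$ counted once, gives \eqref{lewdmbeasy1}. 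Finally, as every $V_{2}\neq V_{1}$ meets $V_{1}$ in size $0$, $1$, or $k$, the disjoint count is $I_{0}=n-1-I_{k}-I_{1}$ with $n=m\frac{v(v-1)}{v'(v'-1)}$ from Corollary~\ref{conl}\ref{conl1}; collecting the $m$-linear and constant parts yields \eqref{lewdmbeasy0}.

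I do not expect a conceptual obstacle, since the whole lemma is a sequence of double counts. Two points need care. First, the structural claim that $|V_{1}\cap V_{2}|=k$ forces the intersection to be a single shared block, so that the $(m-1)$-to-one correspondence is genuinely onto with fibers of the stated size; this is exactly where minimality and Lemma~\ref{leintersbibd} are indispensable. Second, the algebraic simplifications in \eqref{lewdmbeasyx} and especially \eqref{lewdmbeasy0}, where the cancellation $\frac{v-1}{v'-1}\left(\frac{v}{v'}-v'\right)=\frac{(v-1)(v-v'^{2})}{v'(v'-1)}$ together with the matching of constant terms must be checked carefully to land on the stated closed forms.
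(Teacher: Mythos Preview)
Your proposal is correct and follows essentially the same approach as the paper: both arguments derive \eqref{lewdmbeasyb1}--\eqref{lewdmbeasyb0} by summing $r-1$ over the vertices of $B_{1}$ and subtracting from $b$, then use the observation that a $k$-intersection with $V_{1}$ is exactly a shared block (via minimality and Lemma~\ref{leintersbibd}) to set up the $(m-1)$-to-one correspondence for \eqref{lewdmbeasyk}--\eqref{lewdmbeasykb0}, and finally obtain \eqref{lewdmbeasyx}--\eqref{lewdmbeasy0} by subtracting from $l-1$ and $n-1$ respectively. Your explicit identification $B\cap V_{2}=B\cap B'$ and your remark on the algebraic cancellation in \eqref{lewdmbeasy0} are a bit more detailed than the paper's write-up, but the underlying counts are identical.
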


\begin{proof}
By Lemma~\ref{lebibdeasy}\ref{lebibdeasy2} there are $\frac{v-1}{k-1}-1$ blocks $B_{2}$ distinct from $B_{1}$ and containing a given vertex of $B_{1}$; also, any two blocks containing two given vertices inside $B_{1}$ must be distinct, because otherwise there would be a block intersecting $B_{1}$ in more than $1$ vertex. Therefore, as there are $k$ vertices in $B_{1}$, we get \eqref{lewdmbeasyb1}. Knowing by Lemma~\ref{lebibdeasy}\ref{lebibdeasy3} that there are $\frac{v(v-1)}{k(k-1)}$ blocks in $V$, and subtracting $B_{1}$ and the blocks considered in \eqref{lewdmbeasyb1}, we get \eqref{lewdmbeasyb0}.

By Definition~\ref{dewdmb}\ref{dewdmb2}, each of the $b'$ blocks $B$ inside $V_{1}$ is contained in $m-1$ sub-BIBDs distinct from $V_{1}$; also, any two sub-BIBDs containing two given blocks inside $V_{1}$ must be distinct, because otherwise there would be a sub-BIBD intersecting $V_{1}$ in more than $k$ vertices. Therefore, using Lemma~\ref{lebibdeasy}\ref{lebibdeasy3}, we obtain \eqref{lewdmbeasyk}.

Lemma~\ref{lebibdeasy}\ref{lebibdeasy2} gives the number of blocks inside $V_{1}$ that intersect a fixed $x$, so reasoning as we did above we obtain \eqref{lewdmbeasykx}. Taking the difference between \eqref{lewdmbeasyk} and \eqref{lewdmbeasykx}, we obtain \eqref{lewdmbeasyknx}.

Using \eqref{lewdmbeasyb1} and \eqref{lewdmbeasyb0} on $V_{1}$ and remembering that there are $m-1$ sub-BIBDs for each block, we obtain \eqref{lewdmbeasykb1} and \eqref{lewdmbeasykb0}.

By Definition~\ref{dewdmb}\ref{dewdmb1}, there are $l$ sub-BIBDs containing any given $x$: subtracting $V_{1}$ itself and the sub-BIBDs counted in \eqref{lewdmbeasykx}, and using Corollary~\ref{conl}\ref{conl2}, we obtain \eqref{lewdmbeasyx}; \eqref{lewdmbeasy1} is obtained simply multiplying \eqref{lewdmbeasyx} by the number of vertices $v'$ in a single subdesign. Finally we consider all the $n$ sub-BIBDs and we recall Corollary~\ref{conl}\ref{conl1}; subtracting $V_{1}$, the $V_{2}$ considered in \eqref{lewdmbeasyk}, and the $V_{2}$ considered in \eqref{lewdmbeasy1}, we get \eqref{lewdmbeasy0}.
\end{proof}

Then we deduce some results about intersections of three sub-BIBDs: these results are actually valid for intersections of blocks inside any BIBD, and they apply in our case thanks to Proposition~\ref{prwdmbbibd}\ref{prwdmbbibd1}. We follow Majumdar \cite{Ma53}, who credits Connor \cite{Co52}.

\begin{lemma}\label{lemaj}
Let $V$ be a $(v,k,1)$-BIBD with a collection $\mathcal{D}$ of well-distributed minimal $(v',k,1)$-sub-BIBDs. Fix two sub-BIBDs $D_{1},D_{2}\in\mathcal{D}$ and let $i\in\{0,1,k\}$ be the size of their intersection; for any $D\neq D_{1},D_{2}$ define $i_{1,D}=|D\cap(D_{1}\setminus D_{2})|$, $i_{2,D}=|D\cap D_{1}\cap D_{2}|$, $i_{3,D}=|D\cap(D_{2}\setminus D_{1})|$. Then:
\small
\begin{eqnarray}
\sum_{D}i_{1,D} & = & (v'-i)\left(m\frac{v-1}{v'-1}-1\right) \label{lemaj1} \ = \ \sum_{D}i_{3,D} \\
\sum_{D}i_{1,D}^{2} & = & (v'-i)\left(m\left(\frac{v-1}{v'-1}+v'-i-1\right)-v'+i\right) \label{lemaj11} \ = \ \sum_{D}i_{3,D}^{2} \\
\sum_{D}i_{2,D} & = & i\left(m\frac{v-1}{v'-1}-2\right) \label{lemaj2} \\
\sum_{D}i_{1,D}i_{2,D} & = & i(v'-i)(m-1) \label{lemaj12} \ = \ \sum_{D}i_{2,D}i_{3,D} \\
\sum_{D}i_{1,D}i_{3,D} & = & (v'-i)^{2}m \label{lemaj13}
\end{eqnarray}
\normalsize
\end{lemma}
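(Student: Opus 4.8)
The plan is to treat each of the five sums as a count of ordered pairs of vertices that lie in a common subdesign $D$ distinct from $D_1,D_2$, and to evaluate that count using the BIBD structure of $(V,\mathcal{D})$ established in Proposition~\ref{prwdmbbibd}\ref{prwdmbbibd1}. The two facts I would lean on throughout are: every pair of distinct vertices of $V$ lies in exactly $m$ members of $\mathcal{D}$ (that is the statement that $(V,\mathcal{D})$ is a $(v,v',m)$-BIBD), and every single vertex lies in exactly $l=m\frac{v-1}{v'-1}$ members of $\mathcal{D}$ by Corollary~\ref{conl}\ref{conl2}. The only genuine subtlety is the bookkeeping of how many of $D_1,D_2$ must be excluded from each count, which depends on where the chosen vertices sit relative to the partition of $D_1\cup D_2$ into $D_1\setminus D_2$, $D_1\cap D_2$, $D_2\setminus D_1$, of sizes $v'-i$, $i$, $v'-i$ respectively.

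First I would dispatch the two linear sums. For \eqref{lemaj1} I rewrite $\sum_D i_{1,D}=\sum_{x\in D_1\setminus D_2}|\{D\neq D_1,D_2:x\in D\}|$. Each such $x$ lies in $l$ members of $\mathcal{D}$; of these $D_1$ contains $x$ but $D_2$ does not (as $x\notin D_2$), so exactly one is removed, giving $l-1$ per vertex and hence $(v'-i)(l-1)=(v'-i)(m\frac{v-1}{v'-1}-1)$. The equality with $\sum_D i_{3,D}$ is the symmetry $D_1\leftrightarrow D_2$. For \eqref{lemaj2}, the same rewriting over $x\in D_1\cap D_2$ gives $l-2$ per vertex, since now both $D_1$ and $D_2$ contain $x$, yielding $i(l-2)$.

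Then I would handle the three quadratic sums by expanding each product as a double sum over ordered pairs of vertices and counting common subdesigns. For \eqref{lemaj11} I write $i_{1,D}^2=\sum_{x,y\in D_1\setminus D_2}[x,y\in D]$ and split off the diagonal $x=y$, which reproduces the linear count $(v'-i)(l-1)$; each of the $(v'-i)(v'-i-1)$ ordered off-diagonal pairs lies in $m$ members of $\mathcal{D}$, of which only $D_1$ must be excluded (both coordinates avoid $D_2$), giving $m-1$ and the total $(v'-i)(l-1)+(v'-i)(v'-i-1)(m-1)$, which simplifies to the claimed value. For \eqref{lemaj12} I sum over $x\in D_1\setminus D_2$, $y\in D_1\cap D_2$: these $i(v'-i)$ pairs are automatically distinct, they lie in $m$ subdesigns, and exactly $D_1$ (containing both) is dropped while $D_2$ (missing $x$) is not, giving $m-1$ and the product $i(v'-i)(m-1)$; the companion identity follows again by symmetry. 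For \eqref{lemaj13} the pairs $x\in D_1\setminus D_2$, $y\in D_2\setminus D_1$ number $(v'-i)^2$ and lie in $m$ subdesigns, but now neither $D_1$ (missing $y$) nor $D_2$ (missing $x$) contains both, so nothing is excluded and the count is exactly $(v'-i)^2 m$.

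The point to watch is precisely this pattern of correction terms $-1$ versus $-2$ versus $0$, which is entirely dictated by whether $D_1$ and $D_2$ individually contain the pair of vertices being tested; once that is tabulated against the three regions of the partition, every sum is immediate. The only remaining effort is the routine algebraic simplification of \eqref{lemaj11} after substituting $l=m\frac{v-1}{v'-1}$, so I do not anticipate any serious obstacle beyond keeping the exclusion counts straight.
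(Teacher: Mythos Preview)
Your proposal is correct and follows essentially the same approach as the paper: both arguments interpret each sum as a count of (vertex, subdesign) or (ordered vertex pair, subdesign) incidences, evaluate it using $l=m\frac{v-1}{v'-1}$ for single vertices and $m$ for distinct pairs, and then subtract whichever of $D_1,D_2$ actually contain the vertices in question. The bookkeeping you describe ($-1$, $-2$, or $0$ depending on the region) is exactly what the paper does, and the diagonal/off-diagonal split for \eqref{lemaj11} matches as well.
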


\begin{proof}
Summing all the $i_{1,D}$ is the same as counting pairs $(x,D)$ such that $D\neq D_{1},D_{2}$ and $x\in D\cap(D_{1}\setminus D_{2})$: there are $v'-i$ such vertices and $l-1$ such subdesigns for each vertex, so by Corollary~\ref{conl}\ref{conl2} we obtain \eqref{lemaj1}. An analogous reasoning holds for \eqref{lemaj2}, although in this case there are only $i$ vertices and we have to exclude both $D_{1}$ and $D_{2}$.

Summing all the $i_{1,D}^{2}$ is the same as counting triples $(x,y,D)$ such that $D\neq D_{1},D_{2}$ and $x,y\in D\cap(D_{1}\setminus D_{2})$. To account for the case $x=y$ we can just use \eqref{lemaj1}; when $x\neq y$ we are simply considering every (ordered) pair of vertices $x,y$ and every sub-BIBD distinct from $D_{1}$ that contains it: there are $(v'-i)(v'-i-1)$ possible ordered pairs of distinct vertices and $m-1$ sub-BIBDs for each choice, and summing them with \eqref{lemaj1} we obtain \eqref{lemaj11}. Analogously, we get \eqref{lemaj12} (resp. \eqref{lemaj13}) by considering $i(v'-i)$ pairs of vertices (resp. $(v'-i)^{2}$ pairs) and $m-1$ sub-BIBDs for each choice (resp. $m$).
\end{proof}

Using Lemma~\ref{lemaj}, we can obtain interesting structural results about the frequency of certain combinations of intersections of three sub-BIBDs. These are essentially based on the fact that minimal sub-BIBDs have only three possible intersection sizes.

Let us start with the case $i=1$. For any two $D_{1},D_{2}$ with intersection size $1$, we define below certain numbers $a^{(i)}_{D_{1}D_{2}}$; for the sake of brevity, when we say ``$a^{(i)}_{D_{1}D_{2}}=(x_{1},x_{2},x_{3})$'' we mean that $a^{(i)}_{D_{1}D_{2}}$ is equal to the number of $D$ such that $|D\cap(D_{1}\setminus D_{2})|=x_{1}$, $|D\cap D_{1}\cap D_{2}|=x_{2}$, $|D\cap(D_{2}\setminus D_{1})|=x_{3}$.
\begin{align*}
a^{(1)}_{D_{1}D_{2}}=& \ (k,0,k) & a^{(7)}_{D_{1}D_{2}}=& \ (k-1,1,0) \\
a^{(2)}_{D_{1}D_{2}}=& \ (k-1,1,k-1) & a^{(8)}_{D_{1}D_{2}}=& \ (0,1,k-1) \\
a^{(3)}_{D_{1}D_{2}}=& \ (k,0,1) & a^{(9)}_{D_{1}D_{2}}=& \ (1,0,1) \\
a^{(4)}_{D_{1}D_{2}}=& \ (1,0,k) & a^{(10)}_{D_{1}D_{2}}=& \ (1,0,0) \\
a^{(5)}_{D_{1}D_{2}}=& \ (k,0,0) & a^{(11)}_{D_{1}D_{2}}=& \ (0,0,1) \\
a^{(6)}_{D_{1}D_{2}}=& \ (0,0,k) & a^{(12)}_{D_{1}D_{2}}=& \ (0,1,0) \\
 & & a^{(13)}_{D_{1}D_{2}}=& \ (0,0,0)
\end{align*}
It is clear from the fact that any pairwise intersection has size $\in\{0,1,k\}$ that the list exhausts all possibilities, i.e. every $D\neq D_{1},D_{2}$ is counted in one of the $13$ numbers above.

\begin{proposition}\label{prsyst1}
Let $V$ be a $(v,k,1)$-BIBD that has well-distributed minimal $(v',k,1)$-sub-BIBDs. Fix two such subdesigns $D_{1},D_{2}$ with $|D_{1}\cap D_{2}|=1$ (provided that they exist); then:
\small
\begin{eqnarray*}
a^{(5)}_{D_{1}D_{2}} & = & \frac{(m-1)(v'-1)(v'-k)}{k(k-1)}-a^{(1)}_{D_{1}D_{2}}-a^{(3)}_{D_{1}D_{2}} \\
a^{(6)}_{D_{1}D_{2}} & = & \frac{(m-1)(v'-1)(v'-k)}{k(k-1)}-a^{(1)}_{D_{1}D_{2}}-a^{(4)}_{D_{1}D_{2}} \\
a^{(7)}_{D_{1}D_{2}} & = & \frac{(m-1)(v'-1)}{k-1}-a^{(2)}_{D_{1}D_{2}} \ = \ a^{(8)}_{D_{1}D_{2}} \\
a^{(9)}_{D_{1}D_{2}} & = & m(v'-1)^{2}-k^{2}a^{(1)}_{D_{1}D_{2}}-(k-1)^{2}a^{(2)}_{D_{1}D_{2}}-ka^{(3)}_{D_{1}D_{2}}-ka^{(4)}_{D_{1}D_{2}} \\
a^{(10)}_{D_{1}D_{2}} & = & m\left(v-1-\frac{k(v'-1)^{2}}{k-1}\right)+\frac{(v'-1)(v'-k)}{k-1}+ \\
 & & + \ k^{2}a^{(1)}_{D_{1}D_{2}}+(k-1)^{2}a^{(2)}_{D_{1}D_{2}}+ka^{(3)}_{D_{1}D_{2}}+(k-1)a^{(4)}_{D_{1}D_{2}} \\
a^{(11)}_{D_{1}D_{2}} & = & m\left(v-1-\frac{k(v'-1)^{2}}{k-1}\right)+\frac{(v'-1)(v'-k)}{k-1}+ \\
 & & + \ k^{2}a^{(1)}_{D_{1}D_{2}}+(k-1)^{2}a^{(2)}_{D_{1}D_{2}}+(k-1)a^{(3)}_{D_{1}D_{2}}+ka^{(4)}_{D_{1}D_{2}} \\
a^{(12)}_{D_{1}D_{2}} & = & m\left(\frac{v-1}{v'-1}-2\frac{v'-1}{k-1}\right)+2\frac{v'-k}{k-1}+a^{(2)}_{D_{1}D_{2}} \\
a^{(13)}_{D_{1}D_{2}} & = & m\left(\frac{v(v-1)}{v'(v'-1)}+(v'-1)^{2}+\frac{v-1}{v'-1}-\right. \\
 & & - \left.2v'\left(\frac{v-1}{v'-1}-\frac{v'-1}{k}\right)\right)-\frac{2(v'-1)(v'-k)}{k}-\\
 & & - \ (k^{2}-1)a^{(1)}_{D_{1}D_{2}}-(k-1)^{2}a^{(2)}_{D_{1}D_{2}}-(k-1)a^{(3)}_{D_{1}D_{2}}-(k-1)a^{(4)}_{D_{1}D_{2}}
\end{eqnarray*}
\normalsize
\end{proposition}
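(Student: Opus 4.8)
The plan is to set up and solve a linear system in the thirteen unknowns, treating $a^{(1)},a^{(2)},a^{(3)},a^{(4)}$ (abbreviate $a^{(j)}:=a^{(j)}_{D_1D_2}$) as free parameters and expressing the remaining nine in terms of them. The equations arise by specializing Lemma~\ref{lemaj} to $i=1$. Each of the five identities \eqref{lemaj1}, \eqref{lemaj11}, \eqref{lemaj2}, \eqref{lemaj12}, \eqref{lemaj13} is a weighted count $\sum_D(\cdots)$ over all $D\neq D_1,D_2$, and since every such $D$ falls into exactly one of the thirteen types, each identity becomes a linear equation whose coefficients are obtained by evaluating the relevant monomial in $(i_{1,D},i_{2,D},i_{3,D})$ on each of the thirteen triples $(x_1,x_2,x_3)$. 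The pair \eqref{lemaj1} yields $\sum_D i_{1,D}$ and $\sum_D i_{3,D}$ as two distinct equations (distinct because the types are not symmetric under $x_1\leftrightarrow x_3$), and likewise for \eqref{lemaj11} and \eqref{lemaj12}. To these eight equations I would add the total count $\sum_{j=1}^{13}a^{(j)}=n-2$, with $n=m\frac{v(v-1)}{v'(v'-1)}$ by Corollary~\ref{conl}\ref{conl1}.

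With the system in hand, most unknowns fall out by inspection rather than by full elimination. Identity \eqref{lemaj13} reads $k^2a^{(1)}+(k-1)^2a^{(2)}+ka^{(3)}+ka^{(4)}+a^{(9)}=m(v'-1)^2$, isolating $a^{(9)}$ at once. The two equations from \eqref{lemaj12} read $(k-1)(a^{(2)}+a^{(7)})=(v'-1)(m-1)$ and $(k-1)(a^{(2)}+a^{(8)})=(v'-1)(m-1)$, giving $a^{(7)}=a^{(8)}=\frac{(m-1)(v'-1)}{k-1}-a^{(2)}$ immediately (hence the asserted equality). For $a^{(5)}$ I would subtract the $\sum_D i_{1,D}$ equation from the $\sum_D i_{1,D}^2$ equation: every type with $x_1\in\{0,1\}$ satisfies $x_1^2=x_1$ and drops out, leaving $k(k-1)(a^{(1)}+a^{(3)}+a^{(5)})+(k-1)(k-2)(a^{(2)}+a^{(7)})=S_{11}-S_1$, where $S_1,S_{11}$ are the right-hand sides of \eqref{lemaj1}, \eqref{lemaj11} at $i=1$. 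Since $a^{(2)}+a^{(7)}$ is already determined and $S_{11}-S_1=(m-1)(v'-1)(v'-2)$ collapses cleanly, this yields the stated $a^{(5)}$; the mirror computation (reading \eqref{lemaj1}, \eqref{lemaj11} through $i_{3,D}$) gives $a^{(6)}$.

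The remaining four unknowns follow by back-substitution. Plugging the now-known $a^{(5)},a^{(7)},a^{(9)}$ into the $\sum_D i_{1,D}$ equation solves for $a^{(10)}$, and the mirror equation for $a^{(11)}$. Equation \eqref{lemaj2}, namely $a^{(2)}+a^{(7)}+a^{(8)}+a^{(12)}=m\frac{v-1}{v'-1}-2$, then gives $a^{(12)}$ after substituting $a^{(7)}=a^{(8)}$. Finally $a^{(13)}=(n-2)-\sum_{j=1}^{12}a^{(j)}$ delivers the last and longest formula.

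I expect the only real difficulty to be bookkeeping rather than any structural subtlety. The sensitive point is confirming that the constant aggregates collapse to the asserted closed forms: in particular, in the computation of $a^{(10)}$ and $a^{(11)}$ one must verify the identity $\frac{v'-k}{k-1}+1+(v'-1)=\frac{k(v'-1)}{k-1}$, which is what turns the accumulated $m$-terms into $-\frac{mk(v'-1)^2}{k-1}$, and one must track the analogous cancellations (of $a^{(2)}$ in $a^{(5)}$, of the $x_1\in\{0,1\}$ types in the squared identities, and of the constants in $a^{(13)}$). That the system is underdetermined of corank exactly four is itself forced by the structure: the four ``full'' configurations recorded by $a^{(1)},\dots,a^{(4)}$ are invisible to the low-degree counting identities of Lemma~\ref{lemaj}, so no further relations among them can be extracted this way.
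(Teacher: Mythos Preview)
Your proposal is correct and follows essentially the same approach as the paper: the paper writes out the eight sums from Lemma~\ref{lemaj} (two each from \eqref{lemaj1}, \eqref{lemaj11}, \eqref{lemaj12} and one each from \eqref{lemaj2}, \eqref{lemaj13}) as linear combinations of the $a^{(i)}$, adjoins the total count $\sum_{i=1}^{13}a^{(i)}=m\frac{v(v-1)}{v'(v'-1)}-2$, specializes to $i=1$, and then simply says ``inverting the linear system we obtain the result.'' Your step-by-step elimination (isolating $a^{(9)}$ from \eqref{lemaj13}, $a^{(7)},a^{(8)}$ from \eqref{lemaj12}, then $a^{(5)},a^{(6)}$ via the difference $\sum i_{1,D}^2-\sum i_{1,D}$, and back-substituting for the rest) is exactly one way to carry out that inversion by hand, and the intermediate simplifications you flag are the right ones.
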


\begin{proof}
We compute the sums on the LHS of the equations in Lemma~\ref{lemaj} in this situation. By our definitions:
\small
\begin{eqnarray*}
\sum_{D}i_{1,D} & = & ka^{(1)}_{D_{1}D_{2}}+ (k-1)a^{(2)}_{D_{1}D_{2}}+ ka^{(3)}_{D_{1}D_{2}}+ a^{(4)}_{D_{1}D_{2}}+ ka^{(5)}_{D_{1}D_{2}}+ \\
 & & + \ (k-1)a^{(7)}_{D_{1}D_{2}}+ a^{(9)}_{D_{1}D_{2}}+ a^{(10)}_{D_{1}D_{2}} \\
\sum_{D}i_{3,D} & = & ka^{(1)}_{D_{1}D_{2}}+ (k-1)a^{(2)}_{D_{1}D_{2}}+ a^{(3)}_{D_{1}D_{2}}+ ka^{(4)}_{D_{1}D_{2}}+ ka^{(6)}_{D_{1}D_{2}}+ \\
 & & + \ (k-1)a^{(8)}_{D_{1}D_{2}}+ a^{(9)}_{D_{1}D_{2}}+ a^{(11)}_{D_{1}D_{2}} \\
\sum_{D}i_{1,D}^{2} & = & k^{2}a^{(1)}_{D_{1}D_{2}}+ (k-1)^{2}a^{(2)}_{D_{1}D_{2}}+ k^{2}a^{(3)}_{D_{1}D_{2}}+ a^{(4)}_{D_{1}D_{2}}+ k^{2}a^{(5)}_{D_{1}D_{2}}+ \\
 & & + \ (k-1)^{2}a^{(7)}_{D_{1}D_{2}}+ a^{(9)}_{D_{1}D_{2}}+ a^{(10)}_{D_{1}D_{2}} \\
\sum_{D}i_{3,D}^{2} & = & k^{2}a^{(1)}_{D_{1}D_{2}}+ (k-1)^{2}a^{(2)}_{D_{1}D_{2}}+ a^{(3)}_{D_{1}D_{2}}+ k^{2}a^{(4)}_{D_{1}D_{2}}+ k^{2}a^{(6)}_{D_{1}D_{2}}+ \\
 & & + \ (k-1)^{2}a^{(8)}_{D_{1}D_{2}}+ a^{(9)}_{D_{1}D_{2}}+ a^{(11)}_{D_{1}D_{2}} \\
\sum_{D}i_{2,D} & = & a^{(2)}_{D_{1}D_{2}}+ a^{(7)}_{D_{1}D_{2}}+ a^{(8)}_{D_{1}D_{2}}+ a^{(12)}_{D_{1}D_{2}} \\
\sum_{D}i_{1,D}i_{2,D} & = & (k-1)a^{(2)}_{D_{1}D_{2}}+ (k-1)a^{(7)}_{D_{1}D_{2}} \\
\sum_{D}i_{2,D}i_{3,D} & = & (k-1)a^{(2)}_{D_{1}D_{2}}+ (k-1)a^{(8)}_{D_{1}D_{2}} \\
\sum_{D}i_{1,D}i_{3,D} & = & k^{2}a^{(1)}_{D_{1}D_{2}}+ (k-1)^{2}a^{(2)}_{D_{1}D_{2}}+ ka^{(3)}_{D_{1}D_{2}}+ ka^{(4)}_{D_{1}D_{2}}+ a^{(9)}_{D_{1}D_{2}}
\end{eqnarray*}
\normalsize
Since the $a^{(i)}_{D_{1}D_{2}}$ count all $D\neq D_{1},D_{2}$, we also have:
\begin{equation*}
m\frac{v(v-1)}{v'(v'-1)}-2 \ = \ \sum_{i=1}^{13}a^{(i)}_{D_{1}D_{2}}
\end{equation*}
Then using $i=1$ in Lemma~\ref{lemaj} and inverting the linear system we obtain the result.
\end{proof}

Then we handle the case $i=k$. For any two $D_{1},D_{2}$ with intersection size $k$, we define the following numbers:
\begin{align*}
c^{(1)}_{D_{1}D_{2}}=& \ (k,0,k) & c^{(7)}_{D_{1}D_{2}}=& \ (k-1,1,0) \\
c^{(2)}_{D_{1}D_{2}}=& \ (k-1,1,k-1) & c^{(8)}_{D_{1}D_{2}}=& \ (0,1,k-1) \\
c^{(3)}_{D_{1}D_{2}}=& \ (k,0,1) & c^{(9)}_{D_{1}D_{2}}=& \ (1,0,1) \\
c^{(4)}_{D_{1}D_{2}}=& \ (1,0,k) & c^{(10)}_{D_{1}D_{2}}=& \ (1,0,0) \\
c^{(5)}_{D_{1}D_{2}}=& \ (k,0,0) & c^{(11)}_{D_{1}D_{2}}=& \ (0,0,1) \\
c^{(6)}_{D_{1}D_{2}}=& \ (0,0,k) & c^{(12)}_{D_{1}D_{2}}=& \ (0,1,0) \\
 & & c^{(13)}_{D_{1}D_{2}}=& \ (0,0,0)
\end{align*}
This time, the list exhausts all $D$ that do not intersect $D_{1},D_{2}$ in exactly $D_{1}\cap D_{2}$; of course, we already know by Definition~\ref{dewdmb}\ref{dewdmb2} that there are $m-2$ sub-BIBDs $D$ with $D\cap D_{1}=D\cap D_{2}=D_{1}\cap D_{2}$.

\begin{proposition}\label{prsystk}
Let $V$ be a $(v,k,1)$-BIBD that has well-distributed minimal $(v',k,1)$-sub-BIBDs. Fix two such subdesigns $D_{1},D_{2}$ with $|D_{1}\cap D_{2}|=k$ (provided that they exist); then:
\small
\begin{eqnarray*}
c^{(5)}_{D_{1}D_{2}} & = & \frac{(m-1)(v'-k)(v'-k^{2}+k-1)}{k(k-1)}-c^{(1)}_{D_{1}D_{2}}-c^{(3)}_{D_{1}D_{2}} \\
c^{(6)}_{D_{1}D_{2}} & = & \frac{(m-1)(v'-k)(v'-k^{2}+k-1)}{k(k-1)}-c^{(1)}_{D_{1}D_{2}}-c^{(4)}_{D_{1}D_{2}} \\
c^{(7)}_{D_{1}D_{2}} & = & \frac{k(m-1)(v'-k)}{k-1}-c^{(2)}_{D_{1}D_{2}} \ = \ c^{(8)}_{D_{1}D_{2}} \\
c^{(9)}_{D_{1}D_{2}} & = & m(v'-k)^{2}-k^{2}c^{(1)}_{D_{1}D_{2}}-(k-1)^{2}c^{(2)}_{D_{1}D_{2}}-kc^{(3)}_{D_{1}D_{2}}-kc^{(4)}_{D_{1}D_{2}} \\
c^{(10)}_{D_{1}D_{2}} & = & m(v'-k)\left(\frac{v-1}{v'-1}-\frac{kv'-k^{2}+k-1}{k-1}\right)+\frac{(v'-k)^{2}}{k-1}+ \\
 & & + \ k^{2}c^{(1)}_{D_{1}D_{2}}+(k-1)^{2}c^{(2)}_{D_{1}D_{2}}+kc^{(3)}_{D_{1}D_{2}}+(k-1)c^{(4)}_{D_{1}D_{2}} \\
c^{(11)}_{D_{1}D_{2}} & = & m(v'-k)\left(\frac{v-1}{v'-1}-\frac{kv'-k^{2}+k-1}{k-1}\right)+\frac{(v'-k)^{2}}{k-1}+ \\
 & & + \ k^{2}c^{(1)}_{D_{1}D_{2}}+(k-1)^{2}c^{(2)}_{D_{1}D_{2}}+(k-1)c^{(3)}_{D_{1}D_{2}}+kc^{(4)}_{D_{1}D_{2}} \\
c^{(12)}_{D_{1}D_{2}} & = & mk\left(\frac{v-1}{v'-1}-2\frac{v'-k}{k-1}-1\right)+2k\frac{v'-k}{k-1}+c^{(2)}_{D_{1}D_{2}} \\
c^{(13)}_{D_{1}D_{2}} & = & m\left(1-k+\frac{v(v-1)}{v'(v'-1)}+(v'-k)^{2}+k\frac{v-1}{v'-1}-\right. \\
 & & \left.-2v'\left(\frac{v-1}{v'-1}-\frac{v'-1}{k}\right)\right)-\frac{2(v'-1)(v'-k)}{k}- \\
 & & - \ (k^{2}-1)c^{(1)}_{D_{1}D_{2}}-(k-1)^{2}c^{(2)}_{D_{1}D_{2}}-(k-1)c^{(3)}_{D_{1}D_{2}}-(k-1)c^{(4)}_{D_{1}D_{2}}
\end{eqnarray*}
\normalsize
\end{proposition}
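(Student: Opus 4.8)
The plan is to reproduce, almost verbatim, the argument of Proposition~\ref{prsyst1}, now setting $i=k$ throughout Lemma~\ref{lemaj}. First I would expand each of the eight sums of that lemma — $\sum_{D}i_{1,D}$, $\sum_{D}i_{3,D}$, $\sum_{D}i_{1,D}^{2}$, $\sum_{D}i_{3,D}^{2}$, $\sum_{D}i_{2,D}$, $\sum_{D}i_{1,D}i_{2,D}$, $\sum_{D}i_{2,D}i_{3,D}$ and $\sum_{D}i_{1,D}i_{3,D}$ — as linear combinations of the $c^{(j)}_{D_{1}D_{2}}$, reading the coefficients straight off the thirteen triples $(x_{1},x_{2},x_{3})$ that define them. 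Since those triples are literally the same as the ones defining the $a^{(j)}_{D_{1}D_{2}}$, the coefficient patterns of all eight expansions coincide with the ones already computed in the proof of Proposition~\ref{prsyst1}, with each $a^{(j)}$ replaced by the corresponding $c^{(j)}$.

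The one genuinely new ingredient — and the step needing care — is the family of subdesigns $D$ with $D\cap D_{1}=D\cap D_{2}=D_{1}\cap D_{2}$. When $i=k$ the common intersection $B_{0}:=D_{1}\cap D_{2}$ is an entire block, and every such $D$ has $i_{1,D}=i_{3,D}=0$ and $i_{2,D}=k$, i.e. a triple of type $(0,k,0)$ that is deliberately absent from the list of thirteen $c^{(j)}$. By Definition~\ref{dewdmb}\ref{dewdmb2} the block $B_{0}$ lies in exactly $m$ subdesigns, two of which are $D_{1}$ and $D_{2}$, so there are precisely $m-2$ such $D$. They contribute nothing to the seven sums that carry a factor $i_{1,D}$ or $i_{3,D}$, but they add $k(m-2)$ to $\sum_{D}i_{2,D}$; hence that expansion becomes $c^{(2)}+c^{(7)}+c^{(8)}+c^{(12)}+k(m-2)$. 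The global count must be adjusted in the same spirit: the thirteen $c^{(j)}$ now exhaust all $D\neq D_{1},D_{2}$ \emph{except} these $m-2$ subdesigns, so by Corollary~\ref{conl}\ref{conl1} the closing identity reads $\sum_{j=1}^{13}c^{(j)}_{D_{1}D_{2}}=m\frac{v(v-1)}{v'(v'-1)}-m$, with $-m$ in place of the $-2$ appearing in the $i=1$ case.

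With these nine linear relations in hand (the eight from Lemma~\ref{lemaj} at $i=k$, plus the count), I would solve the system treating $c^{(1)},c^{(2)},c^{(3)},c^{(4)}$ as free parameters. Several values drop out at once: $c^{(7)}=c^{(8)}$ together with their common value come from \eqref{lemaj12}, namely $\sum_{D}i_{1,D}i_{2,D}=\sum_{D}i_{2,D}i_{3,D}=k(v'-k)(m-1)$; the value $c^{(9)}$ comes directly from \eqref{lemaj13}; and $c^{(5)},c^{(6)}$ come from the combination $\sum_{D}i_{1,D}^{2}-\sum_{D}i_{1,D}$ (respectively its mirror image), which isolates the triples with first coordinate $k$. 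The latter also admits a clean combinatorial check: a $D$ with $i_{1,D}=k$ meets $D_{1}$ in a full block disjoint from $B_{0}$, so $c^{(1)}+c^{(3)}+c^{(5)}$ equals the number of blocks of $D_{1}$ disjoint from $B_{0}$, counted with multiplicity $m-1$, which by \eqref{lewdmbeasyb0} applied inside $D_{1}$ is $\frac{(m-1)(v'-k)(v'-k^{2}+k-1)}{k(k-1)}$. The remaining values $c^{(10)},c^{(11)},c^{(12)},c^{(13)}$ then follow by back-substitution using $\sum_{D}i_{2,D}$ and the count.

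The hard part will be purely bookkeeping rather than conceptual: the inversion is long, and one must carry the extra $k(m-2)$ and the modified constant $-m$ consistently all the way into the final formulas for $c^{(12)}$ and especially $c^{(13)}$, where the terms accumulate most heavily. A useful safeguard is the symmetry $D_{1}\leftrightarrow D_{2}$, which swaps $i_{1,D}\leftrightarrow i_{3,D}$ and must therefore leave $c^{(1)},c^{(2)},c^{(9)},c^{(12)},c^{(13)}$ invariant while exchanging the pairs $c^{(3)}\!\leftrightarrow\!c^{(4)}$, $c^{(5)}\!\leftrightarrow\!c^{(6)}$, $c^{(7)}\!\leftrightarrow\!c^{(8)}$ and $c^{(10)}\!\leftrightarrow\!c^{(11)}$; checking that the resulting expressions respect these exchanges catches essentially every sign or coefficient slip.
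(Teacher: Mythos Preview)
Your plan is correct and essentially identical to the paper's own proof: expand the sums of Lemma~\ref{lemaj} in the $c^{(j)}$ with the same coefficient patterns as for the $a^{(j)}$, adjoin the extra $k(m-2)$ to $\sum_D i_{2,D}$ and use the modified total count $\sum_{j}c^{(j)}=m\bigl(\tfrac{v(v-1)}{v'(v'-1)}-1\bigr)$, then invert. Your additional combinatorial cross-check via \eqref{lewdmbeasyb0} and the $D_1\leftrightarrow D_2$ symmetry test are not in the paper but are sound and helpful for catching arithmetic errors.
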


\begin{proof}
We compute again the sums on the LHS of Lemma~\ref{lemaj}; the expressions look exactly as in Proposition~\ref{prsyst1} with $c^{(i)}_{D_{1}D_{2}}$ instead of $a^{(i)}_{D_{1}D_{2}}$, except for:
\begin{equation*}
\sum_{D}i_{2,D}=c^{(2)}_{D_{1}D_{2}}+ c^{(7)}_{D_{1}D_{2}}+ c^{(8)}_{D_{1}D_{2}}+ c^{(12)}_{D_{1}D_{2}}+ k(m-2)
\end{equation*}
since in this case we have to sum the contribution from the subdesigns that intersect $D_{1}$ and $D_{2}$ exactly in $D_{1}\cap D_{2}$. As we already mentioned we also have:
\begin{equation*}
m\left(\frac{v(v-1)}{v'(v'-1)}-1\right) \ = \ \sum_{i=1}^{13}c^{(i)}_{D_{1}D_{2}}
\end{equation*}
Then we use Lemma~\ref{lemaj} with $i=k$ and we invert the system.
\end{proof}

Finally, we deal with the case $i=0$. For any two disjoint $D_{1},D_{2}$ we define:
\begin{align*}
e^{(1)}_{D_{1}D_{2}}=& \ (k,0,k) & e^{(6)}_{D_{1}D_{2}}=& \ (1,0,1) \\
e^{(2)}_{D_{1}D_{2}}=& \ (k,0,1) & e^{(7)}_{D_{1}D_{2}}=& \ (1,0,0) \\
e^{(3)}_{D_{1}D_{2}}=& \ (1,0,k) & e^{(8)}_{D_{1}D_{2}}=& \ (0,0,1) \\
e^{(4)}_{D_{1}D_{2}}=& \ (k,0,0) & e^{(9)}_{D_{1}D_{2}}=& \ (0,0,0) \\
e^{(5)}_{D_{1}D_{2}}=& \ (0,0,k) & &
\end{align*}
Again, all $D\neq D_{1},D_{2}$ are counted in one of the numbers above.

\begin{proposition}\label{prsyst0}
Let $V$ be a $(v,k,1)$-BIBD that has well-distributed minimal $(v',k,1)$-sub-BIBDs. Fix two such subdesigns $D_{1},D_{2}$ that are disjoint from each other (provided that they exist); then:
\small
\begin{eqnarray*}
e^{(4)}_{D_{1}D_{2}} & = & \frac{(m-1)v'(v'-1)}{k(k-1)}-e^{(1)}_{D_{1}D_{2}}-e^{(2)}_{D_{1}D_{2}} \\
e^{(5)}_{D_{1}D_{2}} & = & \frac{(m-1)v'(v'-1)}{k(k-1)}-e^{(1)}_{D_{1}D_{2}}-e^{(3)}_{D_{1}D_{2}} \\
e^{(6)}_{D_{1}D_{2}} & = & mv'^{2}-k^{2}e^{(1)}_{D_{1}D_{2}}-ke^{(2)}_{D_{1}D_{2}}-ke^{(3)}_{D_{1}D_{2}} \\
e^{(7)}_{D_{1}D_{2}} & = & mv'\left(\frac{v-1}{v'-1}-\frac{kv'-1}{k-1}\right)+\frac{v'(v'-k)}{k-1} +k^{2}e^{(1)}_{D_{1}D_{2}}+ke^{(2)}_{D_{1}D_{2}}+(k-1)e^{(3)}_{D_{1}D_{2}} \\
e^{(8)}_{D_{1}D_{2}} & = & mv'\left(\frac{v-1}{v'-1}-\frac{kv'-1}{k-1}\right)+\frac{v'(v'-k)}{k-1} +k^{2}e^{(1)}_{D_{1}D_{2}}+(k-1)e^{(2)}_{D_{1}D_{2}}+ke^{(3)}_{D_{1}D_{2}} \\
e^{(9)}_{D_{1}D_{2}} & = & m\left(\frac{v(v-1)}{v'(v'-1)}+v'^{2}-2v'\left(\frac{v-1}{v'-1}-\frac{v'-1}{k}\right)\right)-\frac{2(v'-1)(v'-k)}{k}- \\
 & & -(k^{2}-1)e^{(1)}_{D_{1}D_{2}}-(k-1)e^{(2)}_{D_{1}D_{2}}-(k-1)e^{(3)}_{D_{1}D_{2}}
\end{eqnarray*}
\normalsize
\end{proposition}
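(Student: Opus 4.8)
The plan is to follow verbatim the strategy of Propositions~\ref{prsyst1} and \ref{prsystk}: express each sum on the left-hand side of Lemma~\ref{lemaj} in terms of the nine quantities $e^{(i)}_{D_{1}D_{2}}$, then invert the resulting linear system. The decisive simplification in the present case is that $D_{1}\cap D_{2}=\emptyset$, so $i_{2,D}=|D\cap D_{1}\cap D_{2}|=0$ for every $D$; this is exactly why only nine configurations occur (the middle coordinate is forced to $0$) and why $i_{1,D}=|D\cap D_{1}|$ and $i_{3,D}=|D\cap D_{2}|$ each range only over $\{0,1,k\}$ by minimality, producing the $3\times 3$ list above.

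First I would read off the five surviving sums from the coordinates in the list, obtaining for instance $\sum_{D}i_{1,D}=k(e^{(1)}_{D_{1}D_{2}}+e^{(2)}_{D_{1}D_{2}}+e^{(4)}_{D_{1}D_{2}})+(e^{(3)}_{D_{1}D_{2}}+e^{(6)}_{D_{1}D_{2}}+e^{(7)}_{D_{1}D_{2}})$ and $\sum_{D}i_{1,D}i_{3,D}=k^{2}e^{(1)}_{D_{1}D_{2}}+k(e^{(2)}_{D_{1}D_{2}}+e^{(3)}_{D_{1}D_{2}})+e^{(6)}_{D_{1}D_{2}}$, with $\sum_{D}i_{3,D}$, $\sum_{D}i_{1,D}^{2}$, $\sum_{D}i_{3,D}^{2}$ handled analogously. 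Crucially, the three sums involving $i_{2,D}$ vanish identically, which is consistent with Lemma~\ref{lemaj} at $i=0$ (there $\sum_{D}i_{2,D}$, $\sum_{D}i_{1,D}i_{2,D}$, $\sum_{D}i_{2,D}i_{3,D}$ are all $0$) and hence contribute no information. What remains are five equations, supplemented by the counting identity $\sum_{i=1}^{9}e^{(i)}_{D_{1}D_{2}}=m\frac{v(v-1)}{v'(v'-1)}-2$, valid by Corollary~\ref{conl}\ref{conl1} because the $e^{(i)}_{D_{1}D_{2}}$ exhaust all $D\neq D_{1},D_{2}$. Note that, unlike the case $i=k$, no extra block-sharing term is subtracted: here the common ``intersection in $D_{1}\cap D_{2}$'' is simply the configuration $e^{(9)}_{D_{1}D_{2}}=(0,0,0)$, already counted.

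With six equations and nine unknowns I expect to solve for $e^{(4)}_{D_{1}D_{2}},\dots,e^{(9)}_{D_{1}D_{2}}$ in terms of the three free parameters $e^{(1)}_{D_{1}D_{2}},e^{(2)}_{D_{1}D_{2}},e^{(3)}_{D_{1}D_{2}}$. Concretely: subtracting the equation for $\sum_{D}i_{1,D}$ from that for $\sum_{D}i_{1,D}^{2}$ isolates $k(k-1)(e^{(1)}_{D_{1}D_{2}}+e^{(2)}_{D_{1}D_{2}}+e^{(4)}_{D_{1}D_{2}})=v'(v'-1)(m-1)$ and yields $e^{(4)}_{D_{1}D_{2}}$; the symmetric subtraction yields $e^{(5)}_{D_{1}D_{2}}$; the equation for $\sum_{D}i_{1,D}i_{3,D}$ gives $e^{(6)}_{D_{1}D_{2}}$ directly; back-substituting $e^{(4)}_{D_{1}D_{2}}$ and $e^{(6)}_{D_{1}D_{2}}$ into the equation for $\sum_{D}i_{1,D}$ gives $e^{(7)}_{D_{1}D_{2}}$, the symmetric step gives $e^{(8)}_{D_{1}D_{2}}$, and the counting identity gives $e^{(9)}_{D_{1}D_{2}}$. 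The main obstacle is purely computational: the parameter-free terms of $e^{(7)}_{D_{1}D_{2}}$, $e^{(8)}_{D_{1}D_{2}}$ and especially $e^{(9)}_{D_{1}D_{2}}$ require some rearrangement to match the stated closed forms. For example, verifying $e^{(7)}_{D_{1}D_{2}}$ reduces, after clearing denominators by $(k-1)/v'$, to the identity $-k+m+v'-mkv'=-m(kv'-1)+(v'-k)$, and the analogous consolidation for $e^{(9)}_{D_{1}D_{2}}$ is longer but entirely routine.
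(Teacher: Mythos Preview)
Your proposal is correct and follows exactly the paper's own approach: write the surviving sums from Lemma~\ref{lemaj} (with $i=0$, so that the $i_{2,D}$-sums vanish) in terms of the $e^{(i)}_{D_1D_2}$, adjoin the count $\sum_{i=1}^{9}e^{(i)}_{D_1D_2}=m\frac{v(v-1)}{v'(v'-1)}-2$, and invert the linear system. Your write-up is in fact more explicit than the paper's, which simply says ``inverting the linear system we obtain the result''; the concrete elimination order you describe (difference of $\sum i_{1,D}^2$ and $\sum i_{1,D}$ to get $e^{(4)}$, etc.) is correct and helpful.
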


\begin{proof}
Since $D_{1},D_{2}$ are disjoint, this time the sums in \eqref{lemaj2} and \eqref{lemaj12} are zero; we compute the other sums in Lemma~\ref{lemaj} in the same way as in Propositions~\ref{prsyst1}-\ref{prsystk}. We add the following:
\begin{equation*}
m\frac{v(v-1)}{v'(v'-1)}-2 \ = \ \sum_{i=1}^{9}e^{(i)}_{D_{1}D_{2}}
\end{equation*}
to the list of equations, and inverting the linear system we obtain the result.
\end{proof}

For every $i$, we call $a^{(i)}$ (resp. $c^{(i)}$, $e^{(i)}$) the mean of all $a^{(i)}_{D_{1}D_{2}}$ (resp. $c^{(i)}_{D_{1}D_{2}}$, $e^{(i)}_{D_{1}D_{2}}$); in the notation of equations \eqref{lewdmbeasyk}-\eqref{lewdmbeasy1}-\eqref{lewdmbeasy0}, this signifies:
\begin{eqnarray}
\sum_{|D_{1}\cap D_{2}|=1}a^{(i)}_{D_{1}D_{2}} & = & nI_{1}a^{(i)} \label{meana} \\
\sum_{|D_{1}\cap D_{2}|=k}c^{(i)}_{D_{1}D_{2}} & = & nI_{k}c^{(i)} \label{meanc} \\
\sum_{|D_{1}\cap D_{2}|=0}e^{(i)}_{D_{1}D_{2}} & = & nI_{0}e^{(i)} \label{meane}
\end{eqnarray}
While computing the means, for every pair $(D_{1},D_{2})$ we count also $(D_{2},D_{1})$, so that we have some obvious equalities:
\begin{align}
a^{(3)}=& \ a^{(4)} & c^{(3)}=& \ c^{(4)} & e^{(2)}=& \ e^{(3)} \nonumber \\
a^{(5)}=& \ a^{(6)} & c^{(5)}=& \ c^{(6)} & e^{(4)}=& \ e^{(5)} \nonumber \\
a^{(7)}=& \ a^{(8)} & c^{(7)}=& \ c^{(8)} & e^{(7)}=& \ e^{(8)} \nonumber \\
a^{(10)}=& \ a^{(11)} & c^{(10)}=& \ c^{(11)} & & \label{equals}
\end{align}
From now on, while talking about the means we only consider the ones on the LHS of these equalities. In particular, notice that some of the means that correspond to the variables in Propositions~\ref{prsyst1}-\ref{prsystk}-\ref{prsyst0} are equal to each other; while considering the BIBD as a whole, as we are about to do, we need fewer free parameters.

\begin{proposition}\label{prsyst}
Let $V$ be a $(v,k,1)$-BIBD that has well-distributed minimal $(v',k,1)$-sub-BIBDs; suppose also that among pairs of such subdesigns there exist intersections of size $0$, $1$ and $k$. Then:
\small
\begin{eqnarray*}
a^{(5)} & = & \frac{(m-1)(v'-1)(v'-k)}{k(k-1)}-a^{(1)}-a^{(3)} \\
a^{(7)} & = & \frac{(m-1)(v'-1)}{k-1}-a^{(2)} \\
a^{(9)} & = & m(v'-1)^{2}-k^{2}a^{(1)}-(k-1)^{2}a^{(2)}-2ka^{(3)} \\
a^{(10)} & = & m\left(v-1-\frac{k(v'-1)^{2}}{k-1}\right)+\frac{(v'-1)(v'-k)}{k-1}+ \\
 & & + \ k^{2}a^{(1)}+(k-1)^{2}a^{(2)}+(2k-1)a^{(3)} \\
a^{(12)} & = & m\left(\frac{v-1}{v'-1}-2\frac{v'-1}{k-1}\right)+2\frac{v'-k}{k-1}+a^{(2)} \\
a^{(13)} & = & m\left(\frac{v(v-1)}{v'(v'-1)}+(v'-1)^{2}+\frac{v-1}{v'-1}-2v'\left(\frac{v-1}{v'-1}-\frac{v'-1}{k}\right)\right)- \\
 & & - \ \frac{2(v'-1)(v'-k)}{k}-(k^{2}-1)a^{(1)}-(k-1)^{2}a^{(2)}-2(k-1)a^{(3)} \\
c^{(1)} & = & \frac{v'-k}{k}\left(m\frac{v'-k^{2}}{k}+k-1\right)+\frac{I_{1}}{k^{2}I_{k}}\left(-2ka^{(1)}+(k-1)^{2}a^{(2)}-a^{(3)}\right) \\
c^{(2)} & = & \frac{k(m-1)(v'-k)}{k-1}-\frac{I_{1}}{I_{k}}a^{(2)} \\
c^{(3)} & = & \frac{I_{1}}{I_{k}}a^{(1)} \\
c^{(5)} & = & \frac{(m-k)(v'-k)^{2}}{k^{2}(k-1)}+\frac{I_{1}}{k^{2}I_{k}}\left(-k(k-2)a^{(1)}-(k-1)^{2}a^{(2)}+a^{(3)}\right) \\
c^{(7)} & = & \frac{I_{1}}{I_{k}}a^{(2)} \\
c^{(9)} & = & \frac{I_{1}}{I_{k}}a^{(3)} \\
c^{(10)} & = & I_{1}\left(\frac{v'-k}{v'}-\frac{1}{I_{k}}\left(a^{(1)}+a^{(3)}\right)\right) \\
c^{(12)} & = & I_{1}\left(\frac{k}{v'}-\frac{1}{I_{k}}a^{(2)}\right) \\
c^{(13)} & = & m\left(1-k+\frac{v(v-1)}{v'(v'-1)}+\frac{(v'-k)(v'-k^{2})}{k^{2}}+k\frac{v-1}{v'-1}-\right. \\
 & & \left.-2v'\left(\frac{v-1}{v'-1}-\frac{v'-1}{k}\right)\right)-\frac{(2v'-k-1)(v'-k)}{k} +\\
 & & +\frac{(k-1)I_{1}}{k^{2}I_{k}}\left(2ka^{(1)}+(k-1)a^{(2)}+(k+1)a^{(3)}\right) \\
e^{(1)} & = & \frac{(m-k)(v'-k)^{2}I_{k}}{k^{2}(k-1)I_{0}}+\frac{I_{1}}{k^{2}I_{0}}\left(-k(k-2)a^{(1)}-(k-1)^{2}a^{(2)}+a^{(3)}\right) \\
e^{(2)} & = & \frac{I_{1}}{I_{0}}\left(\frac{(m-1)(v'-1)(v'-k)}{k(k-1)}-a^{(1)}-a^{(3)}\right) \\
e^{(4)} & = & I_{k}\left(1-\frac{v'-k}{I_{0}}\left(m\left(\frac{v-1}{v'-1}-\frac{v'-1}{k-1}+\frac{v'-k}{k^{2}(k-1)}\right)+\frac{v'-k}{k}\right)\right)+ \\ & & + \ \frac{(k-1)I_{1}}{k^{2}I_{0}}\left(2ka^{(1)}+(k-1)a^{(2)}+(k+1)a^{(3)}\right) \\
e^{(6)} & = & mv'^{2}-\frac{k(v'-k)I_{k}}{I_{0}}\left(m\left(2\frac{v-1}{v'-1}-2\frac{v'-1}{k-1}+\frac{v'-k}{k(k-1)}\right)+\frac{v'-k}{k-1}\right)+ \\ & & + \ \frac{I_{1}}{I_{0}}\left(k^{2}a^{(1)}+(k-1)^{2}a^{(2)}+(2k-1)a^{(3)}\right) \\
e^{(7)} & = & \frac{(v'-k)I_{k}}{I_{0}}\left(m\left(\frac{(2k-1)(v-1)}{v'-1}-2v'+1\right)+v'-k\right)+mv'\left(\frac{v-1}{v'-1} \ - \right. \\ & & - \left. \frac{kv'-1}{k-1}\right)+\frac{v'(v'-k)}{k-1}-\frac{(k-1)I_{1}}{I_{0}}\left((k+1)a^{(1)}+(k-1)a^{(2)}+2a^{(3)}\right) \\
e^{(9)} & = & m\left(\frac{v(v-1)}{v'(v'-1)}+v'^{2}-2v'\left(\frac{v-1}{v'-1}-\frac{v'-1}{k}\right)\right)-\frac{2(v'-1)(v'-k)}{k}- \\ & & - \ \frac{(k-1)(v'-k)I_{k}}{I_{0}}\left(m\left(2\frac{v-1}{v'-1}-\frac{(2k+1)v'-k}{k^{2}}\right)+\frac{v'-k}{k}\right)+ \\ & & + \ \frac{(k-1)^{2}I_{1}}{k^{2}I_{0}}\left(k(k+2)a^{(1)}+(k^{2}-1)a^{(2)}+(2k+1)a^{(3)}\right)
\end{eqnarray*}
\normalsize
where $I_{k},I_{1},I_{0}$ are as in \eqref{lewdmbeasyk}-\eqref{lewdmbeasy1}-\eqref{lewdmbeasy0} respectively and do not depend on $a^{(1)},a^{(2)},a^{(3)}$.
\end{proposition}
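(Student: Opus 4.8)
The statement bundles three claims. The first is that the averaged versions of Propositions~\ref{prsyst1}, \ref{prsystk}, \ref{prsyst0} survive the passage to means: each of those propositions is a linear identity valid for a single pair $(D_{1},D_{2})$ of the relevant intersection size, so averaging over all such pairs (via \eqref{meana}--\eqref{meane}) preserves the identity. Applying this to Proposition~\ref{prsyst1} and invoking the equalities $a^{(3)}=a^{(4)}$, $a^{(7)}=a^{(8)}$, $a^{(10)}=a^{(11)}$ of \eqref{equals} yields the six stated formulas for $a^{(5)},a^{(7)},a^{(9)},a^{(10)},a^{(12)},a^{(13)}$ verbatim, leaving $a^{(1)},a^{(2)},a^{(3)}$ as the only free $a$-parameters. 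The same averaging of Propositions~\ref{prsystk} and \ref{prsyst0}, again with \eqref{equals}, expresses every $c$-mean through $c^{(1)},c^{(2)},c^{(3)}$ and every $e$-mean through $e^{(1)},e^{(2)}$. Thus the whole content reduces to pinning these five quantities down in terms of $a^{(1)},a^{(2)},a^{(3)}$ and the constants $I_{0},I_{1},I_{k}$.

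The crux is a \emph{triple-symmetry} double count. Given three pairwise-distinct subdesigns, their three pairwise intersection sizes (each in $\{0,1,k\}$) together with the triple-intersection size form an unordered \emph{intersection type}. Each of the sums $\sum a^{(i)}_{D_{1}D_{2}}$, $\sum c^{(i)}_{D_{1}D_{2}}$, $\sum e^{(i)}_{D_{1}D_{2}}$ counts ordered triples $(D_{1},D_{2},D)$ of a prescribed type in which $(D_{1},D_{2})$ is the distinguished base pair, and $I_{1},I_{k},I_{0}$ from \eqref{lewdmbeasy1}, \eqref{lewdmbeasyk}, \eqref{lewdmbeasy0} govern how many base pairs of each size there are. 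The point is that one and the same unordered type can be read off from whichever pair of the triple is taken as base: a pair of intersection size $1$ feeds an $a$-sum, one of size $k$ feeds a $c$-sum, one of size $0$ feeds an $e$-sum. Counting the triples of a fixed type from two base-pair viewpoints, and observing that a repeated pairwise size forces the \emph{same} multiplicity (e.g. the factor $2$) on both sides so that it cancels along with the common factor $n$, produces clean relations $I_{1}a^{(\cdot)}=I_{k}c^{(\cdot)}$, $I_{1}a^{(\cdot)}=I_{0}e^{(\cdot)}$, or $I_{k}c^{(\cdot)}=I_{0}e^{(\cdot)}$. Concretely, the types $\{1,k,k\}$ with triple size $0$ and $1$ give $c^{(3)}=\tfrac{I_{1}}{I_{k}}a^{(1)}$ and $c^{(7)}=\tfrac{I_{1}}{I_{k}}a^{(2)}$; the type $\{1,1,k\}$ with triple size $0$ gives $c^{(9)}=\tfrac{I_{1}}{I_{k}}a^{(3)}$; the type $\{0,1,k\}$ gives simultaneously $c^{(10)}=\tfrac{I_{1}}{I_{k}}a^{(5)}$ and $e^{(2)}=\tfrac{I_{1}}{I_{0}}a^{(5)}$; and the $k$-heavy type $\{0,k,k\}$ gives $e^{(1)}=\tfrac{I_{k}}{I_{0}}c^{(5)}$. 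Five of these, namely those for $c^{(3)},c^{(7)},c^{(9)},e^{(2)},e^{(1)}$, suffice; the remaining type identities over-determine the system and merely serve as consistency checks.

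With these in hand the assembly is mechanical. The relations for $c^{(3)}$ and $c^{(7)}$ give $c^{(3)}$ directly and, through the averaged Proposition~\ref{prsystk} equation for $c^{(7)}$, give $c^{(2)}$; substituting $c^{(9)}=\tfrac{I_{1}}{I_{k}}a^{(3)}$ together with $c^{(2)},c^{(3)}$ into the averaged Proposition~\ref{prsystk} equation for $c^{(9)}$ solves for $c^{(1)}$. The averaged Proposition~\ref{prsystk} then delivers $c^{(5)},c^{(10)},c^{(12)},c^{(13)}$, after which $e^{(1)}=\tfrac{I_{k}}{I_{0}}c^{(5)}$ and $e^{(2)}=\tfrac{I_{1}}{I_{0}}a^{(5)}$ feed the averaged Proposition~\ref{prsyst0} to produce all remaining $e$-means. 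I expect the real difficulty to be twofold. First, organizing the enumeration of intersection types so that multiplicities and the mirrored pairs in \eqref{equals} are tracked without error; second, the final simplification, which demands substituting the explicit closed forms of $I_{1},I_{k},I_{0}$ — for instance the identity $I_{k}=\tfrac{v'(v'-1)(m-1)}{k(k-1)}$ is exactly what turns $\tfrac{I_{1}}{I_{k}}a^{(5)}$ into the printed shape $I_{1}\!\left(\tfrac{v'-k}{v'}-\tfrac{1}{I_{k}}(a^{(1)}+a^{(3)})\right)$ of $c^{(10)}$ — to match the statement line by line. The linear algebra is non-singular precisely because the hypothesis that intersections of all three sizes $0,1,k$ occur makes $I_{0},I_{1},I_{k}$ nonzero, legitimizing every division above.
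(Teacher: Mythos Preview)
Your proposal is correct and follows essentially the same route as the paper: average the three linear systems of Propositions~\ref{prsyst1}--\ref{prsyst0}, derive the cross-relations $I_{1}a^{(\cdot)}=I_{k}c^{(\cdot)}$, $I_{1}a^{(\cdot)}=I_{0}e^{(\cdot)}$, $I_{k}c^{(\cdot)}=I_{0}e^{(\cdot)}$ by counting ordered triples of subdesigns from different base-pair viewpoints, and solve the resulting linear system using $I_{0},I_{1},I_{k}>0$. Your treatment of the multiplicity cancellation and your explicit identification of which five cross-relations suffice are, if anything, more careful than the paper's own sketch.
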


\begin{proof}
We start with the statement of Propositions~\ref{prsyst1}-\ref{prsystk}-\ref{prsyst0}: since all the expressions are linear, the same relations hold for the means $a^{(i)},c^{(i)},e^{(i)}$. To these, we add some new equalities that relate the three systems to each other: in fact, notice that while computing means for every triple $(D_{1},D_{2},D_{3})$ with certain intersection sizes we count also all possible permutations of that triple in the appropriate permutations of such intersection sizes; for example, using \eqref{meana} and \eqref{meanc}:
\small
\begin{eqnarray*}
nI_{1}a^{(1)} & = & \sum_{|D_{1}\cap D_{2}|=1}a^{(1)}_{D_{1}D_{2}} \ = \ \left|\left\{(D_{1},D_{2},D_{3})\in\mathcal{D}^{3}\left|\begin{array}{l}|D_{1}\cap (D_{2}\setminus D_{3})|=1, \\ |D_{3}\cap (D_{1}\setminus D_{2})|=k, \\ |D_{3}\cap (D_{2}\setminus D_{1})|=k, \\ |D_{1}\cap D_{2}\cap D_{3}|=0 \end{array} \right.\right\}\right|=\\
 & = & \sum_{|D_{3}\cap D_{1}|=k}c^{(3)}_{D_{3}D_{1}} \ = \ nI_{k}c^{(3)}
\end{eqnarray*}
\normalsize
hence $I_{1}a^{(1)}=I_{k}c^{(3)}$. In general we obtain, using \eqref{meana}-\eqref{meanc}-\eqref{meane} and the same reasoning as above:
\begin{align*}
I_{1}a^{(1)}= & \ I_{k}c^{(3)} & I_{1}a^{(2)}= & \ I_{k}c^{(7)} & I_{1}a^{(3)}= & \ I_{k}c^{(9)} \\
I_{1}a^{(5)}= & \ I_{k}c^{(10)}=I_{0}e^{(2)} & I_{1}a^{(7)}= & \ I_{k}c^{(12)} & I_{1}a^{(10)}= & \ I_{0}e^{(6)} \\
I_{1}a^{(13)}= & \ I_{0}e^{(7)} & I_{k}c^{(5)}= & \ I_{0}e^{(1)} & I_{k}c^{(13)}= & \ I_{0}e^{(4)}
\end{align*}
Of course, some of these are redundant, as are the equalities in \eqref{equals}; we add as many as we need to the systems coming from the propositions and we obtain the result (since all three intersection sizes appear in the BIBD, the three quantities $I_{0},I_{1},I_{k}$ are all $>0$).
\end{proof}

It is clear that all the quantities in Proposition~\ref{prsyst} are $\geq 0$. There are also results that give upper bounds for some of them, including notably the three free variables $a^{(1)},a^{(2)},a^{(3)}$.

\begin{proposition}\label{prupper}
Let $V$ be a $(v,k,1)$-BIBD that has well-distributed minimal $(v',k,1)$-sub-BIBDs.

For any $D_{1},D_{2}\in\mathcal{D}$ with intersection size $1$:
\begin{eqnarray}
a^{(1)}_{D_{1}D_{2}} & \leq & \frac{(v'-1)(v'-k)}{k(k-1)}\left\lfloor\frac{v'-1}{k}\right\rfloor \label{pruppera1} \\
a^{(2)}_{D_{1}D_{2}} & \leq & \frac{(v'-1)^{2}}{(k-1)^{2}} \label{pruppera2} \\
a^{(3)}_{D_{1}D_{2}} & \leq & \frac{(v'-1)^{2}(v'-k)}{k(k-1)} \label{pruppera3}
\end{eqnarray}

For any $D_{1},D_{2}\in\mathcal{D}$ with intersection size $k$:
\begin{eqnarray}
c^{(1)}_{D_{1}D_{2}} & \leq & \frac{(v'-k^{2}+k-1)(v'-k)}{k(k-1)}\left\lfloor\frac{v'-k}{k}\right\rfloor \label{prupperc1} \\
c^{(2)}_{D_{1}D_{2}} & \leq & \frac{k(v'-k)^{2}}{(k-1)^{2}} \label{prupperc2} \\
c^{(3)}_{D_{1}D_{2}} & \leq & \frac{(v'-k^{2}+k-1)(v'-k)^{2}}{k(k-1)} \label{prupperc3}
\end{eqnarray}

For any $D_{1},D_{2}\in\mathcal{D}$ with intersection size $0$:
\begin{eqnarray}
e^{(1)}_{D_{1}D_{2}} & \leq & \frac{v'(v'-1)}{k(k-1)}\left\lfloor\frac{v'}{k}\right\rfloor \label{pruppere1} \\
e^{(2)}_{D_{1}D_{2}} & \leq & \frac{v'^{2}(v'-1)}{k(k-1)} \label{pruppere2}
\end{eqnarray}

In particular, the same bounds hold for their respective means $a^{(i)},c^{(i)},e^{(i)}$.
\end{proposition}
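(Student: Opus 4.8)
The plan is to prove all eight bounds by a single uniform counting scheme: fix the first subdesign $D_{1}$, associate to each relevant third subdesign $D$ the block it cuts out inside $D_{1}$, count how many such blocks can occur, and then bound how many $D$ can give rise to the \emph{same} block by inspecting their traces inside $D_{2}$. The leading factors will come from counting blocks of a prescribed type in the $(v',k,1)$-BIBD $D_{1}$, and the second factors from the interaction with $D_{2}$.

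The observation on which everything rests, and which I would isolate first, is the following consequence of minimality: if $D,D'\in\mathcal{D}$ are distinct from $D_{1},D_{2}$ and satisfy $D\cap D_{1}=D'\cap D_{1}=B_{1}$ for a single block $B_{1}$, then $D\cap D'\supseteq B_{1}$, so by Lemma~\ref{leintersbibd} the pair $D\cap D'$ is a sub-BIBD of size $\geq k$; minimality of the members of $\mathcal{D}$ (Definition~\ref{dewdmb}) forces its size to be exactly $k$, whence $D\cap D'=B_{1}$. More generally, any two members of $\mathcal{D}$ sharing $\geq k+1$ vertices must coincide. This short step is the crux of the whole proposition, since it converts ``same trace on $D_{1}$'' into precise information about the traces on $D_{2}$; everything else is bookkeeping.

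I would then treat the ``$(k,0,k)$-type'' quantities $a^{(1)},c^{(1)},e^{(1)}$ together. Here each counted $D$ meets $D_{1}$ in a block $B_{1}$ of the prescribed type — avoiding the intersection point $p$, disjoint from the intersection block $B_{0}$, or arbitrary — and meets $D_{2}$ in a block $B_{2}$. Counting the admissible $B_{1}$ inside $D_{1}$ via \eqref{lewdmbeasyb0} and Lemma~\ref{lebibdeasy}\ref{lebibdeasy2}--\ref{lebibdeasy3} produces the factors $\frac{(v'-1)(v'-k)}{k(k-1)}$, $\frac{(v'-k^{2}+k-1)(v'-k)}{k(k-1)}$, $\frac{v'(v'-1)}{k(k-1)}$. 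For a fixed $B_{1}$ the observation above gives $B_{2}\cap B_{2}'\subseteq D\cap D'\cap D_{2}=B_{1}\cap D_{2}$, and since in each case $B_{1}$ avoids the common part of $D_{1}$ and $D_{2}$ this is empty; hence the blocks $B_{2}$ are pairwise disjoint inside $D_{2}$ deprived of its overlap with $D_{1}$, a set of $v'-1$, $v'-k$, or $v'$ vertices, so there are at most $\lfloor(v'-1)/k\rfloor$, $\lfloor(v'-k)/k\rfloor$, $\lfloor v'/k\rfloor$ of them. Multiplying the two factors yields \eqref{pruppera1}, \eqref{prupperc1}, \eqref{pruppere1}.

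The remaining cases use the same skeleton with minor variations. For the ``$(k-1,1,k-1)$-type'' quantities $a^{(2)},c^{(2)}$ the block $B_{1}$ now passes through the common point: I would count blocks through $p$ (there are $\frac{v'-1}{k-1}$ by Lemma~\ref{lebibdeasy}\ref{lebibdeasy2}) or blocks meeting $B_{0}$ in one vertex (there are $\frac{k(v'-k)}{k-1}$ by \eqref{lewdmbeasyb1}); then for fixed $B_{1}$ the trace $B_{2}$ runs through blocks sharing the same point, so disjointness is automatic and instead I use injectivity of $D\mapsto B_{2}$, valid because equal traces would give $|D\cap D'|\geq 2k-1>k$. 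This produces the second factors $\frac{v'-1}{k-1}$ and $\frac{v'-k}{k-1}$, hence \eqref{pruppera2} and \eqref{prupperc2}. Finally, for the ``$(k,0,1)$-type'' quantities $a^{(3)},c^{(3)},e^{(2)}$ each $D$ meets $D_{2}$ in a single vertex $w\in D_{2}\setminus D_{1}$; the map $D\mapsto w$ is injective for fixed $B_{1}$ (equal $w$ forces $|D\cap D'|\geq k+1$), and the number of available $w$ is $v'-1$, $v'-k$, $v'$, giving \eqref{pruppera3}, \eqref{prupperc3}, \eqref{pruppere2}. Since every bound is established pointwise for an arbitrary admissible pair $(D_{1},D_{2})$, the same bounds pass immediately to the means $a^{(i)},c^{(i)},e^{(i)}$.
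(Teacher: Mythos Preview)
Your proof is correct and follows essentially the same approach as the paper: fix a block $B_{1}\subseteq D_{1}$, use minimality to force all $D$ with $D\cap D_{1}=B_{1}$ to be pairwise disjoint outside $B_{1}$, and then bound the number of admissible traces on $D_{2}$. The only cosmetic difference is organizational---you group the eight bounds by ``type'' $(k,0,k)$, $(k-1,1,k-1)$, $(k,0,1)$ and isolate the minimality observation explicitly at the outset, whereas the paper runs through the cases by intersection size $1,k,0$; the counting steps and the references to \eqref{lewdmbeasyb0}, \eqref{lewdmbeasyb1}, and Lemma~\ref{lebibdeasy} are identical.
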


\begin{proof}
Fix $D_{1},D_{2}$ with $|D_{1}\cap D_{2}|=1$; for any given block $B\subseteq D_{1}$, the sub-BIBDs $D$ such that $D\cap D_{1}=B$ are all pairwise disjoint outside $B$: in particular, for a fixed $B$ that does not contain the vertex common to $D_{1}$ and $D_{2}$ (call it $x$), there are at most $\lfloor\frac{v'-1}{k}\rfloor$ such $D$ that are counted inside $a^{(1)}_{D_{1}D_{2}}$ (i.e., that also intersect $D_{2}$ in an entire block not containing $x$). By Lemma~\ref{lebibdeasy}\ref{lebibdeasy2}-\ref{lebibdeasy}\ref{lebibdeasy3}, the number of such $B$ is $\frac{v'-1}{k-1}\left(\frac{v'}{k}-1\right)$, and \eqref{pruppera1} follows.

For a fixed $B$ containing the unique $x\in D_{1}\cap D_{2}$, all $D\supseteq B$ are pairwise disjoint in $D_{2}\setminus\{x\}$; the number of possible $B$ is $\frac{v'-1}{k-1}$ by Lemma~\ref{lebibdeasy}\ref{lebibdeasy2}, and for each $B$ the number of possible intersections counted inside $a^{(2)}_{D_{1}D_{2}}$ is again $\frac{v'-1}{k-1}$, so we obtain \eqref{pruppera2}.

For \eqref{pruppera3} the reasoning is the same as for \eqref{pruppera1}, but now the intersections in $D_{2}$ are of size $1$, so there are $v'-1$ possibilities for each $B$.

Now we fix $D_{1},D_{2}$ with $|D_{1}\cap D_{2}|=k$. The number of blocks $B\subseteq D_{1}$ disjoint from the block $D_{1}\cap D_{2}$ (call it $B_{0}$) is given by \eqref{lewdmbeasyb0}; for each of them, the sub-BIBDs intersecting $D_{1}$ in $B$ and $D_{2}$ in another block are at most $\lfloor\frac{v'-k}{k}\rfloor$ because the intersections in $D_{2}$ must be all distinct. This proves \eqref{prupperc1}.

The number of blocks $B\subseteq D_{1}$ intersecting $B_{0}$ in one vertex is given by \eqref{lewdmbeasyb1}; for each of them, there are $\frac{v'-k}{k-1}$ blocks containing $B\cap B_{0}$ and distinct from $B_{0}$ (take Lemma~\ref{lebibdeasy}\ref{lebibdeasy2} and subtract $1$ to exclude $B_{0}$), and we get \eqref{prupperc2}.

For \eqref{prupperc3} the reasoning is analogous to \eqref{prupperc1}, with $v'-k$ possible intersections of size $1$ in $D_{2}$ for each $B$.

Finally we fix $D_{1},D_{2}$ disjoint. The number of blocks inside $D_{1}$ is given in Lemma~\ref{lebibdeasy}\ref{lebibdeasy3}; for any fixed $B\subseteq D_{1}$, there are at most $\lfloor\frac{v'}{k}\rfloor$ sub-BIBDs containing $B$ and intersecting $D_{2}$ in $k$ vertices and there are at most $v'$ sub-BIBDs containing $B$ and intersecting $D_{2}$ in $1$ vertex: this gives us \eqref{pruppere1} and \eqref{pruppere2}.
\end{proof}

The results in Proposition~\ref{prupper} are nontrivial because they involve counting sub-BIBDs that are constrained to intersect two other subdesigns, and at least one of them in $k$ vertices; this strongly limits possibilities, since sub-BIBDs cannot have more than $k$ vertices in common. To make a comparison, if we were to give a similar count for $a^{(7)}$, after fixing a block $B$ in $D_{1}$ the only constraint would be that only $m-1$ sub-BIBDs can intersect $D_{1}$ in $B$; this would lead to the bound $a^{(7)}\leq\frac{(m-1)(v'-1)}{k-1}$, which is obvious from Proposition~\ref{prsyst1}: indeed $\frac{(m-1)(v'-1)}{k-1}$ does not take into any consideration what happens to $D_{2}$, or, seen from another perspective, it corresponds to the trivial bound $a^{(7)}\leq a^{(2)}+a^{(7)}$.

\begin{corollary}\label{cobounds}
Let $V$ be a $(v,k,1)$-BIBD that has well-distributed minimal $(v',k,1)$-sub-BIBDs; suppose also that among pairs of such subdesigns there exist intersections of size $0$, $1$ and $k$. Then:
\begin{equation}\label{couppera1}
a^{(1)} \leq \min\left\{ \frac{(v'-1)(v'-k)}{k(k-1)}\left\lfloor\frac{v'-1}{k}\right\rfloor, \frac{I_{k}}{I_{1}}\frac{(v'-k^{2}+k-1)(v'-k)^{2}}{k(k-1)} \right\}
\end{equation}
\begin{equation}\label{colowera2}
a^{(2)} \geq \max\left\{ 0, \frac{I_{k}}{I_{1}}\frac{k(v'-k)}{k-1}\left(m-\frac{v'-1}{k-1}\right) \right\}
\end{equation}
\begin{equation}\label{couppera2}
a^{(2)} \leq \min\left\{\frac{(v'-1)^{2}}{(k-1)^{2}}, \frac{(m-1)(v'-1)}{k-1}, \frac{I_{k}}{I_{1}}\frac{k(v'-k)(m-1)}{k-1} \right\}
\end{equation}
\begin{equation}\label{colowera1a3}
a^{(1)}+a^{(3)} \geq \max\left\{ 0, \frac{v'-1}{k(k-1)}\left((m-1)(v'-k)-\frac{I_{0}}{I_{1}}v'^{2}\right) \right\}
\end{equation}
\begin{eqnarray}\label{couppera1a3}
a^{(1)}+a^{(3)} & \leq & \frac{(v'-1)(v'-k)}{k(k-1)}\min\left\{ \left\lfloor\frac{v'-1}{k}\right\rfloor+v'-1, \right.\nonumber \\
 & & \left. \frac{I_{k}}{I_{1}}\frac{(v'-k^{2}+k-1)(v'-k)}{v'-1}+v'-1,m-1 \right\}
\end{eqnarray}
\end{corollary}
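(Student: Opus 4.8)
The plan is to derive every inequality by combining three ingredients already established: the nonnegativity of all the counting means $a^{(i)},c^{(i)},e^{(i)}$ (each being an average of cardinalities); the direct upper bounds of Proposition~\ref{prupper}; and the linear identities among the means collected in Proposition~\ref{prsyst}, together with the cross-system equalities of the form $I_{1}a^{(i)}=I_{k}c^{(j)}=I_{0}e^{(l)}$ derived in its proof. No new combinatorics is required here: each target bound is produced by selecting the appropriate identity and substituting the appropriate estimate, and the hypothesis that all three intersection sizes occur guarantees $I_{0},I_{1},I_{k}>0$ so that dividing by them is legitimate.

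For the single-variable upper bounds I would argue as follows. The first entry of \eqref{couppera1} is exactly \eqref{pruppera1}; for the second entry I invert $c^{(3)}=\frac{I_{1}}{I_{k}}a^{(1)}$ to write $a^{(1)}=\frac{I_{k}}{I_{1}}c^{(3)}$ and then apply \eqref{prupperc3}. For \eqref{couppera2}, the first entry is \eqref{pruppera2}, the second follows at once from $a^{(7)}=\frac{(m-1)(v'-1)}{k-1}-a^{(2)}\geq 0$, and the third from $a^{(2)}=\frac{I_{k}}{I_{1}}c^{(7)}$ combined with $c^{(7)}=\frac{k(m-1)(v'-k)}{k-1}-c^{(2)}\leq\frac{k(m-1)(v'-k)}{k-1}$, where the last inequality uses $c^{(2)}\geq 0$.

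For the lower bound \eqref{colowera2}, besides the trivial $a^{(2)}\geq 0$, I use the identity $c^{(2)}=\frac{k(m-1)(v'-k)}{k-1}-\frac{I_{1}}{I_{k}}a^{(2)}$ from Proposition~\ref{prsyst} together with the upper bound \eqref{prupperc2} on $c^{(2)}$; solving for $a^{(2)}$ gives $\frac{I_{1}}{I_{k}}a^{(2)}\geq\frac{k(m-1)(v'-k)}{k-1}-\frac{k(v'-k)^{2}}{(k-1)^{2}}$, and the only nonroutine step is the elementary simplification $(m-1)-\frac{v'-k}{k-1}=m-\frac{v'-1}{k-1}$, after which the stated bound appears.

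For the bounds on $a^{(1)}+a^{(3)}$ the key device is the identity $a^{(5)}=\frac{(m-1)(v'-1)(v'-k)}{k(k-1)}-(a^{(1)}+a^{(3)})$ from Proposition~\ref{prsyst}. The lower bound \eqref{colowera1a3} comes from an upper estimate on $a^{(5)}$, obtained via $e^{(2)}=\frac{I_{1}}{I_{0}}a^{(5)}$ and \eqref{pruppere2}; I then solve for $a^{(1)}+a^{(3)}$ and factor out $\frac{v'-1}{k(k-1)}$. For the upper bound \eqref{couppera1a3}, after factoring out $\frac{(v'-1)(v'-k)}{k(k-1)}$ I combine the two available bounds on $a^{(1)}$ (the two entries of \eqref{couppera1}) with $a^{(3)}\leq\frac{(v'-1)^{2}(v'-k)}{k(k-1)}$ from \eqref{pruppera3} to obtain the first two entries, while the third entry is immediate from $a^{(5)}\geq 0$. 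I expect the only genuine difficulty to be the bookkeeping — matching each target to the correct identity and carrying out the elementary factorizations — rather than any conceptual obstacle.
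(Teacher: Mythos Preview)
Your proposal is correct and follows essentially the same approach as the paper: each bound is obtained by pairing an identity from Proposition~\ref{prsyst} (or the cross-relations $I_{1}a^{(i)}=I_{k}c^{(j)}=I_{0}e^{(l)}$) with either a nonnegativity constraint or one of the explicit upper bounds from Proposition~\ref{prupper}. The only cosmetic difference is your route to the third term in \eqref{couppera2} via $c^{(7)}$ rather than directly via $c^{(2)}\geq 0$, but since $c^{(2)}+c^{(7)}=\frac{k(m-1)(v'-k)}{k-1}$ these are the same argument.
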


\begin{proof}
In \eqref{couppera1}, the first term inside the RHS is just \eqref{pruppera1} while the second is obtained from \eqref{prupperc3} and the equality for $c^{(3)}$ in Proposition~\ref{prsyst}. Since every $a^{(2)}_{D_{1}D_{2}}$ is a nonnegative integer, $a^{(2)}$ is also nonnegative; the second term in \eqref{colowera2} comes from \eqref{prupperc2} and from $c^{(2)}$ in Proposition~\ref{prsyst}. The first term inside \eqref{couppera2} is simply \eqref{pruppera2}, the second and the third come from the nonnegativity of $a^{(7)},c^{(2)}$ and their expressions in Proposition~\ref{prsyst}. The sum $a^{(1)}+a^{(3)}$ is of course $\geq 0$ and the second lower bound is consequence of \eqref{pruppere2} and of the equality for $e^{(2)}$ in Proposition~\ref{prsyst}, thus giving us \eqref{colowera1a3}. Finally, the first two terms of \eqref{couppera1a3} come from summing \eqref{couppera1} and \eqref{pruppera3}, while the third comes from $a^{(5)}\geq 0$ and its expression in Proposition~\ref{prsyst}.
\end{proof}

Now we give some inequalities involving $m$. It is easy to already give a bound using only the fact that intersections of sub-BIBDs are of size at most $k$; knowing this, given a block $B\in\mathcal{B}$ all the subdesigns containing it are pairwise disjoint outside $B$, so that:
\begin{equation}\label{mvvp}
v\geq k+m(v'-k) \ \ \ \Longrightarrow \ \ \ m\leq\frac{v-k}{v'-k}
\end{equation}
We observe that the same bound appears also in \cite[Thm. 6]{DEF78} written in the form $n\leq\frac{v(v-1)(v-k)}{v'(v'-1)(v'-k)}$, which holds only for $v,n$ large enough but in more generality than just in Steiner systems like our $V$ (in turn, that theorem is an improvement of \cite[Thm. 3]{RCW75}).

It is possible to improve \eqref{mvvp} in some ranges. First, we give a result that allows us to naturally distinguish between a ``small'' $v$ and a ``large'' $v$ (with respect to $v'$; remember that by Lemma~\ref{lesubboundv} we already know that the ratio $\frac{v}{v'}$ is at least the order of magnitude of $k$).

\begin{lemma}\label{levsmalllarge}
Let $V$ be a $(v,k,1)$-BIBD with well-distributed minimal $(v',k,1)$-sub-BIBDs. Suppose that $k\geq 4$; then we have one of the two following possibilities:
\begin{enumerate}[(a)]
\item $v<\frac{1}{2}\left(1-\sqrt{1-\frac{4}{k}}\right)v'^{2}+\frac{1}{2}$ (i.e. $v$ is small);
\item $v>\frac{1}{2}\left(1+\sqrt{1-\frac{4}{k}}\right)v'^{2}+\frac{1}{2}$ (i.e. $v$ is large).
\end{enumerate}
\end{lemma}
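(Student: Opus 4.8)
The plan is to recognize that the stated dichotomy is exactly the condition that a single quadratic in $v$ be strictly positive, and then to prove that positivity using the nonnegativity of the disjoint-pair count $I_{0}$ from Lemma~\ref{lewdmbeasy}. First I would complete the square: since $k\geq 4$ the quantity $\frac{v'^{2}}{2}\sqrt{1-\frac{4}{k}}$ is real and nonnegative, and the two alternatives (a)--(b) are precisely the two halves of $|v-\frac{v'^{2}+1}{2}|>\frac{v'^{2}}{2}\sqrt{1-\frac{4}{k}}$, i.e.
\begin{equation*}
\left(v-\frac{v'^{2}+1}{2}\right)^{2}>\frac{v'^{4}}{4}\left(1-\frac{4}{k}\right).
\end{equation*}
Using the identity $\frac{v'^{4}}{4}-\left(v-\frac{v'^{2}+1}{2}\right)^{2}=\left(v-\frac{1}{2}\right)\left(v'^{2}-v+\frac{1}{2}\right)$, this is equivalent to
\begin{equation*}
\left(v-\tfrac{1}{2}\right)\left(v-v'^{2}-\tfrac{1}{2}\right)>-\frac{v'^{4}}{k}. \tag{$\ast$}
\end{equation*}
So the whole lemma is equivalent to $(\ast)$, and it suffices to establish this one inequality.

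The crucial input is that $I_{0}\geq 0$, which is immediate because $I_{0}$ counts the sub-BIBDs disjoint from a fixed one. Writing the formula \eqref{lewdmbeasy0} as $I_{0}=mC-\frac{(v'-1)(v'-k)}{k}$ with $C=\frac{(v-1)(v-v'^{2})}{v'(v'-1)}+\frac{v'(v'-1)}{k}$, I would observe that $\frac{(v'-1)(v'-k)}{k}>0$ (minimal sub-BIBDs satisfy $v'>k$ by Definition~\ref{dewdmb}) and $m\geq 1$, so $mC>0$ forces $C>0$. The point is that this sign argument eliminates $m$ entirely, collapsing $I_{0}\geq 0$ into the $m$-free bound
\begin{equation*}
(v-1)(v-v'^{2})>-\frac{v'^{2}(v'-1)^{2}}{k},
\end{equation*}
whose threshold depends only on $v,v',k$, exactly as the lemma requires.

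To finish, I would pass from this bound to $(\ast)$ via the elementary identity $\left(v-\frac{1}{2}\right)\left(v-v'^{2}-\frac{1}{2}\right)=(v-1)(v-v'^{2})-\frac{v'^{2}}{2}+\frac{1}{4}$. Substituting the previous display reduces $(\ast)$ to the purely arithmetic inequality $\frac{v'^{2}(2v'-1)}{k}\geq\frac{v'^{2}}{2}-\frac{1}{4}$, which holds with enormous slack because $v'>k\geq 4$ gives $\frac{2v'-1}{k}>\frac{1}{2}$. The main obstacle here is conceptual rather than computational: one must notice that $I_{0}\geq 0$ degenerates to the $m$-independent estimate $C>0$, and then check that the genuine quadratic coming from $C>0$ is in fact strong enough to imply the cleaner quadratic $(\ast)$ that the lemma phrases in closed form; once these two observations are in place, the remaining verification is a routine comparison of polynomials in $v'$ and $k$.
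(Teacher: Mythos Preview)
Your proof is correct and follows essentially the same approach as the paper: both deduce from $I_{0}\geq 0$ the $m$-free inequality $C=\frac{(v-1)(v-v'^{2})}{v'(v'-1)}+\frac{v'(v'-1)}{k}>0$ and then argue that the resulting quadratic in $v$ forces the stated gap. The paper simply writes the quadratic $v^{2}-(v'^{2}+1)v+v'^{2}\bigl(1+\frac{(v'-1)^{2}}{k}\bigr)>0$ and says one may ``comfortably forget about the lower degree terms'' in the discriminant using $v'\geq k^{2}-k+1$, whereas you make that comparison explicit via the identity $(v-\tfrac12)(v-v'^{2}-\tfrac12)=(v-1)(v-v'^{2})-\tfrac{v'^{2}}{2}+\tfrac14$ and the check $\frac{v'^{2}(2v'-1)}{k}>\frac{v'^{2}}{2}-\frac14$; this is the same argument, just carried out more carefully.
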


\begin{proof}
We use the fact that $I_{0}$ as defined in \eqref{lewdmbeasy0} must be $\geq 0$. In particular we have:
\begin{equation*}
\frac{(v-1)(v-v'^{2})}{v'(v'-1)}+\frac{v'(v'-1)}{k}>0
\end{equation*}
We solve then $v^{2}-(v'^{2}+1)v+v'^{2}\left(1+\frac{(v'-1)^{2}}{k}\right)>0$; in the discriminant, we can comfortably forget about the lower degree terms (in $v'$) since by Proposition~\ref{prfisher} we have $v'\geq k^{2}-k+1$, and we obtain the result ($k\geq 4$ makes the discriminant positive).
\end{proof}

This means that $k\lesssim\frac{v}{v'}\lesssim\frac{v'}{k}$ when $v$ is small and $\frac{v}{v'}\gtrsim v'$ when $v$ is large (we use $\lesssim$ to indicate inequalities up to lower degree terms and/or for large values of the parameters, offering possibly a clearer intuition in exchange for a loss of precision in our assertions).

It turns out that when $v$ is small $m$ tends to be considerably smaller than the bound given in \eqref{mvvp}.

\begin{lemma}\label{leminvsmall}
Let $V$ be a $(v,k,1)$-BIBD with well-distributed minimal $(v',k,1)$-sub-BIBDs. Suppose that $v<\frac{(v'-1)^{2}}{k-1}+1$; then:
\begin{equation*}
m\leq\frac{\frac{v'-k}{k-1}}{\frac{v'-1}{k-1}-\frac{v-1}{v'-1}}
\end{equation*}
\end{lemma}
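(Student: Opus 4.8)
The plan is to read off the result directly from equation~\eqref{lewdmbeasyx} in Lemma~\ref{lewdmbeasy}, which I expect to be the only ingredient needed. That equation records that, for a fixed minimal sub-BIBD $V_{1}$ and a fixed vertex $x\in V_{1}$, the number of sub-BIBDs meeting $V_{1}$ in exactly $\{x\}$ equals
\begin{equation*}
m\left(\frac{v-1}{v'-1}-\frac{v'-1}{k-1}\right)+\frac{v'-k}{k-1}.
\end{equation*}
The crucial point is that this is a cardinality, hence a nonnegative integer, so the whole expression is $\geq 0$ regardless of the other parameters.

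First I would note that the hypothesis $v<\frac{(v'-1)^{2}}{k-1}+1$ is exactly the statement that the coefficient of $m$ above is \emph{negative}: indeed, $v-1<\frac{(v'-1)^{2}}{k-1}$ is equivalent (dividing by $v'-1>0$) to $\frac{v-1}{v'-1}<\frac{v'-1}{k-1}$, i.e.\ to $\frac{v'-1}{k-1}-\frac{v-1}{v'-1}>0$. Then I would rearrange the inequality $m\left(\frac{v-1}{v'-1}-\frac{v'-1}{k-1}\right)+\frac{v'-k}{k-1}\geq 0$ by moving the $m$-term to the right, obtaining
\begin{equation*}
m\left(\frac{v'-1}{k-1}-\frac{v-1}{v'-1}\right)\leq\frac{v'-k}{k-1},
\end{equation*}
and finally divide both sides by the positive quantity $\frac{v'-1}{k-1}-\frac{v-1}{v'-1}$ (positive precisely by the hypothesis), which yields the claimed bound without any change of inequality direction.

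There is essentially no obstacle here: the content of the lemma is the recognition that the threshold in the hypothesis is the exact point where the sign of the $m$-coefficient in \eqref{lewdmbeasyx} flips, so that the trivial nonnegativity constraint on a counting quantity becomes an upper bound on $m$ rather than a lower bound. The only point demanding a line of care is confirming that the hypothesis guarantees the denominator is strictly positive, so that the division is legitimate and the inequality is preserved; this is immediate from the equivalence noted above. It is also worth recording, as a sanity check against \eqref{mvvp}, that this bound is the promised improvement in the small-$v$ regime, since in that range $\frac{v-1}{v'-1}$ is close to $\frac{v'-1}{k-1}$ and the denominator is small, pinning $m$ well below $\frac{v-k}{v'-k}$.
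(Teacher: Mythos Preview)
Your proof is correct and is exactly the paper's own argument: the nonnegativity of the count in \eqref{lewdmbeasyx}, combined with the observation that the hypothesis makes the coefficient of $m$ negative, immediately yields the bound after rearranging. One small quibble: your closing ``sanity check'' is backwards---when $\frac{v-1}{v'-1}$ is close to $\frac{v'-1}{k-1}$ the denominator is small and the bound is \emph{weak}; the improvement over \eqref{mvvp} kicks in as $v$ decreases further below the threshold (cf.\ the paper's remark that $v=\frac{(v'-1)^{2}}{2(k-1)}$ already forces $m=1$).
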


\begin{proof}
This is an immediate consequence of the nonnegativity of \eqref{lewdmbeasyx} and of the fact that with this condition on $v$ the coefficient of $m$ is negative.
\end{proof}

Actually, ``$v$ small'' as defined in Lemma~\ref{levsmalllarge} is slightly wider than the condition required in Lemma~\ref{leminvsmall}, but the overlap is almost complete (both have an extreme of magnitude $\frac{v'^{2}}{k}$). What is interesting is that in this case $m$ drops very fast with the decrease of $v$: already $v=\frac{(v'-1)^{2}}{2(k-1)}$ forces $m$ to be $1$ (it gives $m<2$, and $m$ is an integer).

\section{The graph isomorphism problem}

Balanced incomplete block designs with well-distributed minimal sub-BIBDs arise while examining a seemingly unrelated problem, the graph isomorphism problem (GIP): such BIBDs appear to be particularly resilient to the recent algorithm given by Babai \cite{Ba15} (see also \cite{HBD17}) that solves the problem in quasipolynomial time. We expose now some key features of the problem and the algorithm, while pointing out the connection to the structures described in the previous sections.

The GIP poses the simple question: given two generic graphs $\Gamma_{1},\Gamma_{2}$ on $v$ vertices, what is the time necessary to determine whether they are isomorphic and, in case of an affirmative answer, to describe the set of isomorphisms between the two graphs? A brute force algorithm has trivially factorial time in $v$, just by checking all possible permutations of vertices; since the 1980s we have had algorithms running in time $e^{O(\sqrt{v\log v})}$ (see \cite{BKL83}), and there have been other results holding in special cases (for example, graphs of bounded degree as in Luks \cite{Lu82}, working in polynomial time) or working for almost all graphs (see for example \cite{BES80}, in quadratic time).

A recent improvement by Babai \cite{Ba15} provides an algorithm that works deterministically for all graphs in quasipolynomial time, i.e. in time $e^{O(\log v)^{c}}$ for some constant $c$; a subsequent analysis by Helfgott \cite{HBD17} (see also the original version \cite{He17} in French) showed that it is possible to take $c=3$. It is still open and a target of investigation whether it is possible to produce an algorithm that solves the GIP in polynomial time, which would correspond to $c=1$; it is then reasonable to ask whether Babai's algorithm can be further improved in this direction, or at least refined to get $c=2$.

Let us examine very broadly the proof of Babai's claim; we will follow Helfgott's version of the result, as it is explicitly structured to obtain the condition $c=3$ that we want to improve. The proof works by recursion and breaks down graphs into smaller subgraphs in a suitable way in order to build the isomorphisms that we are looking for, or to exclude their existence; the process involves a certain degree of non-canonical choices, which translate to a multiplication of the runtime as they correspond in practice to a case-by-case solution of each possibility: the small amount of non-canonicity involved in the process is the key feature that brought down the runtime to quasipolynomial order in $v$, and any effort to further improve the result would likely involve an even greater reduction of the non-canonical choices that are necessary in the analysis.

A fundamental step in the main recursion that takes place in the algorithm is the Split-or-Johnson routine (see \cite[\S 7]{Ba15} \cite[\S 5.2]{HBD17}), which basically allows us to partition the graphs into subgraphs whose size is a fraction of the original one and to reduce the GIP for the whole graphs to several instances of the GIP for these smaller pieces: the process is not canonical, in fact its multiplicative cost is $e^{O(\log v)^{2}}$, and this fact combined with the recursion itself gives (among other things) the final cost with $c=3$ that we expect for the whole algorithm. In turn, the cost inside Split-or-Johnson is a consequence of the use of the Design Lemma (\cite[Thm. 6.1.1]{Ba15}, \cite[Prop. 5.1]{HBD17}): this lemma involves individualizing a handful of vertices, i.e. arbitrarily colouring them to make them distinct from all the others as defined in \cite[\S 1.3]{SW16}, and it is employed in this context to break the excessive symmetry that the graphs coming from $t$-designs exhibit and that makes them resilient to analysis; for the sake of clarity, we precise that the graph coming from a design $(V,\mathcal{B})$ is intended to be the bipartite graph having as set of vertices the set $V\cup\mathcal{B}$ and as edges the pairs $\{x,B\}$ such that $x\in B$. If there existed a way to deal with these kinds of graphs without resorting to the Design Lemma, it would already be a big step towards an improved algorithm with $c=2$.

Another tool at our disposal while dealing with the GIP is the Weisfeiler-Leman algorithm \cite{WL68}, a powerful tool that gives canonical colourings to graphs in polynomial time: it features prominently in many parts of Babai's proof, although it is not sufficient on its own to solve the GIP in reasonable time (see for example \cite{CFI92}). It is interesting to see whether applying this rather simple tool to our $t$-designs (more precisely, to the graphs coming from them) is powerful enough to break them, either on its own or coupled with a small individualization; indeed, small computational experiments show that very often Weisfeiler-Leman is enough to give a canonical colouring of the vertices and blocks of BIBDs, with an evident exception.

\begin{proposition}\label{prwllambda1}
Let $\Gamma$ be a bipartite graph with set of vertices $V\cup\mathcal{B}$ corresponding to a BIBD $(V,\mathcal{B})$. Suppose that $\lambda=1$: then, applying the Weisfeiler-Leman algorithm to $\Gamma$ yields a colouring on the vertices that at best distinguishes $V$ from $\mathcal{B}$, while leaving $V$ and $\mathcal{B}$ themselves monochromatic on the vertices.
\end{proposition}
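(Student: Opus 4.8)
The plan is to treat the algorithm in its classical two-dimensional form, which colours \emph{ordered pairs} of vertices of $\Gamma$ and reads off a vertex colouring from the diagonal pairs $(u,u)$. It is worth noting first that one-dimensional colour refinement already leaves both $V$ and $\mathcal{B}$ monochromatic for \emph{every} BIBD: each point lies on exactly $r$ blocks and each block contains exactly $k$ points, so the neighbour-colour multisets can never separate two vertices on the same side. Hence the content of the statement, and the role of the hypothesis $\lambda=1$, lie entirely in the two-dimensional refinement, and I would prove that the stable colouring assigns at most two colours to the diagonal, one to $\{(x,x):x\in V\}$ and one to $\{(B,B):B\in\mathcal{B}\}$, which is exactly the asserted conclusion.

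The key is to exhibit an explicit partition $\mathcal{C}$ of $(V\cup\mathcal{B})^{2}$ that is coarse on the diagonal and is a fixed point of the refinement step, and then to invoke the standard fact that the stable Weisfeiler--Leman colouring is the coarsest fixed point of the refinement operator that refines the initial colouring (the one recording only equality, adjacency, and non-adjacency). I would take the relations of $\mathcal{C}$ to be: the two diagonal classes above; the class of incident pairs $\{(x,B):x\in B\}$ and its transpose; the class of distinct point-pairs; the class of non-incident point-block pairs and its transpose; and, crucially, the two classes of distinct block-pairs $\{(B,B'):|B\cap B'|=1\}$ and $\{(B,B'):B\cap B'=\emptyset\}$. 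Here $\lambda=1$ enters at the very outset: two distinct blocks share at most one vertex, so these two classes really do exhaust the block-pairs, whereas for $\lambda>1$ the block-pairs would split further according to their (non-constant) intersection sizes, which is precisely what lets the algorithm break those designs.

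The substantive step is to verify that $\mathcal{C}$ is stable, i.e. that for any two relations $R_i,R_j$ the number of $z\in V\cup\mathcal{B}$ with $(u,z)\in R_i$ and $(z,u')\in R_j$ depends only on the relation containing $(u,u')$; equivalently, that $\mathcal{C}$ is a coherent configuration. Each such count is a routine double-count governed by the numerology of Lemma~\ref{lebibdeasy} together with the ``unique block through two points'' property: two distinct points lie on exactly one common block, a point lies on exactly $r$ blocks, and---this is the delicate family---given a meeting pair $(B,B')$ the number of further blocks meeting both, or meeting one and avoiding the other, comes out to a fixed expression in $v,k,r$ (for instance $(k-1)^{2}+(r-2)$ blocks meet both), independent of the chosen pair, precisely because every relevant incidence is forced by a unique block through two points. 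I expect the main obstacle to be organising this verification rather than any single computation: the structure constants for triples of blocks are the ones that genuinely require $\lambda=1$, and they must be checked case by case (meeting or disjoint against incident or non-incident), so the bookkeeping is the real work, and it is exactly here that a passage to $\lambda>1$ would fail.

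With stability of $\mathcal{C}$ in hand the conclusion is immediate. Since $\mathcal{C}$ refines the initial colouring of $\Gamma$ and is a fixed point of the refinement step, the stable colouring produced by the algorithm is coarser than or equal to $\mathcal{C}$; in particular each of its diagonal colour classes is a union of diagonal classes of $\mathcal{C}$, of which there are only two. Hence no two vertices on the same side are ever separated: all of $V$ carries one colour and all of $\mathcal{B}$ another (possibly the same, when $r=k$), so the algorithm at best distinguishes $V$ from $\mathcal{B}$ while leaving each side monochromatic, as claimed.
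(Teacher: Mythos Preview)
Your proposal is correct and follows essentially the same route as the paper: define the nine-class partition on ordered pairs (two diagonal classes, incident/non-incident point--block pairs and their transposes, distinct point pairs, and block pairs split by intersection size $0$ or $1$), verify by case-by-case triangle counting that it is stable under the refinement step, and conclude that Weisfeiler--Leman cannot separate vertices within $V$ or within $\mathcal{B}$. The paper carries out the full bookkeeping explicitly, including the count $(k-1)^{2}+r-2$ you highlight, and likewise notes the symmetric case $r=k$ where even the two sides may fail to be distinguished.
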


\begin{proof}
The Weisfeiler-Leman algorithm iterates the following refinement procedure: starting from a colouring of $\Gamma^{2}$, for every $(x,y)$ we add to its colour the information about the colours of all triangles $(x,z,y)$ built on it; when the refinement step does not create any new colour, the algorithm stops (for a precise definition, see for instance \cite[Alg. 3]{HBD17}). The starting point is the colouring $\mathcal{C}_{0}$ consisting of the three colours ``vertex'', ``edge'', ``not edge'' assigned in the obvious way to the pairs of vertices $(x,y)\in\Gamma^{2}$; we claim that the algorithm does not manage to further refine the colouring $\mathcal{C}$ given by:
\begin{eqnarray*}
(x,y)\in\text{``vertex''} & \Longleftrightarrow & x=y\in V \\
(x,y)\in\text{``block''} & \Longleftrightarrow & x=y\in\mathcal{B} \\
(x,y)\in\text{``belongs''} & \Longleftrightarrow & x\in V,y\in\mathcal{B},x\in y \\
(x,y)\in\text{``doesn't belong''} & \Longleftrightarrow & x\in V,y\in\mathcal{B},x\not\in y \\
(x,y)\in\text{``contains''} & \Longleftrightarrow & x\in\mathcal{B},y\in V,y\in x \\
(x,y)\in\text{``doesn't contain''} & \Longleftrightarrow & x\in\mathcal{B},y\in V,y\not\in x \\
(x,y)\in\text{``vertices''} & \Longleftrightarrow & x\in V,y\in V,x\neq y \\
(x,y)\in\text{``0'',``1''} & \Longleftrightarrow & x\in\mathcal{B},y\in\mathcal{B},|x\cap y|=0,1
\end{eqnarray*}
If the claim is true, in particular it means that $\mathcal{C}$ is a refinement of $\mathcal{C}_{0}$ that would make the algorithm stop and that distinguishes $V$ from $\mathcal{B}$ (coloured ``vertex'' and ``block'' respectively) but does not distinguish anything inside each of them: this proves the proposition. To be clear, there could be BIBDs that do not even reach this $\mathcal{C}$, namely symmetric BIBDs, as shown in \cite[Ex. B.13]{HBD17}.

Proving the claim is just a dull exercise of verifying that the number of $z$ such that $(x,z),(z,y)$ have a given colour is independent from the choice of $x,y$ (among all $(x,y)$ having a certain fixed colour); everything reduces to the use of $v,k,b,r$ (in particular $r$ is independent from the vertex it refers to, by Lemma~\ref{lebibdeasy}\ref{lebibdeasy1}) and to the fact that for any two vertices in $V$ there exists a unique block containing both, since $\lambda=1$, and for any two blocks there exists at most one vertex in their intersection, again since $\lambda=1$. For the sake of brevity, we call the $9$ colours described above respectively: $\text{vx},\text{bl},\text{be},\text{dbe},\text{co},\text{dco},\text{vs},\text{0},\text{1}$.

For $(x,x)\in\text{vx}$, there is one vertex $z$ ($z=x$) such that $(x,z),(z,x)\in\text{vx}$; there are $v-1$ vertices ($z\in V\setminus\{x\}$) such that $(x,z),(z,x)\in\text{vs}$; there are $r$ vertices ($z\in\mathcal{B}$ with $x\in z$) such that $(x,z)\in\text{be},(z,x)\in\text{co}$; and there are $b-r$ vertices ($z\in\mathcal{B}$ with $x\not\in z$) such that $(x,z)\in\text{dbe},(z,x)\in\text{dco}$.

For $(x,x)\in\text{bl}$, there is one vertex ($z=x$) such that $(x,z),(z,x)\in\text{bl}$; there are $\frac{k(v-k)}{k-1}$ vertices ($z\in\mathcal{B}$ with $|z\cap x|=1$ by \eqref{lewdmbeasyb1}) such that $(x,z),(z,x)\in\text{1}$; there are $\frac{(v-k^{2}+k-1)(v-k)}{k(k-1)}$ vertices ($z\in\mathcal{B}$ with $|z\cap x|=0$ by \eqref{lewdmbeasyb0}) such that $(x,z),(z,x)\in\text{0}$; there are $k$ vertices ($z\in V$ with $z\in x$) such that $(x,z)\in\text{co},(z,x)\in\text{be}$; and there are $v-k$ vertices ($z\in V$ with $z\not\in x$) such that $(x,z)\in\text{dco},(z,x)\in\text{dbe}$.

For $(x,y)\in\text{be}$, there is one vertex ($z=x$) such that $(x,z)\in\text{vx},(z,y)\in\text{be}$; there are $k-1$ vertices ($z\in y\setminus\{x\}$) such that $(x,z)\in\text{vs},(z,y)\in\text{be}$; there are $v-k$ vertices ($z\in V\setminus y$) such that $(x,z)\in\text{vs},(z,y)\in\text{dbe}$; there is one vertex ($z=y$) such that $(x,z)\in\text{be},(z,y)\in\text{bl}$; there are $r-1$ vertices ($z\in\mathcal{B}\setminus\{y\}$ with $x\in z$) such that $(x,z)\in\text{be},(z,y)\in\text{1}$; there are $\frac{k(v-k)}{k-1}-r+1$ vertices ($z\in\mathcal{B}$ intersecting $y$ outside $x$ by \eqref{lewdmbeasyb1}) such that $(x,z)\in\text{dbe},(z,y)\in\text{1}$; and there are $\frac{(v-k^{2}+k-1)(v-k)}{k(k-1)}$ vertices ($z\in\mathcal{B}$ with $|z\cap y|=0$ by \eqref{lewdmbeasyb0}) such that $(x,z)\in\text{dbe},(z,y)\in\text{0}$.

For $(x,y)\in\text{dbe}$, there is one vertex ($z=x$) such that $(x,z)\in\text{vx},(z,y)\in\text{dbe}$; there are $k$ vertices ($z\in y$) such that $(x,z)\in\text{vs},(z,y)\in\text{be}$; there are $v-k-1$ vertices ($z\in V\setminus (y\cup\{x\})$) such that $(x,z)\in\text{vs},(z,y)\in\text{dbe}$; there is one vertex ($z=y$) such that $(x,z)\in\text{dbe},(z,y)\in\text{bl}$; there are $k$ vertices ($z\in\mathcal{B}$ with $x\in z$ and $|z\cap y|=1$) such that $(x,z)\in\text{be},(z,y)\in\text{1}$; there are $r-k$ vertices ($z\in\mathcal{B}$ with $x\in z$ and $|z\cap y|=0$) such that $(x,z)\in\text{be},(z,y)\in\text{0}$; there are $\frac{k(v-k)}{k-1}-k$ vertices ($z\in\mathcal{B}$ with $x\not\in z$ and $|z\cap y|=1$ by \eqref{lewdmbeasyb1}) such that $(x,z)\in\text{dbe},(z,y)\in\text{1}$; and there are $\frac{(v-k^{2}+k-1)(v-k)}{k(k-1)}-r+k$ vertices ($z\in\mathcal{B}$ with $|z\cap(y\cup\{x\})|=0$ by \eqref{lewdmbeasyb0}) such that $(x,z)\in\text{dbe},(z,y)\in\text{0}$.

For $(x,y)\in\text{co}$, there is one vertex ($z=x$) such that $(x,z)\in\text{bl},(z,y)\in\text{co}$; there are $r-1$ vertices ($z\in\mathcal{B}\setminus\{x\}$ with $y\in z$) such that $(x,z)\in\text{1},(z,y)\in\text{co}$; there are $\frac{k(v-k)}{k-1}-r+1$ vertices ($z\in\mathcal{B}$ intersecting $x$ outside $y$ by \eqref{lewdmbeasyb1}) such that $(x,z)\in\text{1},(z,y)\in\text{dco}$; there are $\frac{(v-k^{2}+k-1)(v-k)}{k(k-1)}$ vertices ($z\in\mathcal{B}$ with $|z\cap x|=0$ by \eqref{lewdmbeasyb0}) such that $(x,z)\in\text{0},(z,y)\in\text{dco}$; there is one vertex ($z=y$) such that $(x,z)\in\text{co},(z,y)\in\text{vx}$; there are $k-1$ vertices ($z\in x\setminus\{y\}$) such that $(x,z)\in\text{co},(z,y)\in\text{vs}$; and there are $v-k$ vertices ($z\in V\setminus x$) such that $(x,z)\in\text{dco},(z,y)\in\text{vs}$.

For $(x,y)\in\text{dco}$, there is one vertex ($z=x$) such that $(x,z)\in\text{bl},(z,y)\in\text{dco}$; there are $k$ vertices ($z\in\mathcal{B}$ with $|z\cap x|=1$ and $y\in z$) such that $(x,z)\in\text{1},(z,y)\in\text{co}$; there are $r-k$ vertices ($z\in\mathcal{B}$ with $|z\cap x|=0$ and $y\in z$) such that $(x,z)\in\text{0},(z,y)\in\text{co}$; there are $\frac{k(v-k)}{k-1}-k$ vertices ($z\in\mathcal{B}$ with $|z\cap x|=1$ and $y\not\in z$ by \eqref{lewdmbeasyb1}) such that $(x,z)\in\text{1},(z,y)\in\text{dco}$; there are $\frac{(v-k^{2}+k-1)(v-k)}{k(k-1)}-r+k$ vertices ($z\in\mathcal{B}$ with $|z\cap(x\cup\{y\})|=0$ by \eqref{lewdmbeasyb0}) such that $(x,z)\in\text{0},(z,y)\in\text{dco}$; there is one vertex ($z=y$) such that $(x,z)\in\text{dco},(z,y)\in\text{vx}$; there are $k$ vertices ($z\in x$) such that $(x,z)\in\text{co},(z,y)\in\text{vs}$; and there are $v-k-1$ vertices ($z\in V\setminus(x\cup\{y\})$) such that $(x,z)\in\text{dco},(z,y)\in\text{vs}$.

For $(x,y)\in\text{vs}$, there is one vertex ($z=x$) such that $(x,z)\in\text{vx},(z,y)\in\text{vs}$; there is one vertex ($z=y$) such that $(x,z)\in\text{vs},(z,y)\in\text{vx}$; there are $v-2$ vertices ($z\in V\setminus\{x,y\}$) such that $(x,z),(z,y)\in\text{vs}$; there is one vertex (the unique $z\in\mathcal{B}$ with $x,y\in z$) such that $(x,z)\in\text{be},(z,y)\in\text{co}$; there are $r-1$ vertices ($z\in\mathcal{B}$ with $x\in z$ and $y\not\in z$) such that $(x,z)\in\text{be},(z,y)\in\text{dco}$; there are $r-1$ vertices ($z\in\mathcal{B}$ with $x\not\in z$ and $y\in z$) such that $(x,z)\in\text{dbe},(z,y)\in\text{co}$; and there are $b-2r+1$ vertices ($z\in\mathcal{B}$ with $x,y\not\in z$) such that $(x,z)\in\text{dbe},(z,y)\in\text{dco}$.

For $(x,y)\in\text{0}$, there is one vertex ($z=x$) such that $(x,z)\in\text{bl},(z,y)\in\text{0}$; there is one vertex ($z=y$) such that $(x,z)\in\text{0},(z,y)\in\text{bl}$; there are $k^{2}$ vertices ($z\in\mathcal{B}$ with $|z\cap x|=|z\cap y|=1$) such that $(x,z),(z,y)\in\text{1}$; there are $\frac{k(v-k)}{k-1}-k^{2}$ vertices ($z\in\mathcal{B}$ with $|z\cap x|=1$ and $|z\cap y|=0$ by \eqref{lewdmbeasyb1}) such that $(x,z)\in\text{1},(z,y)\in\text{0}$; there are $\frac{k(v-k)}{k-1}-k^{2}$ vertices ($z\in\mathcal{B}$ with $|z\cap x|=0$ and $|z\cap y|=1$ by \eqref{lewdmbeasyb1}) such that $(x,z)\in\text{0},(z,y)\in\text{1}$; there are $b-2-2\frac{k(v-k)}{k-1}+k^{2}$ vertices ($z\in\mathcal{B}$ with $|z\cap x|=|z\cap y|=0$ by \eqref{lewdmbeasyb1}) such that $(x,z),(z,y)\in\text{0}$; there are $k$ vertices ($z\in x$) such that $(x,z)\in\text{co},(z,y)\in\text{dbe}$; there are $k$ vertices ($z\in y$) such that $(x,z)\in\text{dco},(z,y)\in\text{be}$; and there are $v-2k$ vertices ($z\not\in x\cup y$) such that $(x,z)\in\text{dco},(z,y)\in\text{dbe}$.

For $(x,y)\in\text{1}$, there is one vertex ($z=x$) such that $(x,z)\in\text{bl},(z,y)\in\text{1}$; there is one vertex ($z=y$) such that $(x,z)\in\text{1},(z,y)\in\text{bl}$; there are $r-2+(k-1)^{2}$ vertices ($z\in\mathcal{B}$ with $|z\cap x|=|z\cap y|=1$ by summing the blocks containing the common vertex and the blocks bridging $x,y$ through two other vertices) such that $(x,z),(z,y)\in\text{1}$; there are $\frac{k(v-k)}{k-1}-r+1-(k-1)^{2}$ vertices ($z\in\mathcal{B}$ with $|z\cap x|=1$ and $|z\cap y|=0$ by what we said before and \eqref{lewdmbeasyb1}) such that $(x,z)\in\text{1},(z,y)\in\text{0}$; there are $\frac{k(v-k)}{k-1}-r+1-(k-1)^{2}$ vertices ($z\in\mathcal{B}$ with $|z\cap x|=0$ and $|z\cap y|=1$ as before) such that $(x,z)\in\text{0},(z,y)\in\text{1}$; there are $\frac{(v-k^{2}+k-1)(v-k)}{k(k-1)}-\frac{k(v-k)}{k-1}+r-1+(k-1)^{2}$ vertices ($z\in\mathcal{B}$ with $|z\cap x|=|z\cap y|=0$ by \eqref{lewdmbeasyb0} and what we said before) such that $(x,z),(z,y)\in\text{0}$; there is one vertex (the unique $z\in V$ with $z\in x\cap y$) such that $(x,z)\in\text{co},(z,y)\in\text{be}$; there are $k-1$ vertices ($z\in x\setminus y$) such that $(x,z)\in\text{co},(z,y)\in\text{dbe}$; there are $k-1$ vertices ($z\in y\setminus x$) such that $(x,z)\in\text{dco},(z,y)\in\text{be}$; and there are $v-2k+1$ vertices ($z\not\in x\cup y$) such that $(x,z)\in\text{dco},(z,y)\in\text{dbe}$.
\end{proof}

What this proposition shows is that, if we want to crack BIBDs with $\lambda=1$ using Weisfeiler-Leman, individualization is a necessity; the problem is that this defies the whole point of avoiding the use of the Design Lemma, because it introduces again an element of non-canonicity that multiplies the cost. In order to escape the issue, there are two things that can play in our favour: we can strive to get better results than just breaking graphs into pieces of fractional size, or we can use canonical techniques other than (and undetected by) Weisfeiler-Leman. An example of how realizing the former is an actual possibility is the following result.

\begin{proposition}\label{prsteiner3}
Let $(V,\mathcal{B})$ be a $(v,k,1)$-BIBD. Suppose that $k=3$: then there is a set of at most $1+\lceil\log_{2}v\rceil$ vertices such that, after individualizing them and applying Weisfeiler-Leman, the graph corresponding to $(V,\mathcal{B})$ has a final colouring that gives to every pair of vertices a different colour (we say that the set completely splits the graph, as defined in \cite[\S 1.3]{SW16}).
\end{proposition}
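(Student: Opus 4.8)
The plan is to reduce the statement to a purely combinatorial fact about \emph{generating sets} of the Steiner system, and then to check that the colour refinement underlying Weisfeiler--Leman faithfully reproduces the generation process. Recall from the discussion following Lemma~\ref{leintersbibd} that a set $T\subseteq V$ generates a sub-BIBD $\langle T\rangle$: one repeatedly adjoins, for every pair of already-present points, the third point of the (unique, since $\lambda=1$ and $k=3$) block through them. I would first produce a generating set $T$ with $\langle T\rangle=V$ of the desired size, and then argue that individualizing $T$ and refining turns every point of $\langle T\rangle$ into a singleton colour class, after which the blocks split as well.

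The combinatorial heart is a doubling estimate for $|\langle T\rangle|$. The key observation is that if $W=\langle T\rangle$ is a proper sub-BIBD and $p\in V\setminus W$, then no block through $p$ meets $W$ in two points (otherwise that whole block, hence $p$, would lie in $W$). Consequently the $|W|$ blocks joining $p$ to the points of $W$ are distinct, their third points are distinct and all lie outside $W$, so $|\langle T\cup\{p\}\rangle|\ge 2|W|+1$. Starting from a single point and adjoining, at each stage, \emph{any} point outside the current closure (so no clever choice is needed), the closure size $w$ satisfies $w_{\mathrm{new}}+1\ge 2(w_{\mathrm{old}}+1)$; hence after individualizing $t$ points it is at least $2^{t}-1$. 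Taking $t=\lceil\log_2(v+1)\rceil\le 1+\lceil\log_2 v\rceil$ forces the closure to equal all of $V$, which yields a generating set of the required size.

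It remains to show that individualizing such a $T$ and refining the incidence graph discretizes it. I would induct along the generation order. Each point of $T$ is a singleton by individualization. If $q,q'$ are already singletons, then the unique block $B=\{q,q',r\}$ through them is the only block incident to both singleton colours $c(q)$ and $c(q')$ (this is exactly where $\lambda=1$ is used), so refinement assigns $B$ a colour distinct from every other block; then $r$ is the only point outside $T$ incident to the now uniquely coloured $B$, while every other point fails to be incident to $B$, so $r$ is separated from all remaining points. Thus every generated point, i.e.\ every point of $V$, becomes a singleton; and once all points are singletons, distinct blocks have distinct sets of incident point-colours, so the blocks split too. I would carry out this refinement step using ordinary colour refinement, which Weisfeiler--Leman refines at least as finely, so the same complete split is produced by the algorithm as in Proposition~\ref{prwllambda1}.

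I expect the delicate point to be this faithful transfer: verifying that the local counting done by refinement actually \emph{certifies} each newly generated block and point as a singleton, rather than merely identifying it abstractly. Concretely, at each inductive step one must check that the freshly distinguished block and point are separated from \emph{all} other vertices and not only from those on the current line, which is precisely where the two consequences of $\lambda=1$ — no two points lie on two common blocks, and no two blocks meet in two points — do all the work.
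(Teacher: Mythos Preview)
Your proposal is correct and follows essentially the same strategy as the paper: build a generating set by repeatedly adjoining a point outside the current closure, use the doubling $|\langle T\cup\{p\}\rangle|\geq 2|\langle T\rangle|+1$ (which the paper obtains by citing Lemma~\ref{lesubboundv}, while you reprove it inline), and then check that colour refinement propagates singleton colours along the generation process via the unique block through two already-split points. The only cosmetic difference is that the paper initializes with three non-collinear points rather than one, leading to the same final bound $1+\lceil\log_2 v\rceil$ after a slightly different bookkeeping.
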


\begin{proof}
We start individualizing any $3$ vertices $x_{1},x_{2},x_{3}\in V$ not in the same block (i.e. not adjacent to the same vertex $y\in\mathcal{B}$). We establish some useful facts.

First, if two vertices $x_{1},x_{2}$ have a distinct colour then the block containing them acquires a distinct colour too: in fact, for any $x$ with a distinct colour $c(x)$, all the edges that have $x$ as an endpoint get a refined colouring containing the information ``its endpoint has colour $c(x)$''; more precisely, $(y,z)$ of colour $c(y,z)$ has endpoint $x$ (i.e. $z=x$) if and only if the number of vertices $w$ such that the colour of $(y,w)$ is $c(y,z)$ and the colour of $(w,z)$ is $c(x)$ is $1$ (namely, it would be the vertex $w=x$ itself). Now, having these two vertices $x_{1},x_{2}$ and having coloured the edges incident to them, the unique block containing the two vertices is also the unique vertex adjacent to both of them in the graph: in other words, it is the only vertex that is the starting point of an edge whose colour knows that ``its endpoint has colour $c(x_{1})$'' and an edge whose colour knows that ``its endpoint has colour $c(x_{2})$''; at the next iteration of the algorithm, the block acquires a distinct colour that contains this entire information.

Second, if a block $B$ has a distinct colour then all of its vertices have a colour that knows that they belong to $B$: in fact, every edge starting from $B$ acquires a colour that has this information, and then the endpoints do too. Combining this with the previous observation, if two vertices $x_{1},x_{2}$ have distinct colours then the unique third vertex that constitutes the block acquires also a distinct colour.

Now, we are starting by individualizing three vertices: let us say that we have an ordering on these distinct colours, for simplicity we say that $x_{1}$ has colour ``1'', $x_{2}$ has colour ``2'' and $x_{3}$ has colour ``3''. To understand what the Weisfeiler-Leman algorithm is doing, we iterate the following procedure: given a set of vertices $S$ with distinct ordered colours, we consider all pairs of vertices $\{x_{i},x_{j}\}\subseteq S$ that belong to a block with the third vertex outside $S$; by the reasoning above, all these third vertices also acquire a distinct colour, and we induce on these new colours an ordering defined lexicographically on the pairs: for every third vertex $x_{k}$ and every $x_{i}\in S$ we have $c(x_{k})>c(x_{i})$, while for every two third vertices $x_{k},y_{k}$ coming from $\{x_{i},x_{j}\},\{y_{i},y_{j}\}\subseteq S$ we have that $c(x_{k})>c(y_{k})$ if and only if either $c(x_{i})>c(y_{i})$ or both $c(x_{i})=c(y_{i})$ (which means $x_{i}=y_{i}$) and $c(x_{j})>c(y_{j})$. After having ordered all these third colours, we add the third vertices to $S$ and we repeat. For example at the first step, since $x_{1},x_{2},x_{3}$ are not in the same block, there are three new vertices $x_{4},x_{5},x_{6}$ coming from blocks $\{x_{1},x_{2},x_{4}\},\{x_{1},x_{3},x_{5}\},\{x_{2},x_{3},x_{6}\}$: since $(1,2)<(1,3)<(2,3)$ we obtain that $x_{4},x_{5},x_{6}$ are coloured ``4'', ``5'', ``6'' respectively.

Whichever acquisition of distinct colour happens through the procedure described above, it happens also through Weisfeiler-Leman because of the facts that we established above; therefore, we have only to ensure that the procedure distinguishes all vertices in $V$, and we would be done (if we have all distinct colours in $V$, obviously we have also distinct colours in $\mathcal{B}$: every $B\in\mathcal{B}$ is adjacent to a distinct triple of vertices, which means a distinct triple of colours, thus acquiring itself a distinct colour).

The procedure however may stop before covering the whole $V$. This happens only if at an intermediate step the set $S$ has no pairs $\{x_{i},x_{j}\}$ with a third vertex $x_{k}$ outside $S$, i.e. only if $S$ contains all blocks of $(V,\mathcal{B})$ covering all pairs of $S$: in other words, only if $S$ is a sub-BIBD. When this happens, we individualize a new vertex $y\not\in S$, we add it to $S$ and we repeat the procedure again; we go on until the whole $V$ is covered.

How many vertices have been individualized in order to make the procedure work until the end? We have started with $3$ vertices and we individualize as many other vertices as the length of a chain of sub-BIBDs nested into each other, from the minimal sub-BIBD containing the initial $3$ vertices up to the whole $V$; by Lemma~\ref{lesubboundv}, for each two consecutive sub-BIBDs $V_{1}\subseteq V_{2}$ in the chain, we have $|V_{2}|\geq (k-1)|V_{1}|+1>2|V_{1}|$. Therefore the number of vertices that is sufficient to individualize is $<3+\log_{2}\frac{v}{3}<2+\log_{2}v$.
\end{proof}

What this proposition shows is that after individualizing $O(\log v)$ vertices we are able to conclude our search for isomorphisms just by testing one possibility: since all vertices have distinct colours, there is only one possible bijection between the two graphs preserving colours, and for that bijection we have to check whether edges in one graph correspond to edges in the other graph; for a Steiner triple system, there are $3b=\frac{v(v-1)}{2}$ edges in the graph, so the check is performed in polynomial time. Accounting for the multiplication cost, the GIP for Steiner triple systems is solved in time $e^{O(\log v)^{2}}$. We also observe that the proposition implies that Steiner triple systems have an automorphism group of size at most $e^{O(\log v)^{2}}$; this is of course neither unknown nor unexpected, as there exist several results classifying automorphism groups of many subclasses of Steiner triple systems and we actually know that with extremely high probability a Steiner triple system has no nontrivial automorphisms (see \cite{Ba80}).

Another fact that is hinted at in the proof of Proposition~\ref{prsteiner3} and implied by Proposition~\ref{prwllambda1} is that sub-BIBDs are not detected by Weisfeiler-Leman, in the sense that in a graph deriving from a $(v,k,1)$-BIBD $(V,\mathcal{B})$ two vertices corresponding to two elements $x,y\in V$ or $x,y\in\mathcal{B}$, one of which contained in a sub-BIBD and one of which not contained in any of them, do not receive different colours after applying Weisfeiler-Leman (by Proposition~\ref{prwllambda1}, no two such vertices are distinguished in any way); on the other hand, sub-BIBDs are canonical, in the sense that they are preserved by isomorphisms: what is more, other data like the number of sub-BIBDs and the size of sub-BIBDs containing a given vertex or block are clearly preserved as well. Finally, despite being out of reach for a pure Weisfeiler-Leman, minimal sub-BIBDs are fast to detect.

\begin{lemma}\label{letimesub}
Let $(V,\mathcal{B})$ be a $(v,k,1)$-BIBD. Then in time $O(v^{5})$ we can produce the list of all minimal sub-BIBDs, and in particular for every $x\in V$ and $B\in\mathcal{B}$ we can obtain the number of minimal sub-BIBDs containing them.
\end{lemma}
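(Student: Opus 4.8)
The plan is to exploit the constructive description of the sub-BIBD generated by a set of vertices given right after Lemma~\ref{leintersbibd}: starting from a seed set, one repeatedly adjoins, for each pair of vertices already present, the unique block through them (unique because $\lambda=1$) together with all of its vertices, until no new vertex appears. Write $\langle S\rangle$ for the sub-BIBD so generated from a seed $S$, and call a triple of vertices \emph{collinear} if it lies in a common block. The key structural observation I would establish first is that every minimal sub-BIBD $W$ equals $\langle\{x,y,z\}\rangle$ for any three of its non-collinear vertices $x,y,z$. Indeed such a triple generates a sub-BIBD contained in $W$, and its closure already contains the three distinct blocks $\tau(x,y),\tau(x,z),\tau(y,z)$ pairwise joining the points together with the vertex $z\notin\tau(x,y)$, so it has more than $k$ vertices; being a nontrivial sub-BIBD inside $W$, by minimality of $W$ it must equal $W$. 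Conversely every nontrivial sub-BIBD (size $>k$) does contain a non-collinear triple: picking two distinct blocks $B_{1},B_{2}$ inside it, any two points of $B_{1}$ together with a point of $B_{2}\setminus B_{1}$ form such a triple. Hence the minimal sub-BIBDs are exactly the smallest nontrivial members of the family $\{\langle\{x,y,z\}\rangle:\{x,y,z\}\text{ non-collinear}\}$.

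The algorithm then reads as follows. First I would build, in time $O(v^{2})$, a table $\tau$ indexed by pairs of vertices recording the unique block through each pair; this is done by scanning the $b=O(v^{2})$ blocks (Lemma~\ref{lebibdeasy}\ref{lebibdeasy3}) and writing, for each, its $\binom{k}{2}$ pairs into $\tau$. Then for each of the $O(v^{3})$ triples $\{x,y,z\}$ I would discard the collinear ones (testing whether $z\in\tau(x,y)$) and, for the remainder, compute the vertex set of $\langle\{x,y,z\}\rangle$ by the following closure loop: keep a boolean membership array for the current vertex set $T$, a worklist of unprocessed vertices, and a set of already-expanded blocks; pop a vertex $u$, and for every $w$ currently in $T$ look up $\tau(u,w)$ and, if that block has not yet been expanded, mark it and add its vertices to $T$ and the worklist. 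When the worklist empties, $T$ is closed. Finally I would record each closed $T$ with its size, extract the minimum size $v'>k$ that occurs, and output the distinct sets $T$ of that size as the list of minimal sub-BIBDs; the per-vertex and per-block counts follow from a single tally over this list.

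The cost analysis is where the stated bound comes from, and it hinges on charging the closure work correctly. Each popped vertex is paired against the at most $v'$ vertices then present, so the total number of table look-ups over one closure is at most $v'^{2}$; and because each block of the closure is expanded only once, at cost $O(k)$, the total expansion work is $O(b'k)=O(v'^{2})$ by Lemma~\ref{lebibdeasy}\ref{lebibdeasy3}. Thus one closure runs in $O(v'^{2})=O(v^{2})$ time (using the trivial $v'\le v$, and in fact $v'\le\frac{v-1}{k-1}$ for proper sub-BIBDs by Lemma~\ref{lesubboundv}), and the loop over all triples in $O(v^{3}\cdot v^{2})=O(v^{5})$. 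Deduplication of the $O(v^{3})$ closed sets, by a canonical form such as a sorted vertex list or a bit-vector, and the closing tally over the resulting $n=O(v^{3})$ minimal sub-BIBDs against all $v$ vertices and $b=O(v^{2})$ blocks, are both absorbed into this bound.

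The main obstacle to keep in mind is precisely this amortization. A careless implementation that re-adds the vertices of a block, or re-examines a pair, on every encounter would inflate a single closure to $\Theta(v^{2}k)$ or worse and push the total past $O(v^{5})$; the proof must therefore make explicit that each block is expanded at most once and each pair is consulted $O(1)$ times. The only genuinely conceptual ingredient is the structural reduction of the first paragraph, which replaces a search over arbitrary sub-BIBDs by a search over the $O(v^{3})$ triple-generated closures and guarantees that no minimal sub-BIBD is missed.
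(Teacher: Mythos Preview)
Your proposal is correct and follows essentially the same approach as the paper: precompute the pair-to-block table in $O(v^{2})$, iterate over all $O(v^{3})$ non-collinear triples, run the closure procedure in $O(v^{2})$ per triple, and extract the smallest resulting subdesigns. Your write-up is in fact more careful than the paper's in making explicit the structural reduction (every minimal sub-BIBD is the closure of any of its non-collinear triples) and the amortization needed to keep each closure at $O(v^{2})$.
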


\begin{proof}
First, we compile the following list: for every pair of vertices in $V$, we assign the block of $\mathcal{B}$ containing the pair (unique by $\lambda=1$). For every $B\in\mathcal{B}$, we just have to assign $B$ to the $\binom{k}{2}$ pairs inside $B$: this takes time $O\left(b\binom{k}{2}\right)=O(v^{2})$ by Lemma~\ref{lebibdeasy}\ref{lebibdeasy3}.

Take any triple $\{x_{1},x_{2},x_{3}\}\subseteq V$ of vertices not all in the same block. We can find what is the minimal sub-BIBD that contains these three vertices following the procedure described in Proposition~\ref{prsteiner3}: we set $S=\{x_{1},x_{2},x_{3}\}$, for every pair in $S$ we add the unique block that contains it to $S$, and we repeat the procedure until only pairs with blocks already inside $S$ are left. Having already created the aforementioned list, it takes time $O(v^{2})$ again (the number of pairs that we have to check inside $S\subseteq V$) to produce the sub-BIBD containing $x_{1},x_{2},x_{3}$ and the list of vertices and blocks contained in it.

There are $\binom{v}{3}$ triples $\{x_{1},x_{2},x_{3}\}\subseteq V$; we can avoid the triples that are contained in one block, and we can be careful about double-counting of sub-BIBDs, but this leads to improvements up to constants at most. Therefore, we have to repeat the process above $O(v^{3})$ times, which gives the result. The computation of the numbers of minimal sub-BIBDs per vertex/block is a byproduct of our algorithm, easy to account for.
\end{proof}

Thanks to it, we have the interesting possibility of focusing on the sub-BIBDs themselves instead of the blocks, when they exist: this can be an advantage in many situations, thanks to the canonicity and rapidity of the whole procedure.

\begin{proposition}
Let $(V,\mathcal{B})$ be a $(v,k,1)$-BIBD and let $\Gamma$ be the corresponding graph with set of vertices $V\cup\mathcal{B}$. Then we have one of these possibilities:
\begin{enumerate}[(a)]
\item $V$ has no sub-BIBDs;
\item if $V$ has sub-BIBDs but does not satisfy Definition~\ref{dewdmb}, it is possible to find a canonical colouring of $\Gamma$, nontrivial on $\mathcal{B}$, in polynomial time;
\item if $V$ satisfies Definition~\ref{dewdmb} with $m=1$, it is possible to find a canonical nontrivial partition of $\mathcal{B}$ in polynomial time;
\item if $V$ satisfies Definition~\ref{dewdmb}, it is possible to find a graph $\Gamma'$ with set of vertices $V\cup\mathcal{D}$ and such that, if we know $\text{\normalfont Aut}(\Gamma')$, we can obtain $\text{\normalfont Aut}(\Gamma)$ in polynomial time.
\end{enumerate}
\end{proposition}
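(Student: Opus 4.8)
The plan is to handle all four scenarios through the single canonical object supplied by Lemma~\ref{letimesub}: in time $O(v^{5})$ we produce the family $\mathcal{D}$ of minimal sub-BIBDs together with the counts $l_{x}=|\{D\in\mathcal{D}\mid x\in D\}|$ and $m_{B}=|\{D\in\mathcal{D}\mid B\subseteq D\}|$, all of which are canonical since every isomorphism of $(V,\mathcal{B})$ carries minimal sub-BIBDs to minimal sub-BIBDs. Scenario (a) demands nothing. The preliminary fact I would establish first is that block-regularity already forces vertex-regularity: assuming $m_{B}\equiv m$, a double count of the incidences $(x,B,D)$ with $x\in B\subseteq D$ gives $\sum_{B\ni x}m_{B}=rm$ on one side, with $r=\frac{v-1}{k-1}$ by Lemma~\ref{lebibdeasy}\ref{lebibdeasy2}, and $l_{x}r'$ on the other, with $r'=\frac{v'-1}{k-1}$ the replication number inside a (size-$v'$) minimal sub-BIBD; hence $l_{x}=m\frac{v-1}{v'-1}$ is independent of $x$. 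Consequently, once $V$ has sub-BIBDs, failing Definition~\ref{dewdmb} is \emph{equivalent} to $m_{B}$ being non-constant on $\mathcal{B}$ (some blocks may even have $m_{B}=0$). This settles (b) at once: the colouring $B\mapsto m_{B}$ is canonical, computed in polynomial time, and by the observation it is non-constant on $\mathcal{B}$.

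For (c), the hypothesis $m=1$ means each block lies in exactly one minimal sub-BIBD, so $B\mapsto(\text{the unique }D\supseteq B)$ is a canonical polynomial-time map partitioning $\mathcal{B}$ into the fibres $\{B\mid B\subseteq D\}$, one per $D\in\mathcal{D}$. By Proposition~\ref{prwdmbbibd}\ref{prwdmbbibd1} and Corollary~\ref{conl}\ref{conl1}, $(V,\mathcal{D})$ is a genuine $(v,v',1)$-BIBD with $n=\frac{v(v-1)}{v'(v'-1)}$ blocks, and $v>2v'$ from Lemma~\ref{lesubboundv} gives $n\geq 2$; thus the partition has at least two proper nonempty classes and is nontrivial.

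For (d) I would take $\Gamma'$ to be the bipartite incidence graph of the $(v,v',m)$-BIBD $(V,\mathcal{D})$ of Proposition~\ref{prwdmbbibd}\ref{prwdmbbibd1}, with the two sides $V$ and $\mathcal{D}$ pre-coloured to pin down the bipartition. First note that $V$ is not symmetric: a symmetric BIBD has $v=k(k-1)+1$ and by Proposition~\ref{prfisher} cannot contain a proper sub-BIBD; hence $b>v$, and every automorphism of $\Gamma$ preserves its bipartition. Since blocks are determined by their point sets, $\mathrm{Aut}(\Gamma)$ is exactly the design group $\mathrm{Aut}(V,\mathcal{B})\leq\mathrm{Sym}(V)$, and likewise $\mathrm{Aut}(\Gamma')=\mathrm{Aut}(V,\mathcal{D})$. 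The crux is the identity $\mathrm{Aut}(V,\mathcal{B})=\mathrm{Aut}(V,\mathcal{D})$: the inclusion $\subseteq$ is immediate because minimal sub-BIBDs are intrinsic to $(V,\mathcal{B})$; for $\supseteq$ I would invoke that when $m\geq 2$ every block is a size-$k$ pairwise intersection of two sub-BIBDs and, conversely, every size-$k$ pairwise intersection is a block (Proposition~\ref{prwdmbbibd}\ref{prwdmbbibd1}), so $\mathcal{B}$ is definable from $\mathcal{D}$ and any $\sigma$ permuting $\mathcal{D}$ permutes $\mathcal{B}$. Given generators of $\mathrm{Aut}(\Gamma')$, each $\sigma\in\mathrm{Sym}(V)$ lifts in linear time to the pair $(\sigma,\ B\mapsto\sigma(B))$ on $V\cup\mathcal{B}$, recovering $\mathrm{Aut}(\Gamma)$.

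The main obstacle is precisely this last identification. Both the reconstruction of $\mathcal{B}$ from $\mathcal{D}$ and the direction $\supseteq$ hinge on $m\geq 2$: for $m=1$ the pairwise intersections of sub-BIBDs have size at most $1$, so $\mathcal{B}$ is no longer $\mathcal{D}$-definable and $\mathrm{Aut}(V,\mathcal{D})$ may properly contain $\mathrm{Aut}(V,\mathcal{B})$; this is why the $m=1$ situation is isolated as scenario (c), where only the weaker conclusion (a canonical partition rather than a reduction of the automorphism problem) is claimed. A secondary point to treat with care is the possible symmetry of $(V,\mathcal{D})$ — indeed $n=v$ in Examples~\ref{extheone3}--\ref{extheone4} — which is exactly what forces the pre-colouring of the two sides of $\Gamma'$ so that no spurious automorphism swaps $V$ with $\mathcal{D}$.
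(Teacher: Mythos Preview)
Your proposal is correct and follows essentially the same route as the paper's proof: run Lemma~\ref{letimesub}, colour blocks by $m_{B}$ in case (b), partition $\mathcal{B}$ by the unique containing subdesign in case (c), and for (d) pass to the incidence graph of $(V,\mathcal{D})$ and recover $\mathcal{B}$ from size-$k$ intersections of subdesigns when $m\geq 2$. Your treatment is in fact slightly more careful than the paper's in two places---you pre-colour the two sides of $\Gamma'$ to rule out side-swapping automorphisms (relevant since $(V,\mathcal{D})$ can be symmetric, as in Examples~\ref{extheone3}--\ref{extheone4}), and you give the double count for ``block-regularity $\Rightarrow$ vertex-regularity'' explicitly rather than pointing back to Proposition~\ref{prwdmbbibd}.
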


\begin{proof}
We perform the procedure described in Lemma~\ref{letimesub}. If the minimal sub-BIBD is $V$ for all examined triples, we are in the first case. If the minimal sub-BIBDs have size $v'<v$ but do not cover $V$ or $\mathcal{B}$ evenly, it will not cover $\mathcal{B}$ evenly in particular (condition~(\ref{dewdmb2}) in Definition~\ref{dewdmb} would imply condition~(\ref{dewdmb1}): observe the proof of Proposition~\ref{prwdmbbibd}\ref{prwdmbbibd1} and keep in mind Lemma~\ref{lebibdeasy}\ref{lebibdeasy1}); using Lemma~\ref{letimesub} and assigning to each $B\in\mathcal{B}$ a colour that expresses how many sub-BIBDs contain $B$, we get a colouring on $\mathcal{B}$ that is canonical, nontrivial and polynomial-time. If $(V,\mathcal{B})$ has well-distributed minimal sub-BIBDs with $m=1$, it means in particular that every block belongs to exactly one minimal subdesign; then by Lemma~\ref{letimesub} we are able to find them in polynomial time, and $\mathcal{D}$ (seen as a set of subsets of $\mathcal{B}$) is a canonical partition of $\mathcal{B}$.

Finally, suppose that $(V,\mathcal{B})$ has well-distributed minimal sub-BIBDs and that $m>1$. Now, by Proposition~\ref{prwdmbbibd}\ref{prwdmbbibd1} $(V,\mathcal{D})$ is a BIBD as well, so we choose $\Gamma'$ to be the corresponding graph with set of vertices $V\cup\mathcal{D}$: to build the graph we run Lemma~\ref{letimesub}, so that we obtain the collection $\mathcal{D}$ of subdesigns and the set of vertices and blocks that each $D\in\mathcal{D}$ contains. It is obvious that any automorphism of $\Gamma$ is also an automorphism of $\Gamma'$ and vice versa, by being induced by the same bijection on the set $V$: $\Gamma$ has blocks that are more refined than the subdesigns of $\Gamma'$, while on the other hand every $B\in\mathcal{B}$ is definable as intersection of some $D_{1},D_{2}\in\mathcal{D}$ (since $m>1$) so that every automorphism of $\Gamma'$ preserves them too.

Suppose now that we are given a set of generators of $\text{Aut}(\Gamma')$: we can easily translate them into automorphisms of $\Gamma$, and then we are done because they would generate $\text{Aut}(\Gamma)$ as well. As we already said, each $\varphi\in\text{Aut}(\Gamma')$ is induced by a bijection on the set $V$, so the corresponding automorphism in $\Gamma$ can be retrieved by looking at the action of $\varphi$ on $V$ alone.
\end{proof}

This result explains the interest in sub-BIBDs in the context of the GIP. When we have no sub-BIBDs, individualizing three vertices and applying Weisfeiler-Leman allows us to give colours to all the vertices of the graph, without being interrupted by the borders of a subdesign as in Proposition~\ref{prsteiner3}; when $k>3$ the vertices are not uniquely coloured, so in principle we have not solved the problem yet (we have made non-canonical choices obtaining at best the same fractional reduction that the Design Lemma already guaranteed), but it is worth noting that computational experiments show that Weisfeiler-Leman does indeed break down the whole graph in many cases. In the second case of the proposition above, we are already able to reduce the GIP to the smaller subproblems coming from the different colours in which $\mathcal{B}$ is divided and, everything being canonical and polynomial-time, we are in the best possible situation. The third case features a partition of $\mathcal{B}$ into parts of equal size, each one corresponding to a $D\in\mathcal{D}$: this is another way in which the recursion in Babai's algorithm works.

The fourth case tells us that studying the design $(V,\mathcal{D})$ is basically equivalent to study the original design $(V,\mathcal{B})$, since we can translate easily automorphisms of one into automorphisms of the other. $(V,\mathcal{D})$ has advantages of its own: having $\lambda=m$, it escapes the barrier posed by Proposition~\ref{prwllambda1}. What is more, Weisfeiler-Leman is strong enough to be able to exploit some asymmetries that are easy to describe and that is worth investigating; for example, if we suppose that $|D_{1}\cap D_{2}|=|D'_{1}\cap D'_{2}|=k$ but $c^{(1)}_{D_{1}D_{2}}\neq c^{(1)}_{D'_{1}D'_{2}}$, we know by the mere definition of the algorithm that the pairs $(D_{1},D_{2}),(D'_{1},D'_{2})$ will receive distinct colours: after all we are just counting how many triangles $(D_{1},D,D_{2})$ and $(D'_{1},D,D'_{2})$ we can build with all sides of colour ``k'', in the language of Proposition~\ref{prwllambda1}, and if the two numbers are different then the colours will be different too (obviously there is nothing special about $c^{(1)}$, any other one of the parameters $a^{(i)},c^{(i)},e^{(i)}$ will do). Finally, if we pass to $(V,\mathcal{D})$ we are now working with a graph of size $v+n$ instead of $v+b$: if $n<b$, this already counts as a reduction to a smaller problem, a reduction that again happens to be canonical and polynomial-time; the BIBDs in Examples~\ref{extheone3}-\ref{extheone4} for instance have $n=15<35=b$ and $n=40<130=b$, respectively. They are not the only case in which this happens, as we show now.

\begin{proposition}\label{prspecm}
Let $(V,\mathcal{B})$ be a $(v,k,1)$-BIBD that has well-distributed minimal $(v',k,1)$-sub-BIBDs; suppose that there are no intersections of two minimal subdesigns of size $1$. Then $n<b$.

Let $(V,\mathcal{B})$ be a $(v,k,1)$-BIBD that has well-distributed minimal $(v',k,1)$-sub-BIBDs; suppose that $v'>k^{2}-k+1$ (i.e. the sub-BIBDs are not symmetric) and that there are no intersections of two minimal subdesigns of size $0$. Then $n<b$.
\end{proposition}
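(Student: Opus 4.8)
The plan is to reduce both statements to the single inequality $m<b'$, where $b'=\frac{v'(v'-1)}{k(k-1)}$ is the number of blocks of one subdesign. Indeed, by Corollary~\ref{conl}\ref{conl1} and Lemma~\ref{lebibdeasy}\ref{lebibdeasy3} (with $\lambda=1$) we have $n=m\frac{v(v-1)}{v'(v'-1)}$, $b=\frac{v(v-1)}{k(k-1)}$ and $b'=\frac{v'(v'-1)}{k(k-1)}$, so that $n=\frac{mb}{b'}$ and hence $n<b\iff m<b'$. Throughout I would lean on the universal bound \eqref{mvvp}, $m\le\frac{v-k}{v'-k}$, together with the observation that $\frac{v-k}{v'-k}\le b'$ holds exactly when $v\le\hat v:=k+\frac{v'(v'-1)(v'-k)}{k(k-1)}$, the value of $v$ for which $\frac{\hat v-k}{v'-k}=b'$. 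In other words, everything comes down to controlling the size of $v$.

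For the first statement ($I_1=0$) this is immediate. Since $I_1$ equals $v'$ times the nonnegative per-vertex count in \eqref{lewdmbeasyx}, the hypothesis forces $m\left(\frac{v-1}{v'-1}-\frac{v'-1}{k-1}\right)+\frac{v'-k}{k-1}=0$; as the constant term $\frac{v'-k}{k-1}$ is positive and $m>0$, the coefficient of $m$ must be negative, i.e.\ $v<\frac{(v'-1)^2}{k-1}+1$. Then $v-k<\frac{(v'-1)^2-(k-1)^2}{k-1}=\frac{(v'-k)(v'+k-2)}{k-1}$, so \eqref{mvvp} gives $m<\frac{v'+k-2}{k-1}$. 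Finally $v'(v'-1)-k(v'+k-2)=k^2(k-1)(k-2)>0$ at $v'=k^2-k+1$ and is increasing in $v'$, so for all admissible $v'\ge k^2-k+1$ (Proposition~\ref{prfisher}) and $k\ge3$ one has $\frac{v'+k-2}{k-1}<b'$, whence $m<b'$.

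For the second statement ($I_0=0$) I would argue by contradiction, assuming $m\ge b'$. Setting \eqref{lewdmbeasy0} equal to zero yields the exact value $m=\frac{v'(v'-1)^2(v'-k)}{\phi(v)}$, where $\phi(v):=k(v-1)(v-v'^2)+v'^2(v'-1)^2$. Since $m\ge1$, this forces $\phi(v)\le v'(v'-1)^2(v'-k)$, which rearranges to $(v-1)(v-v'^2)\le-v'(v'-1)^2<0$ and hence $v<v'^2$. Moreover $m\ge b'$ is equivalent to $\phi(v)\le k(k-1)(v'-1)(v'-k)$, while feeding $m\ge b'$ into \eqref{mvvp} gives $v\ge\hat v$. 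If $\hat v\ge v'^2$ this already contradicts $v<v'^2$. Otherwise $v\in[\hat v,v'^2)$, and since the vertex $\frac{v'^2+1}{2}$ of the upward parabola $\phi$ lies below $\hat v$ in this regime, $\phi$ is increasing there, so $\phi(v)\ge\phi(\hat v)$; the non-symmetry hypothesis $v'>k^2-k+1$ is then exactly what makes $\phi(\hat v)>k(k-1)(v'-1)(v'-k)$, contradicting $\phi(v)\le k(k-1)(v'-1)(v'-k)$. Either way $m\ge b'$ is impossible, so $m<b'$ and $n<b$.

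The main obstacle is the final polynomial inequality $\phi(\hat v)>k(k-1)(v'-1)(v'-k)$, together with the auxiliary fact $\hat v>\frac{v'^2+1}{2}$ needed to place $[\hat v,v'^2)$ on the increasing branch of $\phi$. After substituting $\hat v=k+\frac{v'(v'-1)(v'-k)}{k(k-1)}$ both become explicit inequalities in $v'$ and $k$; the expected behaviour is that $\phi(\hat v)-k(k-1)(v'-1)(v'-k)$ vanishes precisely at the symmetric value $v'=k^2-k+1$ (so that $n=b$ there) and is positive for $v'>k^2-k+1$, which is exactly why the statement is phrased with a strict inequality. These checks are elementary but laborious, and they are genuinely needed only in the narrow range $k^2-k+1<v'\le k^2$ where $\hat v<v'^2$; for $v'>k^2$ one has $\hat v\ge v'^2$ and the contradiction is reached from $v<v'^2$ alone.
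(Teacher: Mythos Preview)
Your first part is essentially the paper's argument with slightly different algebra; both deduce $v<\frac{(v'-1)^2}{k-1}+1$ from $I_1=0$, feed this into \eqref{mvvp}, and compare the resulting bound on $m$ to $b'$.

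For the second part ($I_0=0$) you take a genuinely different route. The paper argues \emph{directly}: it splits on whether $v\ge v'(v'-\tfrac{k}{2})$ or not, and in each range bounds $m$ explicitly (by $14$ in the high range after some estimates, by $v'+k-1$ in the low range), then compares to $b'$. The non-symmetry hypothesis enters through the clean structural fact that a non-symmetric $(v',k,1)$-BIBD contains two disjoint blocks, forcing $v'\ge k^2$ and hence $b'\ge v'(1+\tfrac{1}{k})\ge v'+k$. Your approach is a contradiction argument pivoting on the threshold $\hat v$; it is conceptually tidier (the boundary role of $v'=k^2-k+1$ is transparent, since $\phi(\hat v)-k(k-1)(v'-1)(v'-k)$ does indeed vanish there), but it defers the work to two polynomial inequalities in $v',k$ that you leave unverified. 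The second of these is degree~$6$ in $v'$, so ``elementary but laborious'' may undersell it. You could shorten this substantially by borrowing the paper's observation that non-symmetry forces $v'\ge k^2$: combined with your (correct, and checkable via the cubic $v'^3-(k^2+1)v'^2+kv'+k^3-k^2$, which is positive at $v'=k^2+1$ and increasing thereafter) claim that $\hat v\ge v'^2$ for integer $v'>k^2$, your ``narrow range'' collapses to the single value $v'=k^2$, where a direct substitution settles both inequalities at once.
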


\begin{proof}
By \eqref{lewdmbeasy1}, having no minimal sub-BIBD intersection of size $1$ means that:
\begin{equation*}
v'\left(m\left(\frac{v-1}{v'-1}-\frac{v'-1}{k-1}\right)+\frac{v'-k}{k-1}\right)=0
\end{equation*}
In particular, the coefficient of $m$ must be negative, which implies:
\begin{equation*}
v<\frac{(v'-1)^{2}}{k-1}+1
\end{equation*}
Using \eqref{mvvp}, $k\geq 3$ and $b'\geq v'\geq k^{2}-k+1$ (see Proposition~\ref{prfisher}), we get:
\begin{equation*}
m<\frac{\frac{(v'-1)^{2}}{k-1}+1-k}{v'-k}<\frac{(v'-1)^{2}}{(k-1)(v'-k)}\leq\frac{3}{2}\frac{v'-1}{k-1}<v'\leq b'
\end{equation*}
That $m<b'$ is equivalent to $n<b$ is evident by the counting argument for Corollary~\ref{conl}\ref{conl1}.

By \eqref{lewdmbeasy0}, having no minimal sub-BIBD intersection of size $0$ means that:
\begin{equation*}
m=\frac{\frac{(v'-1)(v'-k)}{k}}{\frac{(v-1)(v-v'^{2})}{v'(v'-1)}+\frac{v'(v'-1)}{k}}
\end{equation*}
If $v\geq v'^{2}$, this translates to $m\leq\frac{(v'-1)(v'-k)}{k}\cdot\frac{k}{v'(v'-1)}<1$, which is absurd, therefore we have $v<v'^{2}$; again by \eqref{mvvp} we have then $m<\frac{v'^{2}-k}{v'-k}$. We observe that $(v-1)(v-v'^{2})$ is negative and increasing when $v$ varies in the interval $\left(\frac{v'^{2}+1}{2},v'^{2}\right)$; $v'\left(v'-\frac{k}{2}\right)$ is in that interval, so for $v\in\left[v'\left(v'-\frac{k}{2}\right),v'^{2}\right)$ we have:
\begin{equation*}
m\leq\frac{\frac{(v'-1)(v'-k)}{k}}{\frac{\left(v'^{2}-\frac{kv'}{2}-1\right)\left(v'^{2}-\frac{kv'}{2}-v'^{2}\right)}{v'(v'-1)}+\frac{v'(v'-1)}{k}}=\frac{4(v'-1)^{2}(v'-k)}{4v'^{3}-2(k^{2}+4)v'^{2}+(k^{3}+4)v'+2k^{2}}
\end{equation*}
Since $k\geq 3$ and $v'\geq k^{2}-k+1$ we have $k^{2}+4\leq\frac{13}{7}v'$ and then $m<14$; this proves $m<b'$ except for the pairs $(v',k)=(7,3),(9,3),(13,4)$: these can be verified by hand (in all these cases $m<2$).

When $v\in\left(0,v'\left(v'-\frac{k}{2}\right)\right)$ we use again \eqref{mvvp}:
\begin{equation*}
m<\frac{v'^{2}-\frac{kv'}{2}-k}{v'-k}=v'+k\frac{\frac{v'}{2}-1}{v'-k}
\end{equation*}
Except for those pairs that we have already examined, we have $k\leq\frac{v'}{4}$ and then $m<v'+k\cdot\frac{2}{3}\frac{v'-2}{v'}<v'+k-1$; if we prove that when the sub-BIBD is not symmetric we have $b'\geq v'+k-1$ we are done. In a BIBD with $\lambda=1$ there are only two possible intersection sizes for pairs of blocks, $0$ and $1$: by Proposition~\ref{prfisher} if it is not symmetric it must have both of them, so in particular there exist two disjoint blocks $B_{1},B_{2}$; therefore, for any $x\in B_{1}$, there exist other $k$ blocks (one for each point of $B_{2}$) that intersect each other and $B_{1}$ all in $x$: this means that $v'\geq 1+(k-1)(k+1)=k^{2}$, hence by Lemma~\ref{lebibdeasy}\ref{lebibdeasy3} $b'=\frac{v'(v'-1)}{k(k-1)}\geq v'\left(1+\frac{1}{k}\right)\geq v'+k$.
\end{proof}

The BIBD defined in Examples~\ref{extheone3}-\ref{extheone4} have all sub-BIBDs pairwise intersecting in $k$ vertices, so they fall into the first case of Proposition~\ref{prspecm} (not in the second, because a $(7,3,1)$-BIBD and a $(13,4,1)$-BIBD are both symmetric).

\section{Concluding remarks}

We have investigated BIBDs that have well-distributed minimal sub-BIBDs, exploring what already existing results tell us in this particular case and describing many of their features, with respect to their size and their internal structure. However, many questions are very much open: for a start, Examples~\ref{extheone3}-\ref{extheone4} are the only designs satisfying Definition~\ref{dewdmb} explicitly known to the author. Others potentially emerge from the construction in Example~\ref{exconstr}; also, if we used an identical construction starting from a $(v,k,1)$-BIBD $V$ without any sub-BIBDs (i.e. the minimal sub-BIBD is $V$ itself) we could potentially create many more examples, provided that no new small subdesign is created by accident while pasting the various copies of $V$ as pointed out in that same example. In any case, it would be interesting to produce more examples of designs that satisfy the definition, especially since the BIBDs in Examples~\ref{extheone3}-\ref{extheone4} are rather special cases (they are symmetric designs of symmetric subdesigns); it would be also interesting to see whether there exist BIBDs with well-distributed minimal sub-BIBDs that are at the same time ``generic'', in the sense that all intersection sizes are possible, and ``primitive'', in the sense that they are not obtained by loosely (i.e. with intersection size $0$ or $1$) pasting together smaller examples: having a collection of such examples would give a better idea of what can or cannot be expected from this class of designs.

We have also investigated the relation between BIBDs with well-distributed minimal sub-BIBDs and the graph isomorphism problem. An important question that arose in that context was: can we expect that $n<b$? This fact is true in our explicit examples, and it is also true that if no new subdesigns are accidentally created then the construction in Example~\ref{exconstr} preserves this property (because $n<b\Longleftrightarrow m<b'$ and $m,b'$ remain the same after the pasting process); given the relevance of the question in the search for a faster algorithm to solve the GIP, it is of interest to prove that this is a general fact or conversely to describe examples that show the contrary and see what other characteristics they present.

A little heuristic reasoning might shed some light on the matter. Let us suppose that, for fixed $v,v',k,m$, we consider the entire class of $(v,v',m)$-BIBDs whose blocks pairwise intersect in $0,1,k$ vertices; by Lemma~\ref{leintblock} we know that in each block $D_{0}$ the $k$-size intersections form a $(v',k,m-1)$-BIBD (allowing repetition), and we have already noticed that in the case of BIBDs with well-distributed minimal sub-BIBDs this induced design is actually the union of $m-1$ copies of the same $(v',k,1)$-BIBD. As repetitions are allowed, if $S(v',k,1)$ is the number of BIBDs with those parameters then we also have $S(v',k,m-1)\geq\binom{S(v',k,1)+m-2}{m-1}$ (by taking the union of any $m-1$ of the BIBDs with $\lambda=1$); seeing the BIBDs as the result of pasting together $n$ different sub-BIBDs, the ratio of BIBDs satisfying the definition against the totality of BIBDs with the right parameters, judging by this rough estimate, appears then to be at most $S(v',k,1)^{n}\binom{S(v',k,1)+m-2}{m-1}^{-n}$. Results like \cite{Wi74} suggest that the number of $(v',k,1)$-BIBDs should be of the form $e^{O(v'^{2}\log v')}$, or in any case a large exponential (recent developments \cite{Ke15} \cite{GKLO17} \cite{Ke18} seem to indicate that we could be able to know the answer in the near future); comparing that ratio with $S(v,v',m)$, which would be the number of possible BIBDs with those parameters, we would get that in order to have the possibility of a BIBD satisfying Definition~\ref{dewdmb}:
\begin{equation*}
v^{2}\log v\gtrsim (v'^{2}\log v')(m-2)n\approx m^{2}v^{2}\log v' \ \Longrightarrow \ v\gtrsim v'^{m^{2}}
\end{equation*}
In the case $m\geq b'=\frac{v'(v'-1)}{k(k-1)}$, this would make us expect $v$ to be huge with respect to $v'$.

\section*{Acknowledgements}

The author thanks H. A. Helfgott for introducing him to the graph isomorphism problem and for discussions about his paper \cite{He17} on the subject.

\end{document}